\newcommand{\Kr}{\operatorname{Kr}}
\newcommand{\Na}{\operatorname{Na}}
\newcommand{\Spec}{\operatorname{Spec}}
\newcommand{\QSpec}{\operatorname{QSpec}}
\newcommand{\Max}{\operatorname{Max}}
\newcommand{\QMax}{\operatorname{QMax}}
\newtheorem{thm}{Theorem}[section]
\newtheorem{cor}[thm]{Corollary}
\newtheorem{lem}[thm]{Lemma}
\newtheorem{prop}[thm]{Proposition}
\newtheorem{exam}[thm]{Example}
\newtheorem{rem}[thm]{Remark}
\begin{document}

\bibliographystyle{amsplain}

\date{}

\author{Parviz Sahandi}

\address{Department of Mathematics, University of Tabriz, Tabriz,
Iran, and School of Mathematics, Institute for Research in
Fundamental Sciences (IPM), P.O. Box: 19395-5746, Tehran Iran}
\email{sahandi@tabrizu.ac.ir, sahandi@ipm.ir}

\keywords{semistar operation, Nagata ring, Kronecker function ring,
Pr\"{u}fer domain, graded domain, P$v$MD}

\subjclass[2000]{Primary 13A15, 13G05, 13A02}

\thanks{Parviz Sahandi was in part supported by a grant from IPM (No.
91130030)}

\title[Pr\"{u}fer $\star$-multiplication domains]{Characterizations of graded Pr\"{u}fer $\star$-multiplication domains}

\begin{abstract} Let $R=\bigoplus_{\alpha\in\Gamma}R_{\alpha}$ be a graded
integral domain graded by an arbitrary grading torsionless monoid
$\Gamma$, and $\star$ be a semistar operation on $R$. In this paper
we define and study the graded integral domain analogue of
$\star$-Nagata and Kronecker function rings of $R$ with respect to
$\star$. We say that $R$ is a graded Pr\"{u}fer
$\star$-multiplication domain if each nonzero finitely generated
homogeneous ideal of $R$ is $\star_f$-invertible. Using
$\star$-Nagata and Kronecker function rings, we give several
different equivalent conditions for $R$ to be a graded Pr\"{u}fer
$\star$-multiplication domain. In particular we give new
characterizations for a graded integral domain, to be a P$v$MD.
\end{abstract}

\maketitle

\section{Introduction}

Let $R=\bigoplus_{\alpha\in\Gamma}R_{\alpha}$ be a graded
(commutative) integral domain graded by an arbitrary grading
torsionless monoid $\Gamma$, that is $\Gamma$ is a commutative
cancellative monoid (written additively). Let
$\langle\Gamma\rangle=\{a-b|a,b\in\Gamma\},$ be the quotient group
of $\Gamma$, which is a torsionfree abelian group.

Let $H$ be the saturated multiplicative set of nonzero homogeneous
elements of $R$. Then
$R_H=\bigoplus_{\alpha\in\langle\Gamma\rangle}(R_H)_{\alpha}$,
called the \emph{homogeneous quotient field of $R$}, is a graded
integral domain whose nonzero homogeneous elements are units. For a
fractional ideal $I$ of $R$ let $I_h$ denote the fractional ideal
generated by the set of homogeneous elements of $R$ in $I$. It is
known that if $I$ is a prime ideal, then $I_h$ is also a prime ideal
(cf. \cite[Page 124]{North}). An integral ideal $I$ of $R$ is said
to be homogeneous if $I=\bigoplus_{\alpha\in\Gamma}(I\cap
R_{\alpha})$; equivalently, if $I=I_h$. A fractional ideal $I$ of
$R$ is \emph{homogeneous} if $sI$ is an integral homogeneous ideal
of $R$ for some $s\in H$ (thus $I\subseteq R_H$). For $f\in R_H$,
let $C_R(f)$ (or simply $C(f)$) denote the fractional ideal of $R$
generated by the homogeneous components of $f$. For a fractional
ideal $I$ of $R$ with $I\subseteq R_H$, let $C(I)=\sum_{f\in
I}C(f)$. For more on graded integral domains and their divisibility
properties, see \cite{AA2, North}.

Let $R=\bigoplus_{\alpha\in\Gamma}R_{\alpha}$ and $N_v(H)=\{f\in
R|C(f)^v=R\}$. (Definitions related to the $v$-operation will be
reviewed in the sequel.) Then $N_v(H)$ is a saturated multiplicative
subset of $R$ by \cite[Lemma 1.1(2)]{AC}. The graded integral domain
analogue of the well known Nagata ring is the ring $R_{N_v(H)}$. In
\cite{AC}, Anderson and Chang, studied relationships between the
ideal-theoretic properties of $R_{N_v(H)}$ and the homogeneous
ideal-theoretic properties of $R$. For example it is shown that if
$R$ has a unit of nonzero degree, $Pic(R_{N_v(H)})=0$ and that $R$
is a P$v$MD if and only if each ideal of $R_{N_v(H)}$ is extended
from a homogeneous ideal of $R$, if and only if $R_{N_v(H)}$ is a
Pr\"{u}fer (or B\'{e}zout) domain \cite[Theorems 3.3 and 3.4]{AC}.
Also, they generalized the notion of Kronecker function ring, (for
\texttt{e.a.b.} star operations on $R$) and then showed that this
ring is a B\'{e}zout domain \cite[Theorem 3.5]{AC}. For the
definition and properties of semistar-Nagata and Kronecker function
rings of an integral domain see the interesting survey article
\cite{FL3}. Recall that the \emph{Picard group (or the ideal class
group)} of an integral domain $D$, is $Pic(D)=Inv(D)/Prin(D)$, where
$Inv(D)$ is the multiplicative group of invertible fractional ideals
of $D$, and $Prin(D)$ is the subgroup of principal fractional ideal
of $D$.

Let $R=\bigoplus_{\alpha\in\Gamma}R_{\alpha}$ be an integral domain,
and $\star$ be a semistar operation on $R$. In Section 2 of this
paper we study the homogeneous elements of $\QSpec^{\star}(R)$
denoted by $h$-$\QSpec^{\star}(R)$. We show that if $\star$ is a
finite type semistar operation on $R$ which sends homogeneous
fractional ideals to homogeneous ones, and such that
$R^{\star}\subsetneq R_H$, then each homogeneous quasi-$\star$-ideal
of $R$, is contained in a homogeneous quasi-$\star$-prime ideal of
$R$. One of key results in this paper is Proposition \ref{pp}, which
shows that if $R^{\star}\subsetneq R_H$, the $\widetilde{\star}$
sends homogeneous fractional ideals to homogeneous ones. We also
define and study the Nagata ring of $R$ with respect to $\star$. The
$\star$-Nagata ring is defined by the quotient ring
$R_{N_{\star}(H)}$, where $N_{\star}(H)=\{f\in
R|C(f)^{\star}=R^{\star}\}$. Among other things, it is shown that
$Pic(R_{N_{\star}(H)})=0$. In Section 3 we define and study the
Kronecker function ring of $R$ with respect to $\star$. The
Kronecker function ring, inspired by \cite[Theorem 5.1]{FL2}, is
defined by $\Kr(R,\star):=\{0\}\cup\{f/g|0\neq f,g\in R,\text{ and
there is }0\neq h\in R\text{ such that }
C(f)C(h)\subseteq(C(g)C(h))^{\star} \}$. It is shown that if $\star$
sends homogeneous fractional ideals to fractional ones, then
$\Kr(R,\star)$ is a B\'{e}zout domain. In Section 3 we define the
notion of graded Pr\"{u}fer $\star$-multiplication domains and give
several different equivalent conditions to be a graded P$\star$MD. A
graded integral domain $R$, is called a \emph{graded Pr\"{u}fer
$\star$-multiplication domain (graded P$\star$MD)} if every finitely
generated homogeneous ideal of $R$ is a $\star_f$-invertible, i.e.,
$(II^{-1})^{\star_f}=R^{\star}$ for each finitely generated
homogeneous ideal $I$ of $R$. Among other results we show that $R$
is a graded P$\star$MD if and only if $R_{N_{\star}(H)}$ is a
Pr\"{u}fer domain if and only if $R_{N_{\star}(H)}$ is a B\'{e}zout
domain if and only if $R_{N_{\star}(H)}=\Kr(R,\widetilde{\star})$ if
and only if $\Kr(R,\widetilde{\star})$ is a flat $R$-module.

To facilitate the reading of the paper, we review some basic facts
on semistar operations. Let $D$ be an integral domain with quotient
field $K$. Let $\overline{\mathcal{F}}(D)$ denote the set of all
nonzero $D$-submodules of $K$. Let $\mathcal{F}(D)$ be the set of
all nonzero \emph{fractional} ideals of $D$; i.e.,
$E\in\mathcal{F}(D)$ if $E\in\overline{\mathcal{F}}(D)$ and there
exists a nonzero element $r\in D$ with $rE\subseteq D$. Let $f(D)$
be the set of all nonzero finitely generated fractional ideals of
$D$. Obviously,
$f(D)\subseteq\mathcal{F}(D)\subseteq\overline{\mathcal{F}}(D)$. As
in \cite{OM}, a {\it semistar operation on} $D$ is a map
$\star:\overline{\mathcal{F}}(D)\rightarrow\overline{\mathcal{F}}(D)$,
$E\mapsto E^{\star}$, such that, for all $x\in K$, $x\neq 0$, and
for all $E, F\in\overline{\mathcal{F}}(D)$, the following three
properties hold:
\begin{itemize}
\item [$\star_1$]:  $(xE)^{\star}=xE^{\star}$;
\item [$\star_2$]:  $E\subseteq F$ implies that $E^{\star}\subseteq
F^{\star}$;
\item [$\star_3$]:  $E\subseteq E^{\star}$ and
$E^{\star\star}:=(E^{\star})^{\star}=E^{\star}$.
\end{itemize}

Let $\star$ be a semistar operation on the domain $D$. For every
$E\in\overline{\mathcal{F}}(D)$, put $E^{\star_f}:=\cup F^{\star}$,
where the union is taken over all finitely generated $F\in f(D)$
with $F\subseteq E$. It is easy to see that $\star_f$ is a semistar
operation on $D$, and ${\star_f}$ is called \emph{the semistar
operation of finite type associated to} $\star$. Note that
$(\star_f)_f=\star_f$. A semistar operation $\star$ is said to be of
\emph{finite type} if $\star=\star_f$; in particular ${\star_f}$ is
of finite type. We say that a nonzero ideal $I$ of $D$ is a
\emph{quasi-$\star$-ideal} of $D$, if $I^{\star}\cap D=I$; a
\emph{quasi-$\star$-prime} (ideal of $D$), if $I$ is a prime
quasi-$\star$-ideal of $D$; and a \emph{quasi-$\star$-maximal}
(ideal of $D$), if $I$ is maximal in the set of all proper
quasi-$\star$-ideals of $D$. Each quasi-$\star$-maximal ideal is a
prime ideal. It was shown in \cite[Lemma 4.20]{FH} that if
$D^{\star} \neq K$, then each proper quasi-$\star_f$-ideal of $D$ is
contained in a quasi-$\star_f$-maximal ideal of $D$. We denote by
$\QMax^{\star}(D)$ (resp., $\QSpec^{\star}(D)$) the set of all
quasi-$\star$-maximal ideals (resp., quasi-$\star$-prime ideals) of
$D$.

If $\star_1$ and $\star_2$ are semistar operations on $D$, one says
that $\star_1\leq\star_2$ if $E^{\star_1}\subseteq E^{\star_2}$ for
each $E\in\overline{\mathcal{F}}(D)$ (cf. \cite[page 6]{OM}). This
is equivalent to saying that
$(E^{\star_1})^{\star_2}=E^{\star_2}=(E^{\star_2})^{\star_1}$ for
each $E\in\overline{\mathcal{F}}(D)$ (cf. \cite[Lemma 16]{OM}).
Obviously, for each semistar operation $\star$ defined on $D$, we
have $\star_f\leq\star$. Let $d_D$ (or, simply, $d$) denote the
identity (semi)star operation on $D$. Clearly, $d_D\leq\star$ for
all semistar operations $\star$ on $D$.

It has become standard to say that a semistar operation $\star$ is
{\it stable} if $(E\cap F)^{\star}=E^{\star}\cap F^{\star}$ for all
$E$, $F\in \overline{\mathcal{F}}(D)$. (``Stable" has replaced the
earlier usage, ``quotient", in \cite[Definition 21]{OM}.) Given a
semistar operation $\star$ on $D$, it is possible to construct a
semistar operation $\widetilde{\star}$, which is stable and of
finite type defined as follows: for each
$E\in\overline{\mathcal{F}}(D)$,
$$
E^{\widetilde{\star}}:=\{x\in K|xJ\subseteq E,\text{ for some
}J\subseteq R, J\in f(R), J^{\star}=D^{\star}\}.
$$

It is well known that \cite[Corollary 2.7]{FH}
$$E^{\widetilde{\star}}:= \cap\{ED_P|P\in\QMax^{\star_f}(D)\}\text{, for each
}E\in\overline{\mathcal{F}}(D).$$

The most widely studied (semi)star operations on $D$ have been the
identity $d$, $v$, $t:=v_f$, and $w:=\widetilde{v}$ operations,
where $A^{v}:=(A^{-1})^{-1}$, with $A^{-1}:=(R:A):=\{x\in
K|xA\subseteq D\}$.

Let $\star$ be a semistar operation on an integral domain $D$. We
say that $\star$ is an \emph{\texttt{e.a.b.} (endlich arithmetisch
brauchbar) semistar operation} of $D$ if, for all $E, F, G\in f(D)$,
$(EF)^{\star}\subseteq(EG)^{\star}$ implies that $F^{\star}\subseteq
G^{\star}$ (\cite[Definition 2.3 and Lemma 2.7]{FL2}). We can
associate  to any semistar operation $\star$ on $D$, an
\texttt{e.a.b.} semistar operation of finite type $\star_a$ on $D$,
called the \emph{\texttt{e.a.b.} semistar operation associated to
$\star$}, defined as follows for each $F\in f(D)$ and for each $E\in
\overline{F}(D)$:
\begin{align*}
F^{\star_a}:=&\bigcup\{((FH)^{\star}:H^{\star})| H\in f(R)\},\\[1ex]
E^{\star_a}:=&\bigcup\{F^{\star_a}| F\subseteq E, F\in f(R)\}
\end{align*}
\cite[Definition 4.4 and Proposition 4.5]{FL2} (note that
$((FH)^{\star}:H^{\star})=((FH)^{\star}:H)$). It is known that
$\star_f\leq\star_a$ \cite[Proposition 4.5(3)]{FL2}. Obviously
$(\star_f)_a=\star_a$. Moreover, when $\star=\star_f$, then $\star$
is \texttt{e.a.b.} if and only if $\star=\star_a$ \cite[Proposition
4.5(5)]{FL2}.

Let $\star$ be a semistar operation on a domain $D$. Recall from
\cite{FJS} that, $D$ is called a \emph{Pr\"{u}fer
$\star$-multiplication domain} (for short, a P$\star$MD) if each
finitely generated ideal of $D$ is \emph{$\star_f$-invertible};
i.e., if $(II^{-1})^{\star_f}=D^{\star}$ for all $I\in f(D)$. When
$\star=v$, we recover the classical notion of P$v$MD; when
$\star=d_D$, the identity (semi)star operation, we recover the
notion of Pr\"{u}fer domain.

\section{Nagata ring}

Let $R=\bigoplus_{\alpha\in\Gamma}R_{\alpha}$ be a graded integral
domain, $\star$ be a semistar operation on $R$, $H$ be the set of
nonzero homogeneous elements of $R$. An overring $T$ of $R$, with
$R\subseteq T\subseteq R_H$ will be called a \emph{homogeneous
overring} if $T=\bigoplus_{\alpha\in\langle\Gamma\rangle}(T\cap
(R_H)_{\alpha})$. Thus $T$ is a graded integral domain with
$T_{\alpha}=T\cap (R_H)_{\alpha}$.

In this section we study the homogeneous elements of
$\QSpec^{\star}(R)$, denoted by $h$-$\QSpec^{\star}(R)$, and the
graded integral domain analogue of $\star$-Nagata ring. Let
$h$-$\QMax^{\star}(R)$ denote the set of ideals of $R$ which are
maximal in the set of all proper homogeneous quasi-$\star$-ideals of
$R$. The following lemma shows that, if $R^{\star}\subsetneq R_H$
and $\star=\star_f$ sends homogeneous fractional ideals to
homogeneous ones, then $h$-$\QMax^{\star_f}(R)$ is nonempty and each
proper homogeneous quasi-$\star_f$-ideal is contained in a maximal
homogeneous quasi-$\star_f$-ideal.

\begin{lem}\label{l} Let $R=\bigoplus_{\alpha\in\Gamma}R_{\alpha}$ be a
graded integral domain, $\star$ a finite type semistar operation on
$R$ which sends homogeneous fractional ideals to homogeneous ones,
and such that $R^{\star}\subsetneq R_H$. If $I$ is a proper
homogeneous quasi-$\star$-ideal of $R$, then $I$ is contained in a
proper homogeneous quasi-$\star$-prime ideal.
\end{lem}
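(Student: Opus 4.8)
The plan is to mimic the proof of \cite[Lemma 4.20]{FH} in the homogeneous setting: first produce, via Zorn's Lemma, a homogeneous quasi-$\star$-ideal $M$ that is maximal among the proper homogeneous quasi-$\star$-ideals containing $I$, and then show that any such $M$ is prime. Since $M$ will be homogeneous and $\langle\Gamma\rangle$ is torsion-free (hence admits a total order compatible with the group law), it suffices to verify primality on homogeneous elements, and the resulting $M$ is exactly the desired homogeneous quasi-$\star$-prime containing $I$.

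First I would let $\Sigma$ be the set of all proper homogeneous quasi-$\star$-ideals of $R$ containing $I$, ordered by inclusion. It is nonempty since $I\in\Sigma$. Given a chain $\{I_\lambda\}$ in $\Sigma$, put $J=\bigcup_\lambda I_\lambda$; this is homogeneous (a union of homogeneous ideals) and proper, since $1\notin I_\lambda$ for every $\lambda$ forces $1\notin J$. To see that $J$ is again a quasi-$\star$-ideal I would use that $\star=\star_f$ is of finite type: if $x\in J^{\star}\cap R$, then $x\in F^{\star}$ for some finitely generated $F\subseteq J$, and since $F$ is finitely generated and the $I_\lambda$ form a chain, $F\subseteq I_{\lambda_0}$ for some $\lambda_0$; hence $x\in F^{\star}\cap R\subseteq I_{\lambda_0}^{\star}\cap R=I_{\lambda_0}\subseteq J$. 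Thus $J^{\star}\cap R=J$, so $J\in\Sigma$ is an upper bound of the chain, and Zorn's Lemma yields a maximal element $M$ of $\Sigma$.

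It remains to prove that $M$ is prime. Let $a,b\in R$ be homogeneous with $ab\in M$, and suppose $a\notin M$; I must show $b\in M$. Consider $N:=(M+aR)^{\star}\cap R$. Since $a$ is homogeneous, $M+aR$ is a homogeneous fractional ideal, so by hypothesis $(M+aR)^{\star}$ is homogeneous and hence so is $N$; moreover $N$ is a quasi-$\star$-ideal, because $N\subseteq(M+aR)^{\star}$ gives $N^{\star}\subseteq(M+aR)^{\star\star}=(M+aR)^{\star}$ by $\star_2$ and $\star_3$, whence $N^{\star}\cap R=N$. As $a\in N\setminus M$, $N$ properly contains $M$, and $N$ contains $I$; by maximality of $M$ in $\Sigma$ the ideal $N$ cannot be proper, i.e. $1\in(M+aR)^{\star}$. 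If we also had $b\notin M$, the same argument gives $1\in(M+bR)^{\star}$. Using the standard inequality $E^{\star}F^{\star}\subseteq(EF)^{\star}$ (a consequence of $\star_1$ and $\star_2$) together with $(M+aR)(M+bR)\subseteq M^{2}+aM+bM+abR\subseteq M$, I then obtain
$$1\in(M+aR)^{\star}(M+bR)^{\star}\subseteq\big((M+aR)(M+bR)\big)^{\star}\subseteq M^{\star},$$
so $1\in M^{\star}\cap R=M$, contradicting the properness of $M$. Hence $b\in M$, which shows that $M$ is homogeneously prime, and therefore prime.

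The routine points are the Zorn argument and the inclusion $E^{\star}F^{\star}\subseteq(EF)^{\star}$. The step I expect to require the most care is ensuring that homogeneity is preserved throughout: this is precisely where the hypothesis that $\star$ sends homogeneous fractional ideals to homogeneous ones enters, guaranteeing that $N$ is homogeneous and therefore eligible to contradict the maximality of $M$ inside $\Sigma$; it is also where one needs the reduction of primality to homogeneous elements, which is legitimate because $\langle\Gamma\rangle$ is torsion-free and hence totally orderable. The assumption $R^{\star}\subsetneq R_H$ plays the same nondegeneracy role here as $D^{\star}\neq K$ does in \cite[Lemma 4.20]{FH}, ensuring that proper homogeneous quasi-$\star$-ideals exist and that the maximal element produced is genuinely proper.
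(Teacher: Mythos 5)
Your proof is correct and follows essentially the same route as the paper's: Zorn's Lemma applied to the family of proper homogeneous quasi-$\star$-ideals (inductive precisely because $\star$ has finite type), followed by the observation that a maximal element $M$ is prime since for homogeneous $a\notin M$ maximality and the homogeneity hypothesis force $(M+aR)^{\star}\cap R=R$. The only difference is in the final computation: where you finish symmetrically via $1\in(M+aR)^{\star}(M+bR)^{\star}\subseteq\big((M+aR)(M+bR)\big)^{\star}\subseteq M^{\star}$, the paper multiplies $(M+aR)^{\star}=R^{\star}$ by the single element $b$ to get $b\in b(M+aR)^{\star}\cap R\subseteq M^{\star}\cap R=M$ directly; both finishes are standard and valid.
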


\begin{proof} Let $X:=\{I|I$ is a homogeneous quasi-$\star$-ideal of
$R\}$. Then it is easy to see that $X$ is nonempty. Indeed, in this
case $R^{\star}$ is a homogeneous overring of $R$, and if $u\in H$
is a nonunit in $R^{\star}$, then $uR^{\star}\cap R$ is a proper
homogeneous quasi-$\star$-ideal of $R$. Also $X$ is inductive (see
proof of \cite[Lemma 4.20]{FH}). From Zorn's Lemma, we see that
every proper homogeneous quasi-$\star$-ideal of $R$ is contained in
some maximal element $Q$ of $X$.

Now we show that $Q$ is actually prime. Take $f,g\in H\backslash Q$
and suppose that $fg\in Q$. By the maximality of $Q$ we have
$(Q,f)^{\star}=R^{\star}$ (note that $(Q,f)^{\star}\cap R$ is a
homogeneous quasi-$\star$-ideal of $R$ and properly contains $Q$).
Since $\star$ is of finite type, we can find a finitely generated
ideal $J\subseteq Q$ such that $(J,f)^{\star}=R^{\star}$. Then $g\in
gR^{\star}\cap R=g(J,f)^{\star}\cap R\subseteq Q^{\star}\cap R=Q$ a
contradiction. Thus $Q$ is a prime ideal.
\end{proof}

The following example shows that we can not drop the condition that,
$\star$ sends homogeneous fractional ideals to homogeneous ones, in
the above lemma.

\begin{exam}\label{e} Let $k$ be a field and $X,Y$ be indeterminates over $k$. Let $R=k[X,Y]$,
which is a ($\mathbb{N}_0$-)graded Noetherian integral domain with
$\deg X=\deg Y=1$. Set $M:=(X,Y+1)$ which is a maximal
non-homogeneous ideal of $R$. Let $T$ be a DVR \cite{Chev}, with
maximal ideal $N$, dominating the local ring $R_M$. If $R_H\subseteq
T$, then there exists a prime ideal $P$ of $R$ such that, $P\cap
H=\emptyset$ and $N\cap R_H=PR_H$. Thus $M=N\cap R=N\cap R_H\cap
R=PR_H\cap R=P$. Hence $M\cap H=\emptyset$, which is a
contradiction, since $X\in M\cap H$. So that, $R_H\nsubseteq T$. Let
$\star$ be a semistar operation on $R$ defined by $E^{\star}=ET\cap
ER_H$ for each $E\in\overline{\mathcal{F}}(R)$. Then clearly
$\star=\star_f$ and $R^{\star}\subsetneq R_H$. If $P$ is a nonzero
prime ideal of $R$, such that $P\cap H=\emptyset$, then
$P^{\star_f}\cap R= PT\cap PR_H\cap R=PT\cap P=P$. Thus $P$ is a
quasi-$\star_f$-prime ideal. On the other hand if $P$ is any nonzero
prime ideal of $R$ such that $P\cap H\neq\emptyset$, then $PT=N^k$,
for some integer $k\geq1$. Therefore, if we assume that $P$ is a
quasi-$\star_f$-ideal of $R$, then we would have $P=PT\cap PR_H\cap
R=PT\cap R=N^k\cap R\supseteq M^k$, which implies that $P=M$. Thus
$\QSpec^{\star_f}(R)=\{M\}\cup\{P\in\Spec(R)|P\neq0$ and $P\cap
H=\emptyset\}$. Therefore by \cite[Lemma 4.1, Remark 4.5]{FH}, we
have $\QSpec^{\widetilde{\star}}(R)=\{Q\in\Spec(R)|0\neq Q\subseteq
M\}\cup \{P\in\Spec(R)|P\neq0$ and $P\cap H=\emptyset\}$. Hence in
the present example we have
$h$-$\QSpec^{\star_f}(R)=h$-$\QMax^{\star_f}(R)=\emptyset$, and
$h$-$\QSpec^{\widetilde{\star}}(R)=h$-$\QMax^{\widetilde{\star}}(R)=\{(X)\}$.
Note that in this example
$h$-$\QMax^{\widetilde{\star}}(R)\nsubseteq\QMax^{\widetilde{\star}}(R)=\QMax^{\star_f}(R)$.
\end{exam}

From now on in this paper, we are interested and consider, the
semistar operations $\star$ on $R$, such that $R^{\star}\subsetneq
R_H$ and sends homogeneous fractional ideals to homogeneous ones.
For any such semistar operation, if $I$ is a homogeneous ideal of
$R$, we have $I^{\star_f}=R^{\star}$ if and only if $I\nsubseteq Q$
for each $Q\in h$-$\QMax^{\star_f}(R)$. Also if $P$ is a
quasi-$\star$-prime ideal of $R$, then either $P_h=0$ or $P_h$ is a
quasi-$\star$-prime ideal of $R$. Indeed, if $P_h\neq0$, then
$P_h\subseteq (P_h)^{\star}\cap R\subseteq P^{\star}\cap R=P$, which
implies that $P_h=(P_h)^{\star}\cap R$, since $(P_h)^{\star}\cap R$
is a homogeneous ideal.

The following proposition is the key result in this paper.

\begin{prop}\label{pp} Let $R=\bigoplus_{\alpha\in\Gamma}R_{\alpha}$ be a
graded integral domain, and $\star$ be a semistar operation on $R$
such that $R^{\star}\subsetneq R_H$. Then, $\widetilde{\star}$ sends
homogeneous fractional ideals to homogeneous ones. In particular
$h$-$\QMax^{\widetilde{\star}}(R)\neq\emptyset$, and
$R^{\widetilde{\star}}$ is a homogeneous overring of $R$.
\end{prop}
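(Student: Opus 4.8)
The plan is to reduce everything to a single statement about elements: for a homogeneous fractional ideal $E$ and $x\in E^{\widetilde\star}$, the element $x$ lies in $R_H$ and each homogeneous component of $x$ again lies in $E^{\widetilde\star}$. Granting this, $E^{\widetilde\star}$ is a graded $R$-submodule of $R_H$, hence a homogeneous fractional ideal, which is exactly what is asserted. First I would rescale by a suitable $s\in H$ to assume $E$ is an integral homogeneous ideal. Unwinding the definition of $\widetilde\star$, the hypothesis $x\in E^{\widetilde\star}$ gives a finitely generated ideal $J=(a_1,\dots,a_n)$ of $R$ with $J^\star=R^\star$ and $xa_i\in E$ for all $i$. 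Since $R\subseteq R^\star\subsetneq R_H$, the finitely generated homogeneous ideal $C(J)=\sum_iC(a_i)$ satisfies $J\subseteq C(J)\subseteq R^\star$, so $C(J)^\star=R^\star$; in particular $C(J)$ contains a nonzero homogeneous element, which is automatically a unit of the graded field $R_H$.

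The device I would use is a total order $\leq$ on the torsionfree abelian group $\langle\Gamma\rangle$ compatible with its addition (such an order exists, as $\langle\Gamma\rangle$ is torsionfree abelian). It equips every nonzero element of $R_H$ with a well-defined leading (top-degree) homogeneous component, the leading component of a product being the product of the leading components. Writing $D:=(E:_R x)=\{r\in R\mid rx\in E\}$, an ideal containing $J$, let $\operatorname{in}(D)$ be the homogeneous ideal generated by the leading components of all elements of $D$. Assuming $x\in R_H$, comparison of top-degree terms in the memberships $rx\in E$ (for $r\in D$, using that $E$ is homogeneous) shows that, with $\mu$ the top degree of $x=\sum_\lambda x_\lambda$, one has $r^{\mathrm{top}}\in(E:_R x_\mu)$ for every $r\in D$; hence the homogeneous ideal $(E:_R x_\mu)$ contains $\operatorname{in}(D)\supseteq\operatorname{in}(J)$. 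Thus the whole argument comes down to producing, inside $\operatorname{in}(J)$, a finitely generated homogeneous ideal with $\star$-closure equal to $R^\star$, together with the fact that $x\in R_H$ in the first place.

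This last reduction is exactly where I expect the real difficulty. The naive hope that the ideal generated by the leading components $a_i^{\mathrm{top}}$ of the \emph{generators} is $\star$-dense is false: these leading forms may share a spurious common factor (for instance the leading forms of $X^2+Y$ and $X^2+Z$ are both $X^2$), so one is forced to work with the full initial ideal $\operatorname{in}(J)$, which also contains leading terms of combinations such as $(X^2+Y)-(X^2+Z)=Y-Z$ and is genuinely larger. The crux is therefore the implication ``$J^\star=R^\star\Rightarrow$ some finitely generated homogeneous $B_0\subseteq\operatorname{in}(J)$ has $B_0^\star=R^\star$'', and, in tandem, the assertion $x\in R_H$ (equivalently $(R_H:_K J)=R_H$). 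I would attack both through the ordered-group leading-term calculus, comparing $\operatorname{in}(J)$ with the content ideal $C(J)$ (for which $C(J)^\star=R^\star$ is already known) and degenerating the relevant data degree by degree so that $\star$-density survives passage to leading components. This is the step I expect to absorb the bulk of the work, since $\star$ is only an abstract semistar operation and the passage to initial ideals must be shown to preserve $\star$-density.

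Granting the crux, the proof finishes cleanly. The argument above yields $x_\mu\in E^{\widetilde\star}$; since $E^{\widetilde\star}$ is an $R$-module, $x-x_\mu\in E^{\widetilde\star}$ has strictly fewer homogeneous components, and induction on the number of components shows every $x_\lambda\in E^{\widetilde\star}$, so $E^{\widetilde\star}$ is homogeneous. Applying this to $E=R$ shows that $R^{\widetilde\star}$ is homogeneous, and since $R\subseteq R^{\widetilde\star}\subseteq R^\star\subsetneq R_H$ it is a homogeneous overring of $R$. Finally $\widetilde\star$ is of finite type, sends homogeneous fractional ideals to homogeneous ones, and satisfies $R^{\widetilde\star}\subsetneq R_H$, so Lemma \ref{l} applies to $\widetilde\star$ and yields $h\text{-}\QMax^{\widetilde\star}(R)\neq\emptyset$.
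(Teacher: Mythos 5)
Your write-up is not a proof but a plan whose decisive step is left open: everything is funneled into what you yourself call the crux, namely that $\operatorname{in}(J)$ (or $\operatorname{in}(D)$) contains a finitely generated ideal $B_0$ with $B_0^{\star}=R^{\star}$, and you only describe how you ``would attack'' it. That step \emph{is} the proposition; the outer reductions (rescaling, passing to the top component, induction on the number of components) are routine. Moreover, there is a concrete reason to doubt the initial-ideal route as described: a semistar operation satisfies only $\star_1$--$\star_3$, so it respects identities of ideal multiplication, but it has no compatibility whatsoever with the degeneration from $J$ to its initial ideal $\operatorname{in}(J)$. Already for a principal ideal $J=(h)$ one has $\operatorname{in}(J)=(\operatorname{in}(h))$, and no power of the content $C(h)$ is contained in $(\operatorname{in}(h))$: for $h=X^2+Y$ in $k[X,Y]$, $Y^k\in C(h)^k$ but $Y^k\notin (X^2)$. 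So $\star$-density of $C(J)$ --- the only density you actually have in hand --- cannot be transported to $\operatorname{in}(J)$ by any ideal-theoretic containment, and any proof of your crux would have to use the hypothesis $R^{\star}\subsetneq R_H$ in an essential way you never specify. (By contrast, the other half of your crux, $x\in R_H$, needs no work at all: if $sE\subseteq R$ with $s\in H$, then $sE^{\widetilde{\star}}\subseteq sE^{\star}=(sE)^{\star}\subseteq R^{\star}\subsetneq R_H$, so $E^{\widetilde{\star}}\subseteq s^{-1}R_H=R_H$.)

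The paper's proof avoids leading forms entirely and uses instead the graded Dedekind--Mertens content formula \cite[Lemma 1.1(1)]{AC}, which is exactly the device that makes $\star$-density transfer. Write $J=(g_1,\dots,g_n)$ with $J^{\star}=R^{\star}$ and $fJ\subseteq E$. The formula gives $m\geq 1$ with $C(g_i)^{m+1}C(f)=C(g_i)^{m}C(fg_i)$ for all $i$; since $fg_i\in E$ and $E$ is homogeneous, $C(fg_i)\subseteq E$, hence $C(f)\,C(g_i)^{m+1}\subseteq E$. Setting $J_0:=C(g_1)^{m+1}+\cdots+C(g_n)^{m+1}$, a finitely generated \emph{homogeneous} ideal, one has $J_0^{\star}=R^{\star}$ by an honest containment (pigeonhole gives $J^{nm+1}\subseteq(g_1^{m+1},\dots,g_n^{m+1})\subseteq J_0$, and $(J^{nm+1})^{\star}=R^{\star}$), and $C(f)J_0\subseteq E$ forces $C(f)\subseteq E^{\widetilde{\star}}$: all homogeneous components of $f$ land in $E^{\widetilde{\star}}$ at once, with no choice of total order and no induction. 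In short, the missing idea is to replace leading forms of the $g_i$ by their \emph{contents} and invoke the content formula; with that substitution your surrounding reductions, and your derivation of the ``in particular'' statements from Lemma \ref{l}, go through and agree with the paper.
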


\begin{proof} Let $E$ be a homogenous fractional ideal of $R$.
To show that $E^{\widetilde{\star}}$ is homogeneous let $f\in
E^{\widetilde{\star}}$. Then $fJ\subseteq E$ for some finitely
generated ideal $J$ of $R$ such that $J^{\star}=R^{\star}$. Suppose
that $J=(g_1,\cdots,g_n)$. Using \cite[Lemma 1.1(1)]{AC}, there is
an integer $m\geq1$ such that $C(g_i)^{m+1}C(f)=C(g_i)^mC(fg_i)$ for
all $i=1,\cdots,n$. Since $E$ is a homogeneous fractional ideal and
$fg_i\in E$, we have $C(fg_i)\subseteq E$. Thus we have
$C(g_i)^{m+1}C(f)\subseteq E$. Let
$J_0:=C(g_1)^{m+1}+\cdots+C(g_n)^{m+1}$. Thus $J_0$ is a finitely
generated homogeneous ideal of $R$ such that
$J^{\star}_0=R^{\star}$. Since $C(f)J_0\subseteq E$, $C(f)\subseteq
E^{\widetilde{\star}}$. Therefore $E^{\widetilde{\star}}$ is a
homogeneous ideal.
\end{proof}

\begin{lem}\label{homof} Let $R=\bigoplus_{\alpha\in\Gamma}R_{\alpha}$ be a
graded integral domain, $\star$ a semistar operation on $R$ which
sends homogeneous fractional ideals to homogeneous ones. Then
$\star_f$ sends homogeneous fractional ideals to homogeneous ones.
\end{lem}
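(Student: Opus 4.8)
The plan is to compute $E^{\star_f}$ using only \emph{homogeneous} finitely generated subideals, so that the hypothesis on $\star$ can be applied directly. Fix a homogeneous fractional ideal $E$ of $R$, so $E\subseteq R_H$ and $E$ is closed under taking homogeneous components. First I would unwind the definition $E^{\star_f}=\bigcup F^{\star}$, the union running over all finitely generated $F\in f(R)$ with $F\subseteq E$: given $f\in E^{\star_f}$, there is such an $F=(a_1,\dots,a_n)\subseteq E$ with $f\in F^{\star}$. The generators $a_i$ need not be homogeneous, so $F^{\star}$ is not \emph{a priori} homogeneous; this is the one genuine point of the argument, and I would get around it by enlarging $F$ to a homogeneous ideal without leaving $E$.

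Concretely, I would let $F_0$ be the ideal generated by all the nonzero homogeneous components of $a_1,\dots,a_n$. Each $a_i$ has only finitely many nonzero homogeneous components, so $F_0$ is finitely generated and homogeneous. Since $E$ is homogeneous and $a_i\in E$, every homogeneous component of $a_i$ again lies in $E$, whence $F_0\subseteq E$; and since each $a_i$ is the sum of its homogeneous components, $F\subseteq F_0$, and therefore $f\in F^{\star}\subseteq F_0^{\star}$.

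Finally I would apply the hypothesis: $F_0$ being a homogeneous finitely generated fractional ideal, $F_0^{\star}$ is homogeneous, so every homogeneous component of $f$ lies in $F_0^{\star}\subseteq E^{\star_f}$. As $f\in E^{\star_f}$ was arbitrary, $E^{\star_f}$ is closed under taking homogeneous components, and since each such $F_0^{\star}$ lies in $R_H$ we also get $E^{\star_f}\subseteq R_H$; hence $E^{\star_f}$ is a homogeneous fractional ideal. The only obstacle is the non-homogeneity of the subideals $F$ occurring in the definition of $\star_f$, and the replacement of $F$ by $F_0$ dispatches it. Note that this argument is purely formal and uses neither the condition $R^{\star}\subsetneq R_H$ nor the content-ideal estimates of Proposition \ref{pp}.
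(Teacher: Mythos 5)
Your proof is correct and follows essentially the same route as the paper's: the paper likewise replaces $F$ by the finitely generated homogeneous ideal $G$ generated by the homogeneous components of the generators of $F$, observes $F\subseteq G\subseteq E$ so that $x\in G^{\star}$, and applies the hypothesis to $G$ to conclude that the homogeneous components of $x$ lie in $G^{\star}\subseteq E^{\star_f}$. Your $F_0$ is exactly the paper's $G$, so there is nothing further to compare.
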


\begin{proof} Let $E$ be a homogenous fractional ideal of $R$. Let
$0\neq x\in E^{\star_f}$. Then, there exists an $F\in f(R)$ such
that $F\subseteq E$ and $x\in F^{\star}$. Suppose that $F$ is
generated by $y_1,\cdots,y_n\in R_H$. Let $G$ be a homogeneous
fractional ideal of $R$, generated by homogeneous components of
$y_1,\cdots,y_n$. Note that $F\subseteq G\subseteq E$ and $x\in
G^{\star}$. Thus homogeneous components of $x$ belong to
$G^{\star}\subseteq E^{\star_f}$. This shows that $E^{\star_f}$ is
homogeneous.
\end{proof}

Note that the $v$-operation sends homogeneous fractional ideals to
homogeneous ones by \cite[Proposition 2.5]{AA2}. Using the above two
results, the $t$ and $w$-operations also, send homogeneous
fractional ideals to homogeneous ones.

It it well-known that
$\QMax^{\star_f}(R)=\QMax^{\widetilde{\star}}(R)$, see \cite[Theorem
2.16]{ACo}, for star operation case, and \cite[Corollary
3.5(2)]{FL}, in general semistar operations. Although Example
\ref{e}, shows that it may happen that $h$-$\QMax^{\star_f}(R)\neq
h$-$\QMax^{\widetilde{\star}}(R)$, we have the following proposition
whose proof is almost the same as \cite[Theorem 2.16]{AC}.

\begin{prop}\label{t=w} Let $R=\bigoplus_{\alpha\in\Gamma}R_{\alpha}$ be a
graded integral domain, $\star$ a semistar operation on $R$ such
that $R^{\star}\subsetneq R_H$, which sends homogeneous fractional
ideals to homogeneous ones. Then
$h$-$\QMax^{\star_f}(R)=h$-$\QMax^{\widetilde{\star}}(R)$.
\end{prop}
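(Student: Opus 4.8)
The plan is to show the two sets of maximal homogeneous quasi-operations coincide by establishing a bijective correspondence at the level of homogeneous quasi-prime ideals, mirroring the ungraded equality $\QMax^{\star_f}(R)=\QMax^{\widetilde{\star}}(R)$. First I would record the two facts that make the proof go through. By Proposition \ref{pp}, $\widetilde{\star}$ sends homogeneous fractional ideals to homogeneous ones, and by Lemma \ref{homof} combined with the hypothesis, $\star_f$ does too; moreover $R^{\widetilde{\star}}\subseteq R^{\star}\subsetneq R_H$, so Lemma \ref{l} applies to both operations and guarantees that maximal homogeneous quasi-ideals exist and are prime. So both sets $h\text{-}\QMax^{\star_f}(R)$ and $h\text{-}\QMax^{\widetilde{\star}}(R)$ consist of homogeneous quasi-prime ideals.

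Next I would exploit the general inequality $\widetilde{\star}\leq\star_f$ (since $E^{\widetilde{\star}}\subseteq E^{\star_f}$ for every $E$, as $\widetilde{\star}$ is the stable finite-type operation below $\star_f$). The workhorse observation is: for a homogeneous ideal $I$, we have $I^{\star_f}=R^{\star}$ if and only if $I^{\widetilde{\star}}\cap R=R$, equivalently $I\nsubseteq Q$ for each $Q\in h\text{-}\QMax^{\widetilde{\star}}(R)$. One direction is immediate from $\widetilde{\star}\leq\star_f$; the reverse uses the description $E^{\widetilde{\star}}=\bigcap\{ER_P\mid P\in\QMax^{\star_f}(R)\}$ restricted to the homogeneous setting, together with the fact that a finitely generated homogeneous $I$ with $I^{\star_f}=R^{\star}$ cannot be contained in any homogeneous quasi-$\widetilde{\star}$-maximal ideal. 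From this I would deduce that a homogeneous ideal is a proper quasi-$\star_f$-ideal precisely when it is a proper quasi-$\widetilde{\star}$-ideal, at least after intersecting with $R$, which forces the maximal elements of the two families to agree.

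Concretely, to prove $h\text{-}\QMax^{\widetilde{\star}}(R)\subseteq h\text{-}\QMax^{\star_f}(R)$, I would take $Q\in h\text{-}\QMax^{\widetilde{\star}}(R)$ and show $Q$ is a homogeneous quasi-$\star_f$-ideal, i.e.\ $Q^{\star_f}\cap R=Q$; since $\widetilde{\star}\leq\star_f$ and $Q=Q^{\widetilde{\star}}\cap R$, the nontrivial containment is $Q^{\star_f}\cap R\subseteq Q$, which I would get by localizing at $Q$ and using that $QR_Q$ is the maximal ideal while $Q^{\star_f}R_Q\subseteq Q^{\star}R_Q\subsetneq R_Q$. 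Maximality of $Q$ among homogeneous quasi-$\widetilde{\star}$-ideals then upgrades to maximality among homogeneous quasi-$\star_f$-ideals because the two families of proper homogeneous quasi-ideals coincide. The reverse inclusion is symmetric once one knows both families agree. The main obstacle I anticipate is the reverse direction of the key equivalence, namely showing $Q^{\star_f}\cap R\subseteq Q$ for a homogeneous quasi-$\widetilde{\star}$-maximal $Q$: this is where one must pass carefully between the stable operation $\widetilde{\star}$ and $\star_f$ and invoke that $R_Q$-localization kills the $\star_f$-closure of $Q$, analogous to the argument in \cite[Theorem 2.16]{AC}, but keeping everything homogeneous throughout.
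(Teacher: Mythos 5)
Your ``workhorse observation'' ($I^{\star_f}=R^{\star}$ iff $I^{\widetilde{\star}}\cap R=R$ for a homogeneous ideal $I$) is exactly the engine of the paper's proof, and your sketch of it is sound. But the two concrete steps you then commit to for the inclusion $h\text{-}\QMax^{\widetilde{\star}}(R)\subseteq h\text{-}\QMax^{\star_f}(R)$ both fail. First, the localization step: you claim $Q^{\star_f}R_Q\subseteq Q^{\star}R_Q\subsetneq R_Q$. The second containment is unjustified and false in general: already in the trivially graded case with $\star=v$, take $R$ a valuation domain whose maximal ideal $M$ is not principal (say with value group $\mathbb{Q}$); then $h\text{-}\QMax^{\widetilde{\star}}(R)=\{M\}$ but $M^{-1}=R$, so $M^{v}=R$ and $M^{v}R_M=R_M$. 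Worse, the inequality you actually need, $Q^{\star_f}R_Q\subsetneq R_Q$, is \emph{equivalent} to the goal $Q^{\star_f}\cap R\subseteq Q$ (any element of $Q^{\star_f}\cap R$ outside $Q$ becomes a unit of $R_Q$), so ``localizing at $Q$'' gives no traction --- the argument is circular. Second, your assertion that ``the two families of proper homogeneous quasi-ideals coincide'' is false: since $\widetilde{\star}\leq\star_f$, every quasi-$\star_f$-ideal is a quasi-$\widetilde{\star}$-ideal, but the converse fails badly in general; by \cite[Lemma 4.1, Remark 4.5]{FH} the quasi-$\widetilde{\star}$-primes are all the nonzero primes contained in some quasi-$\star_f$-maximal ideal, typically a much larger set than $\QSpec^{\star_f}(R)$. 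Only the \emph{maximal} elements agree --- which is the proposition itself, so it cannot be assumed.

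The repair stays inside your own framework and is what the paper does. Given $Q\in h\text{-}\QMax^{\widetilde{\star}}(R)$, suppose $Q^{\star_f}\cap R=R$; then some finitely generated $F\subseteq Q$ has $F^{\star}=R^{\star}$, and since $rF\subseteq Q$ for every $r\in R$, the definition of $\widetilde{\star}$ gives $R\subseteq Q^{\widetilde{\star}}$, whence $Q=Q^{\widetilde{\star}}\cap R=R$, a contradiction. So $Q^{\star_f}\cap R$ is a \emph{proper} homogeneous quasi-$\star_f$-ideal (homogeneous by Lemma \ref{homof}) containing $Q$; being in particular a proper homogeneous quasi-$\widetilde{\star}$-ideal, it equals $Q$ by maximality of $Q$, and $Q$ is then automatically maximal among homogeneous quasi-$\star_f$-ideals because these form a subfamily of the homogeneous quasi-$\widetilde{\star}$-ideals. (The paper routes this last step through Lemma \ref{l}: it embeds $Q^{\star_f}\cap R$ in some $P\in h\text{-}\QMax^{\star_f}(R)$, which by the first inclusion lies in $h\text{-}\QMax^{\widetilde{\star}}(R)$, forcing $P=Q$.) Either way, the decisive tool is the finite-type definition of $\widetilde{\star}$ via finitely generated ideals $F$ with $F^{\star}=R^{\star}$, not localization at $Q$.
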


\begin{proof} Assume that $Q\in h$-$\QMax^{\star_f}(R)$.
Then since $\widetilde{\star}\leq\star_f$ by \cite[Lemma
2.7(1)]{FL}, we have $Q\subseteq Q^{\widetilde{\star}}\cap
R\subseteq Q^{\star_f}\cap R=Q$, that is $Q$ is a
quasi-$\widetilde{\star}$-ideal. Suppose that $Q\notin
h$-$\QMax^{\widetilde{\star}}(R)$. Then $Q$ is properly contained in
some $P\in h$-$\QMax^{\widetilde{\star}}(R)$. So since $Q\in
h$-$\QMax^{\star_f}(R)$, using Lemma \ref{l}, we must have
$P^{\star_f}=R^{\star}$. Thus there is some finitely generated ideal
$F\subseteq P$ such that $F^{\star}=R^{\star}$. So for any $r\in R$,
$rF\subseteq F\subseteq P$. But then, $r\in P^{\widetilde{\star}}$,
so $R\subseteq P^{\widetilde{\star}}$, which implies that
$P^{\widetilde{\star}}=R^{\widetilde{\star}}$, a contradiction.
Therefore, we must have $Q\in h$-$\QMax^{\widetilde{\star}}(R)$.

If $Q\in h$-$\QMax^{\widetilde{\star}}(R)$, then
$Q=Q^{\widetilde{\star}}\cap R\subseteq Q^{\star_f}\cap R\subseteq
R$. Suppose that $Q^{\star_f}\cap R=R$, which implies that
$Q^{\star_f}=R^{\star}$. Then there is a finitely generated ideal
$F\subseteq Q$ such that $F^{\star}=R^{\star}$. Now for any $r\in
R$, $rF\subseteq F\subseteq Q$. Therefore $R\subseteq
Q^{\widetilde{\star}}$, and so $R=Q^{\widetilde{\star}}\cap R=Q$,
which is a contradiction. So $Q^{\star_f}\cap R\subsetneq R$. Now,
since $Q^{\star_f}\cap R$ is a homogeneous quasi-$\star_f$-ideal,
there is a $P\in h$-$\QMax^{\star_f}(R)$ such that $Q\subseteq
Q^{\star_f}\cap R\subseteq P$. From the first half of the proof, we
know that $P\in h$-$\QMax^{\widetilde{\star}}(R)$. So we must have
$P=Q$. Therefore $Q\in h$-$\QMax^{\star_f}(R)$.
\end{proof}

Park in \cite[Lemma 3.4]{P}, proved that $I^w=\bigcap_{P\in
h\text{-}QMax^w(R)}IR_{H\backslash P}$ for each homogeneous ideal
$I$ of $R$.

\begin{prop}\label{tilda} Let $R=\bigoplus_{\alpha\in\Gamma}R_{\alpha}$ be a
graded integral domain, $\star$ a semistar operation on $R$ such
that $R^{\star}\subsetneq R_H$. Then
$I^{\widetilde{\star}}=\bigcap_{P\in
h\text{-}\QMax^{\widetilde{\star}}(R)}IR_{H\backslash P}$ for each
homogeneous ideal $I$ of $R$. Moreover
$I^{\widetilde{\star}}R_{H\backslash P}=IR_{H\backslash P}$ for all
homogeneous ideal $I$ of $R$ and all $P\in
h$-$\QMax^{\widetilde{\star}}(R)$.
\end{prop}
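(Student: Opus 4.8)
The plan is to prove the two assertions in their natural order, using the stable, finite-type nature of $\widetilde{\star}$ together with the tools already available: that $\widetilde{\star}$ sends homogeneous fractional ideals to homogeneous ones (Proposition \ref{pp}), that $h\text{-}\QMax^{\star_f}(R)=h\text{-}\QMax^{\widetilde{\star}}(R)$ (Proposition \ref{t=w}), and the criterion recorded after Example \ref{e}: for a homogeneous ideal $J$ one has $J^{\star_f}=R^{\star}$ if and only if $J\nsubseteq Q$ for every $Q\in h\text{-}\QMax^{\star_f}(R)$. A preliminary observation that I would use repeatedly is that each $R_{H\backslash P}$ is a graded subring of $R_H$, so for homogeneous $I$ the extension $IR_{H\backslash P}$ is a homogeneous (graded) $R$-submodule of $R_H$; hence the intersection $\bigcap_{P}IR_{H\backslash P}$ is homogeneous.

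First I would establish the ``moreover'' statement, because the inclusion $\subseteq$ of the main formula then follows at once: from $I^{\widetilde{\star}}\subseteq I^{\widetilde{\star}}R_{H\backslash P}=IR_{H\backslash P}$, intersecting over all $P$ gives $I^{\widetilde{\star}}\subseteq\bigcap_{P}IR_{H\backslash P}$. So fix $P\in h\text{-}\QMax^{\widetilde{\star}}(R)$. The inclusion $IR_{H\backslash P}\subseteq I^{\widetilde{\star}}R_{H\backslash P}$ is clear. For the reverse, take $x\in I^{\widetilde{\star}}$; since $I^{\widetilde{\star}}$ is homogeneous by Proposition \ref{pp}, I may assume $x$ homogeneous. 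Running the argument in the proof of Proposition \ref{pp} with $E=I$ and $f=x$ produces a finitely generated \emph{homogeneous} ideal $J_0$ with $J_0^{\star}=R^{\star}$ and $xJ_0\subseteq I$ (here $C(x)=Rx$ as $x$ is homogeneous). Being finitely generated, $J_0^{\star_f}=J_0^{\star}=R^{\star}$, so by the criterion and Proposition \ref{t=w} we get $J_0\nsubseteq P$; as $J_0$ is homogeneous it has a homogeneous generator $s\in H\backslash P$. Then $xs\in xJ_0\subseteq I$, so $x=(xs)/s\in IR_{H\backslash P}$.

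Next, for the inclusion $\supseteq$ of the main formula, I would take $x\in\bigcap_{P}IR_{H\backslash P}\subseteq R_H$. Because this intersection is homogeneous and the target $I^{\widetilde{\star}}$ is homogeneous (Proposition \ref{pp}), it suffices to treat homogeneous $x$ (and we may assume $x\neq 0$). The key device is the conductor $J:=(I:_R x)=\{r\in R\mid rx\in I\}$, which is a \emph{homogeneous} ideal precisely because $I$ and $x$ are both homogeneous: if $r=\sum_\beta r_\beta$ with $rx\in I$, comparison of homogeneous components forces each $r_\beta x\in I$. For each $P\in h\text{-}\QMax^{\widetilde{\star}}(R)$, writing $x=a/s$ with $a\in I$ and $s\in H\backslash P$ (clearing denominators over the multiplicative set $H\backslash P$) shows $sx=a\in I$, hence $s\in J$ and $J\nsubseteq P$. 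Thus $J$ avoids every $Q\in h\text{-}\QMax^{\widetilde{\star}}(R)=h\text{-}\QMax^{\star_f}(R)$, so the criterion yields $J^{\star_f}=R^{\star}$. Since $1\in R^{\star}=J^{\star_f}=\bigcup\{F^{\star}\mid F\subseteq J,\ F\in f(R)\}$, some finitely generated $F\subseteq J$ satisfies $1\in F^{\star}$; then $R\subseteq F^{\star}$ gives $R^{\star}\subseteq F^{\star}$, and $F\subseteq R$ gives the reverse, so $F^{\star}=R^{\star}$. As $xF\subseteq I$ by definition of $J$, the description of $\widetilde{\star}$ gives exactly $x\in I^{\widetilde{\star}}$.

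The main obstacle will be the $\supseteq$ direction, and within it two points deserve care. The first is verifying that the conductor $J=(I:_R x)$ is genuinely homogeneous; this is what forces the reduction to homogeneous $x$ and uses homogeneity of $I$ in an essential way. The second is passing from $J^{\star_f}=R^{\star}$ to a \emph{finitely generated} $F\subseteq J$ with $F^{\star}=R^{\star}$, which is exactly the input needed by the defining description of $\widetilde{\star}$; this is where one invokes that $\star_f$ is computed as a union over finitely generated subideals together with $1\in R^{\star}$. By contrast, the ``moreover'' direction is comparatively routine once the homogeneous ideal $J_0$ is extracted from the proof of Proposition \ref{pp}.
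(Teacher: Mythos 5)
Your overall architecture (reduction to homogeneous elements, the conductor $(I:_R x)$, extraction of a finitely generated $F\subseteq(I:_Rx)$ with $F^{\star}=R^{\star}$, and the ``moreover'' part via a homogeneous $s\in J_0\setminus P$) matches the paper's proof. However, there is a genuine gap in your justifications: you twice invoke Proposition \ref{t=w} (that $h\text{-}\QMax^{\star_f}(R)=h\text{-}\QMax^{\widetilde{\star}}(R)$) and the criterion ``$J^{\star_f}=R^{\star}$ if and only if $J\nsubseteq Q$ for every $Q\in h\text{-}\QMax^{\star_f}(R)$'', but both of these tools require the hypothesis that $\star$ sends homogeneous fractional ideals to homogeneous ones, which the present proposition does \emph{not} assume (it assumes only $R^{\star}\subsetneq R_H$). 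This is not cosmetic: Example \ref{e} of the paper is exactly a situation with $R^{\star}\subsetneq R_H$ where $h\text{-}\QMax^{\star_f}(R)=\emptyset$ while $h\text{-}\QMax^{\widetilde{\star}}(R)=\{(X)\}$, so Proposition \ref{t=w} fails there, and the $\star_f$-criterion fails too (every homogeneous ideal vacuously avoids the empty set of maximal ideals, yet $(X)^{\star_f}\cap R=(X)\neq R$). Hence your key step in the $\supseteq$ direction, ``$J$ avoids every $Q\in h\text{-}\QMax^{\widetilde{\star}}(R)=h\text{-}\QMax^{\star_f}(R)$, so the criterion yields $J^{\star_f}=R^{\star}$,'' is unsupported as written, and the same defect occurs in your ``moreover'' part when you pass from $J_0^{\star_f}=R^{\star}$ to $J_0\nsubseteq P$ via Proposition \ref{t=w}.

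The repair is to run both steps at the level of $\widetilde{\star}$ itself, which is what the paper does. By Proposition \ref{pp}, $\widetilde{\star}$ is a finite-type semistar operation which sends homogeneous fractional ideals to homogeneous ones and satisfies $R^{\widetilde{\star}}\subseteq R^{\star}\subsetneq R_H$, so Lemma \ref{l} and the criterion apply legitimately to $\widetilde{\star}$ (with $(\widetilde{\star})_f=\widetilde{\star}$, so the relevant maximal ideals are exactly $h\text{-}\QMax^{\widetilde{\star}}(R)$). For your ``moreover''/$\subseteq$ part: since $J_0$ is finitely generated with $J_0^{\star}=R^{\star}$, every $r\in R$ satisfies $rJ_0\subseteq J_0$, so $R\subseteq J_0^{\widetilde{\star}}$ and hence $J_0^{\widetilde{\star}}=R^{\widetilde{\star}}$; if $J_0\subseteq P$ with $P\in h\text{-}\QMax^{\widetilde{\star}}(R)$, then $R\subseteq J_0^{\widetilde{\star}}\cap R\subseteq P^{\widetilde{\star}}\cap R=P$, a contradiction, so $J_0\nsubseteq P$. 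For the $\supseteq$ direction: your conductor $J=(I:_Rx)$ avoids every $P\in h\text{-}\QMax^{\widetilde{\star}}(R)$, so the criterion applied to $\widetilde{\star}$ gives $J^{\widetilde{\star}}=R^{\widetilde{\star}}$; then $1\in J^{\widetilde{\star}}$ yields, directly from the defining formula for $\widetilde{\star}$, a finitely generated $F\subseteq J$ with $F^{\star}=R^{\star}$, and $xF\subseteq I$ gives $x\in I^{\widetilde{\star}}$. This also removes your detour through $J^{\star_f}=R^{\star}$ and the argument that $1\in F^{\star}$ forces $F^{\star}=R^{\star}$.
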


\begin{proof} By Proposition \ref{pp}, $I^{\widetilde{\star}}$ is a homogeneous
ideal. Also note that $\bigcap_{P\in
h\text{-}\QMax^{\widetilde{\star}}(R)}IR_{H\backslash P}$ is a
homogeneous ideal of $R$. Let $f\in I^{\widetilde{\star}}$ be
homogeneous. Then $fJ\subseteq I$ for some homogeneous finitely
generated ideal $J$ of $R$ such that $J^{\star}=R^{\star}$. It is
easy to see that $J^{\widetilde{\star}}=R^{\widetilde{\star}}$.
Hence we have $J\nsubseteq P$ for all $P\in
h$-$\QMax^{\widetilde{\star}}(R)$. Thus $f\in IR_{H\backslash P}$
for all $P\in h$-$\QMax^{\widetilde{\star}}(R)$. Conversely, let
$f\in \bigcap_{P\in
h\text{-}\QMax^{\widetilde{\star}}(R)}IR_{H\backslash P}$ be
homogeneous. Then $(I:f)$ is a homogeneous ideal which is not
contained in any $P\in h$-$\QMax^{\widetilde{\star}}(R)$. Therefore
$(I:f)^{\widetilde{\star}}=R^{\widetilde{\star}}$. So that there
exist a finitely generated ideal $J\subseteq(I:f)$ such that
$J^{\star}=R^{\star}$. Thus $fJ\subseteq I$, i.e., $f\in
I^{\widetilde{\star}}$. The second assertion follows from the first
one.
\end{proof}

Let $D$ be a domain with quotient field $K$, and let $X$ be an
indeterminate over $K$. For each $f\in K[X]$, we let $c_D(f)$ denote
the content of the polynomial $f$, i.e., the (fractional) ideal of
$D$ generated by the coefficients of $f$. Let $\star$ be a semistar
operation on $D$. If $N_{\star}:=\{g\in D[X]|g\neq0\text{ and
}c_D(g)^{\star}=D^{\star}\}$, then $N_{\star}=
D[X]\backslash\bigcup\{P[X]|P\in\QMax^{\star_f}(D)\}$ is a saturated
multiplicative subset of $D[X]$. The ring of fractions
$$\Na(D,\star):=D[X]_{N_{\star}}$$ is called the $\star$-{\it Nagata domain (of $D$ with respect to the
semistar operation} $\star$). When $\star=d$, the identity
(semi)star operation on $D$, then $\Na(D,d)$ coincides with the
classical Nagata domain $D(X)$ (as in, for instance \cite[page
18]{Na}, \cite[Section 33]{G} and \cite{FL}).

Let $N_{\star}(H)=\{f\in R|C(f)^{\star}=R^{\star}\}$. It is easy to
see that $N_{\star}(H)$ is a saturated multiplicative subset of $R$.
Indeed assume $f,g\in N_{\star}(H)$. Then
$C(f)^{n+1}C(g)=C(f)^nC(fg)$ for some integer $n\geq1$ by
\cite[Lemma 1.1(2)]{AC}, and $C(fg)\subseteq C(f)C(g)$. Thus $fg\in
N_{\star}(H)\Leftrightarrow C(fg)^{\star}=R^{\star}\Leftrightarrow
C(f)^{\star}=C(g)^{\star}=R^{\star}\Leftrightarrow f, g\in
N_{\star}(H)$. Also it is easy to show that
$N_{\star}(H)=N_{\star_f}(H)=N_{\widetilde{\star}}(H)$. We define
the graded integral domain analogue of $\star$-Nagata ring, by the
quotient ring $R_{N_{\star}(H)}$. When $\star=v$, $R_{N_{\star}(H)}$
was studied in \cite{AC}, denoted by $R_{N(H)}$.

\begin{lem}\label{sharp} Let $R=\bigoplus_{\alpha\in\Gamma}R_{\alpha}$ be a
graded integral domain, and $\star$ be a semistar operation on $R$
such that $R^{\star}\subsetneq R_H$, which sends homogeneous
fractional ideals to homogeneous ones.
\begin{itemize}
\item[(1)] $N_{\star}(H)=R\backslash\bigcup_{Q\in h\text{-}\QMax^{\star_f}(R)}Q$.
\item[(2)] $\Max(R_{N_{\star}(H)})=\{QR_{N_{\star}(H)}|Q\in h\text{-}\QMax^{\star_f}(R)\}$
if and only if $R$ has the property that if $I$ is a nonzero ideal
of $R$ with $C(I)^{\star}=R^{\star}$, then $I\cap
N_{\star}(H)\neq\emptyset$.
\end{itemize}
\end{lem}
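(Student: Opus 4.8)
The plan is to prove the two statements separately, using Proposition \ref{tilda} and the fact (already established) that $N_{\star}(H)=N_{\star_f}(H)=N_{\widetilde{\star}}(H)$ together with $h$-$\QMax^{\star_f}(R)=h$-$\QMax^{\widetilde{\star}}(R)$ from Proposition \ref{t=w}.

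For part (1), I would argue by double inclusion at the level of elements. Let $f\in R$. The key observation is that $f\in N_{\star}(H)$ means $C(f)^{\star}=R^{\star}$; since $C(f)$ is a homogeneous finitely generated ideal and $\star=\star_f$ on such ideals agrees with $\star$, this is equivalent to $C(f)^{\star_f}=R^{\star}$. By the remark immediately following Example \ref{e} (``if $I$ is a homogeneous ideal with $I^{\star_f}=R^{\star}$ if and only if $I\nsubseteq Q$ for each $Q\in h$-$\QMax^{\star_f}(R)$''), this holds if and only if $C(f)\nsubseteq Q$ for every $Q\in h$-$\QMax^{\star_f}(R)$. Finally, since each such $Q$ is homogeneous, $C(f)\subseteq Q$ is equivalent to $f\in Q$ (the homogeneous components of $f$ lie in $Q$ precisely when $f$ does, for homogeneous $Q$). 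Stringing these equivalences together gives $f\in N_{\star}(H)$ if and only if $f\notin Q$ for all $Q\in h$-$\QMax^{\star_f}(R)$, which is exactly the claimed complement description.

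For part (2), this is an ``if and only if'' between a statement about maximal ideals of the localization and a colon/contraction property of $R$. I would handle the forward and reverse directions using the standard correspondence between primes of $R_{N_{\star}(H)}$ and primes of $R$ missing $N_{\star}(H)$. By part (1), the primes $Q\in h$-$\QMax^{\star_f}(R)$ are precisely the homogeneous quasi-$\star_f$-maximal primes disjoint from $N_{\star}(H)$, so each $QR_{N_{\star}(H)}$ is a proper ideal; the content is whether these exhaust $\Max(R_{N_{\star}(H)})$. The natural strategy is to show that a prime $P R_{N_\star(H)}$ (with $P\cap N_\star(H)=\emptyset$) is maximal in the localization iff $P$ is not properly contained in any other prime disjoint from $N_\star(H)$, and to translate the condition ``$C(I)^{\star}=R^{\star}$ implies $I\cap N_{\star}(H)\neq\emptyset$'' into the statement that every ideal $I$ with $C(I)$ ``$\star$-full'' already meets the multiplicative set. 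Concretely, the failure of the maximal-ideal description corresponds to the existence of an ideal $I$ of $R$ with $IR_{N_{\star}(H)}$ proper (i.e. $I\cap N_{\star}(H)=\emptyset$) yet with $C(I)^{\star}=R^{\star}$, and conversely.

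The main obstacle I expect is the non-homogeneity of a general ideal $I$ appearing in part (2): the hypothesis is phrased for arbitrary nonzero ideals $I$ with $C(I)^{\star}=R^{\star}$, not just homogeneous ones, whereas all the machinery (Proposition \ref{tilda}, the description of $h$-$\QMax$) is naturally homogeneous. The bridge is the passage $I\rightsquigarrow C(I)$, which is always a homogeneous ideal, combined with the fact that an element $f$ of $I$ already carries $C(f)\subseteq C(I)$; so I would reduce the contraction condition for $I$ to a statement about the homogeneous ideal $C(I)$ and then apply part (1) to detect membership in $N_{\star}(H)$. The delicate point is that $I$ meeting $N_{\star}(H)$ is stronger than $C(I)^\star=R^\star$ in general, and it is exactly the special property of $R$ in the hypothesis that forces these two to coincide, matching the maximality statement precisely; getting this equivalence watertight in both directions is where the real care is needed.
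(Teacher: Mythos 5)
Your part (1) is correct and is essentially the paper's own proof: the same chain of equivalences ($f\in N_{\star}(H)\Leftrightarrow C(f)^{\star}=R^{\star}$, then the remark following Example \ref{e} applied to the finitely generated homogeneous ideal $C(f)$, then homogeneity of each $Q$ to convert $C(f)\nsubseteq Q$ into $f\notin Q$) appears there verbatim.

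Part (2) is where the genuine gap lies: your proposal does not prove either implication. The ``concrete correspondence'' you announce --- that the maximal-ideal description fails if and only if there is a nonzero ideal $I$ with $I\cap N_{\star}(H)=\emptyset$ and $C(I)^{\star}=R^{\star}$ --- is literally the contrapositive of the statement of (2), so taking it as the pivot of the argument is circular; you yourself concede that making it ``watertight'' is the real issue, and that is exactly the part you leave out. You are right that the bridge is the passage from a general ideal $I$ to the homogeneous ideal $C(I)$, but you never say how the bridge is crossed. In the paper, the direction (property $\Rightarrow$ description) runs a dichotomy on $C(I)$ for an arbitrary nonzero ideal $I\subseteq\bigcup_{Q\in h\text{-}\QMax^{\star_f}(R)}Q$: if $C(I)^{\star_f}=R^{\star}$, then a fortiori $C(I)^{\star}=R^{\star}$ (this use of $\star_f\leq\star$ is precisely how the $\star$ in the stated property is reconciled with the $\star_f$ governing $h\text{-}\QMax^{\star_f}(R)$), so the property yields $f\in I\cap N_{\star}(H)$, contradicting part (1); otherwise $C(I)^{\star_f}\cap R$ is a proper homogeneous quasi-$\star_f$-ideal (homogeneous by Lemma \ref{homof}), and Lemma \ref{l} --- nowhere mentioned in your outline --- places it, and hence $I$, inside a \emph{single} $Q$. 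Only this ``one prime suffices'' statement lets one invoke \cite[Proposition 4.8]{G} to conclude $\Max(R_{N_{\star}(H)})=\{QR_{N_{\star}(H)}\,|\,Q\in h\text{-}\QMax^{\star_f}(R)\}$.

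Your localization criterion (maximality of $PR_{N_{\star}(H)}$ iff $P$ is maximal among primes disjoint from $N_{\star}(H)$) is correct but does not bridge this gap: a priori a prime maximal with respect to missing $N_{\star}(H)$ could be non-homogeneous, or homogeneous without being a quasi-$\star_f$-ideal, and nothing in your plan excludes this. For the converse direction (description $\Rightarrow$ property), the paper's argument is short: $C(I)^{\star}=R^{\star}$ forces $I\nsubseteq Q$ for every $Q$ (homogeneity of $Q$ gives $C(I)\subseteq Q$ whenever $I\subseteq Q$), so $IR_{N_{\star}(H)}$ lies in no maximal ideal of $R_{N_{\star}(H)}$ and therefore equals $R_{N_{\star}(H)}$, i.e.\ $I\cap N_{\star}(H)\neq\emptyset$; again this is an actual mechanism, not a restatement. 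Finally, the tools you propose to lean on --- Proposition \ref{tilda}, Proposition \ref{t=w}, and $N_{\star}(H)=N_{\widetilde{\star}}(H)$ --- play no role in the paper's proof of this lemma and do not supply any of the missing steps.
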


\begin{proof} (1) Let $x\in R$. Then $x\in
N_{\star}(H)\Leftrightarrow C(x)^{\star}=R^{\star}\Leftrightarrow
C(x)\nsubseteq Q$ for all $Q\in
h\text{-}\QMax^{\star_f}(R)\Leftrightarrow x\notin Q$ for all $Q\in
h\text{-}\QMax^{\star_f}(R)\Leftrightarrow$ $x\in
R\backslash\bigcup_{Q\in h\text{-}\QMax^{\star_f}(R)}Q$.

(2) $(\Rightarrow)$ Let $I$ is a nonzero ideal of $R$ with
$C(I)^{\star}=R^{\star}$. Then $I\nsubseteq Q$ for all $Q\in
h\text{-}\QMax^{\star_f}(R)$, and hence
$IR_{N_{\star}(H)}=R_{N_{\star}(H)}$. Thus $I\cap
N_{\star}(H)\neq\emptyset$.

$(\Leftarrow)$ Let $I$ be a nonzero ideal of $R$ such that
$I\subseteq\bigcup_{Q\in h\text{-}\QMax^{\star_f}(R)}Q$. If
$C(I)^{\star_f}=R^{\star}$, then, by assumption, there exists an
$f\in I$ with $C(f)^{\star}=R^{\star}$. But, since
$I\subseteq\bigcup_{Q\in h\text{-}\QMax^{\star_f}(R)}Q$, we have
$f\in Q$ for some $Q\in h\text{-}\QMax^{\star_f}(R)$, a
contradiction. Thus $C(I)^{\star}\subsetneq R^{\star}$, and hence
$I\subseteq Q$ for some $Q\in h\text{-}\QMax^{\star_f}(R)$. Thus
$\{QR_{N_{\star}(H)}|Q\in h\text{-}\QMax^{\star_f}(R)\}$ is the set
of maximal ideals of $R_{N_{\star}(H)}$ by \cite[Proposition
4.8]{G}.
\end{proof}

We will say that $R$ satisfies property $(\#_{\star})$ if, for any
nonzero ideal $I$ of $R$, $C(I)^{\star}=R^{\star}$ implies that
there exists an $f\in I$ such that $C(f)^{\star}=R^{\star}$.

\begin{exam} Let $R=\bigoplus_{\alpha\in\Gamma}R_{\alpha}$ be a
graded integral domain, and let $\star$ be a semistar operation on
$R$. If $R$ contains a unit of nonzero degree, then $R$ satisfies
property $(\#_{\star})$ (see \cite[Example 1.6]{AC} for the case
$\star=t$).
\end{exam}

The next result is a generalization of the fact that
$I^{\widetilde{\star}}=I\Na(R,\star)\cap K$, where $K$ is the
quotient field of $R$ \cite[Proposition 3.4(3)]{FL}.

\begin{lem}\label{w} Let $R=\bigoplus_{\alpha\in\Gamma}R_{\alpha}$ be a
graded integral domain, and $\star$ be a semistar operation on $R$
such that $R^{\star}\subsetneq R_H$, with property $(\#_{\star})$.
Then $I^{\widetilde{\star}}=IR_{N_{\star}(H)}\cap R_H$ and
$I^{\widetilde{\star}}R_{N_{\star}(H)}=IR_{N_{\star}(H)}$ for each
homogeneous ideal $I$ of $R$. In particular $R^{\widetilde{\star}}$
is integrally closed if and only if $R_{N_{\star}(H)}$ is integrally
closed.
\end{lem}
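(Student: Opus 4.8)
The plan is to prove the two displayed identities first and then derive the integral-closure equivalence. Throughout, write $S:=N_{\star}(H)$, and recall from Lemma \ref{sharp}(1) that $S=R\backslash\bigcup_{Q\in h\text{-}\QMax^{\star_f}(R)}Q$. By Proposition \ref{pp} the module $I^{\widetilde{\star}}$ is homogeneous and contained in $R_H$, so $I^{\widetilde{\star}}=I^{\widetilde{\star}}\cap R_H$; hence it suffices to establish the equality $I^{\widetilde{\star}}=IR_S\cap R_H$ via the two containments, after which $I^{\widetilde{\star}}R_S=IR_S$ follows formally from $I\subseteq I^{\widetilde{\star}}\subseteq IR_S$, which gives $IR_S\subseteq I^{\widetilde{\star}}R_S\subseteq IR_S$.

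For $I^{\widetilde{\star}}\subseteq IR_S\cap R_H$, I would reduce to a homogeneous element $f\in I^{\widetilde{\star}}$, which is legitimate since $I^{\widetilde{\star}}$ is homogeneous. As in the proof of Proposition \ref{tilda}, for homogeneous $f$ one can choose a \emph{homogeneous} finitely generated ideal $J$ with $J^{\star}=R^{\star}$ and $fJ\subseteq I$. Since $J$ is homogeneous, $C(J)=J$ and so $C(J)^{\star}=R^{\star}$; property $(\#_{\star})$ then produces an element $s\in J$ with $C(s)^{\star}=R^{\star}$, i.e. $s\in S$. Then $fs\in fJ\subseteq I$ gives $f\in IR_S$, and $f\in R_H$ because $I^{\widetilde{\star}}\subseteq R_H$. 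This is the only place property $(\#_{\star})$ is used, and taking $J$ homogeneous is essential, for otherwise the element $s$ produced by $(\#_{\star})$ would lie in $C(J)$ rather than in $J$ and we could not conclude $fs\in I$.

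For the reverse containment, take $x\in IR_S\cap R_H$, so that $sx\in I$ for some $s\in S$. By the Dedekind--Mertens type formula \cite[Lemma 1.1(1)]{AC} there is an integer $m\geq1$ with $C(s)^{m+1}C(x)=C(s)^{m}C(sx)$; since $I$ is homogeneous and $sx\in I$ we have $C(sx)\subseteq I$, whence $C(s)^{m+1}C(x)\subseteq C(s)^{m}I\subseteq I$. It remains to check that $(C(s)^{m+1})^{\star}=R^{\star}$: here $C(s)$ is a finitely generated homogeneous ideal with $C(s)^{\star_f}=R^{\star}$, so by the characterization following Example \ref{e} we have $C(s)\nsubseteq Q$ for every $Q\in h\text{-}\QMax^{\star_f}(R)$; as each such $Q$ is prime, $C(s)^{m+1}\nsubseteq Q$ as well, and the same characterization gives $(C(s)^{m+1})^{\star_f}=R^{\star}$, hence $(C(s)^{m+1})^{\star}=R^{\star}$. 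Setting $J_0:=C(s)^{m+1}$, we obtain $C(x)J_0\subseteq I$ with $J_0$ finitely generated and $J_0^{\star}=R^{\star}$, so every homogeneous component of $x$ lies in $I^{\widetilde{\star}}$, and therefore $x\in I^{\widetilde{\star}}$.

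Finally, for the integral-closure equivalence I would take $I=R$ in the first identity to get $R^{\widetilde{\star}}=R_S\cap R_H$, and observe that $R_S=(R^{\widetilde{\star}})_S$: indeed $R\subseteq R^{\widetilde{\star}}\subseteq R_S$ yields $R_S\subseteq (R^{\widetilde{\star}})_S$, while localizing $R^{\widetilde{\star}}=R_S\cap R_H\subseteq R_S$ at $S$ keeps us inside $R_S$ since the elements of $S$ are already units there. If $R^{\widetilde{\star}}$ is integrally closed, then because integral closure commutes with localization at $S$ we get $R_S=(R^{\widetilde{\star}})_S$ integrally closed. Conversely, if $R_S$ is integrally closed and $z\in K$ is integral over $R^{\widetilde{\star}}$, then $z$ is integral over $R_S$, so $z\in R_S$; and $z$ is integral over $R_H$, so $z\in R_H$ because $R_H$ is a graded field and hence integrally closed (cf. \cite{AA2}); thus $z\in R_S\cap R_H=R^{\widetilde{\star}}$. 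I expect the main obstacle to be the reverse containment of the first identity, namely controlling a non-homogeneous $s\in S$ through the content formula and verifying $(C(s)^{m+1})^{\star}=R^{\star}$, together with the integral-closedness of $R_H$ that underlies the ``only if'' direction of the last statement.
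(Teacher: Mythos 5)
Your overall strategy mirrors the paper's: both identities are obtained by double containment, with property $(\#_{\star})$ giving $I^{\widetilde{\star}}\subseteq IR_{N_{\star}(H)}\cap R_H$, the Dedekind--Mertens content formula giving the reverse inclusion, and the second identity following formally. However, one step as written is not justified under the stated hypotheses. In the reverse containment you prove $(C(s)^{m+1})^{\star}=R^{\star}$ by appealing to Lemma \ref{sharp}(1) and to ``the characterization following Example \ref{e}'', i.e.\ to the equivalence $I^{\star_f}=R^{\star}\Leftrightarrow I\nsubseteq Q$ for all $Q\in h\text{-}\QMax^{\star_f}(R)$. But that equivalence (like Lemma \ref{sharp}) is established in the paper only under the \emph{additional} hypothesis that $\star$ sends homogeneous fractional ideals to homogeneous ones, since it rests on Lemma \ref{l}; this hypothesis is not part of Lemma \ref{w}. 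Example \ref{e} shows the appeal can genuinely fail without it: there $h\text{-}\QMax^{\star_f}(R)=\emptyset$, so the right-hand side of the equivalence is vacuously true for every homogeneous ideal, while for instance $(X)^{\star_f}\neq R^{\star}$. So the direction you actually need (non-containment in the $Q$'s implies $\star_f$-trivality) is unavailable. The repair is easy: either compute directly, as the paper does, $(C(s)^{m+1})^{\star}=\bigl((C(s)^{\star})^{m+1}\bigr)^{\star}=\bigl((R^{\star})^{m+1}\bigr)^{\star}=R^{\star}$, using the standard semistar identity $(AB)^{\star}=(A^{\star}B^{\star})^{\star}$; or run your avoidance argument with $\widetilde{\star}$ and $h\text{-}\QMax^{\widetilde{\star}}(R)$, which \emph{is} legitimate because Proposition \ref{pp} guarantees $\widetilde{\star}$ preserves homogeneity and $N_{\star}(H)=N_{\widetilde{\star}}(H)$.

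A second, smaller point: you claim that homogenizing $J$ in the forward containment is ``essential'' because $(\#_{\star})$ would only produce an element of $C(J)$ rather than of $J$. That misreads the property: $(\#_{\star})$ says that $C(J)^{\star}=R^{\star}$ yields an element $h\in J$ \emph{itself} with $C(h)^{\star}=R^{\star}$, and since $J\subseteq C(J)\subseteq R$ forces $C(J)^{\star}=R^{\star}$ whenever $J^{\star}=R^{\star}$, the paper applies $(\#_{\star})$ directly to the (possibly non-homogeneous) $J$ and concludes $fh\in I$ with $h\in N_{\star}(H)$. Your homogenization step is harmless but unnecessary. On the other hand, your treatment of the ``in particular'' statement is correct and in fact cleaner than the paper's: identifying $R_{N_{\star}(H)}=(R^{\widetilde{\star}})_{N_{\star}(H)}$ and viewing $R^{\widetilde{\star}}=R_{N_{\star}(H)}\cap R_H$ as an intersection of integrally closed domains avoids the paper's appeal to \cite[Proposition 3.8]{DS} and to the maximal-ideal structure of the Nagata ring.
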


\begin{proof} If $I^{\widetilde{\star}}=IR_{N_{\star}(H)}\cap
R_H$, then it is easy to see that
$I^{\widetilde{\star}}R_{N_{\star}(H)}=IR_{N_{\star}(H)}$. Hence it
suffices to show that $I^{\widetilde{\star}}=IR_{N_{\star}(H)}\cap
R_H$.

$(\subseteq)$ Let $f\in I^{\widetilde{\star}}(\subseteq R_H)$, and
let $J$ be a finitely generated ideal of $R$ such that
$J^{\star}=R^{\star}$ and $fJ\subseteq I$. Then
$C(J)^{\star}=R^{\star}$, and since $R$ satisfies property
$(\#_{\star})$, there exists an $h\in J$ with
$C(h)^{\star}=R^{\star}$. Hence $h\in N_{\star}(H)$ and $fh\in I$.
Thus $f\in IR_{N_{\star}(H)}\cap R_H$.

$(\supseteq)$ Let $f=\frac{g}{h}\in IR_{N_{\star}(H)}\cap R_H$,
where $g\in I$ and $h\in N_{\star}(H)$. Then $fh=g\in I$, and since
$C(h)^{m+1}C(f)=C(h)^mC(fh)$ for some integer $m\geq1$ by
\cite[Lemma 1.1(1)]{AC}, we have $fC(h)^{m+1}\subseteq
C(f)C(h)^{m+1}=C(h)^mC(fh)=C(h)^mC(g)\subseteq I$. Also note that
$(C(h)^{m+1})^{\star}=R^{\star}$, since $C(h)^{\star}=R^{\star}$.
Thus $f\in I^{\widetilde{\star}}$.

For the in particular case, assume that $R_{N_{\star}(H)}$ is
integrally closed. Using \cite[Proposition 2.1]{AA2}, $R_H$ is a
GCD-domain, hence is integrally closed. Therefore
$R^{\widetilde{\star}}=R_{N_{\star}(H)}\cap R_H$ is integrally
closed. Conversely, assume that $R^{\widetilde{\star}}$ is
integrally closed. Then $R_Q$ is integrally closed by
\cite[Proposition 3.8]{DS} for all
$Q\in\QSpec^{\widetilde{\star}}(R)$. Let $QR_{N_{\star}(H)}$ be a
maximal ideal of $R_{N_{\star}(H)}$ for some $Q\in
h$-$\QMax^{\widetilde{\star}}(R)$. Then
$(R_{N_{\star}(H)})_{QR_{N_{\star}(H)}}=R_Q$ is integrally closed.
Thus $R_{N_{\star}(H)}$ is integrally closed.
\end{proof}

\begin{lem}\label{inv} Let $R=\bigoplus_{\alpha\in\Gamma}R_{\alpha}$ be a
graded integral domain, and $\star$ be a semistar operation on $R$
such that $R^{\star}\subsetneq R_H$, with property $(\#_{\star})$.
Then for each nonzero finitely generated homogeneous ideal $I$ of
$R$, $I$ is $\star_f$-invertible if and only if, $IR_{N_{\star}(H)}$
is invertible.
\end{lem}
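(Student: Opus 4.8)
The plan is to prove both implications at once, via the chain of equivalences: $I$ is $\star_f$-invertible if and only if $(II^{-1})R_{N_{\star}(H)}=R_{N_{\star}(H)}$ if and only if $IR_{N_{\star}(H)}$ is invertible. The heart of the matter is the description of the maximal ideals of the Nagata ring furnished by property $(\#_{\star})$. I would begin with the homogeneity bookkeeping that makes the earlier lemmas applicable: writing $I=(a_1,\dots,a_n)$ with the $a_i$ homogeneous, one has $I^{-1}=(R:I)=\bigcap_i a_i^{-1}R$, which is homogeneous, so $II^{-1}$ is a homogeneous integral ideal of $R$. This is what allows me to invoke the observation recorded after Example \ref{e}, that for a homogeneous ideal $(II^{-1})^{\star_f}=R^{\star}$ if and only if $II^{-1}\nsubseteq Q$ for every $Q\in h\text{-}\QMax^{\star_f}(R)$; and by definition the left-hand condition is exactly the $\star_f$-invertibility of $I$.

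Next I would pin down $\Max(R_{N_{\star}(H)})$. Since $R$ satisfies property $(\#_{\star})$, which is precisely the hypothesis appearing in Lemma \ref{sharp}(2), the maximal ideals of $R_{N_{\star}(H)}$ are exactly the $QR_{N_{\star}(H)}$ with $Q\in h\text{-}\QMax^{\star_f}(R)$; and by Lemma \ref{sharp}(1) each such $Q$ is disjoint from $N_{\star}(H)$, so $QR_{N_{\star}(H)}\cap R=Q$. From these two facts I would deduce, by contracting back to $R$, that a homogeneous ideal is contained in no $Q$ if and only if its extension to $R_{N_{\star}(H)}$ lies in no maximal ideal, i.e. if and only if that extension is all of $R_{N_{\star}(H)}$. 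Applied to $II^{-1}$, this gives the first equivalence in the chain, namely $(II^{-1})^{\star_f}=R^{\star}$ if and only if $(II^{-1})R_{N_{\star}(H)}=R_{N_{\star}(H)}$.

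For the second equivalence I would compute $(II^{-1})R_{N_{\star}(H)}$ inside the Nagata ring. Because extension of ideals commutes with products, $(II^{-1})R_{N_{\star}(H)}=(IR_{N_{\star}(H)})(I^{-1}R_{N_{\star}(H)})$; and because $I$ is finitely generated, the colon commutes with the (flat) localization, so that $(IR_{N_{\star}(H)})^{-1}=I^{-1}R_{N_{\star}(H)}$. Hence $(II^{-1})R_{N_{\star}(H)}=(IR_{N_{\star}(H)})(IR_{N_{\star}(H)})^{-1}$. Since a nonzero fractional ideal of a domain is invertible exactly when its product with its inverse is the whole ring, this yields $(II^{-1})R_{N_{\star}(H)}=R_{N_{\star}(H)}$ if and only if $IR_{N_{\star}(H)}$ is invertible, closing the chain.

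The main obstacle is the middle step: everything hinges on knowing that \emph{every} maximal ideal of $R_{N_{\star}(H)}$ is extended from some $Q\in h\text{-}\QMax^{\star_f}(R)$, and this is exactly where property $(\#_{\star})$, through Lemma \ref{sharp}(2), is indispensable (without it one only controls the extended maximal ideals, and the equivalence with $(II^{-1})R_{N_{\star}(H)}=R_{N_{\star}(H)}$ can fail). The finite generation of $I$ is likewise essential, since it is what legitimizes the colon–localization identity $(IR_{N_{\star}(H)})^{-1}=I^{-1}R_{N_{\star}(H)}$; everything else is routine localization bookkeeping.
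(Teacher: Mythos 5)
Your proof is correct, and it takes a genuinely different route from the paper's in one of the two directions. For the implication ``$IR_{N_{\star}(H)}$ invertible $\Rightarrow$ $I$ is $\star_f$-invertible'' you and the paper do essentially the same thing: flatness plus finite generation of $I$ gives $(IR_{N_{\star}(H)})^{-1}=I^{-1}R_{N_{\star}(H)}$, hence $(II^{-1})R_{N_{\star}(H)}=R_{N_{\star}(H)}$; the paper then picks $f\in II^{-1}\cap N_{\star}(H)$ and uses $C(f)\subseteq II^{-1}$ (homogeneity of $II^{-1}$, which you justify explicitly and the paper leaves implicit) to conclude, while you pass through $\Max(R_{N_{\star}(H)})$ instead --- equivalent bookkeeping via Lemma \ref{sharp}(1). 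The genuine divergence is the forward direction: the paper localizes at each maximal ideal $QR_{N_{\star}(H)}$ with $Q\in h\text{-}\QMax^{\widetilde{\star}}(R)$, quotes \cite[Theorem 2.23]{FP} to see that $IR_Q$ is principal, and then deduces invertibility of the finitely generated ideal $IR_{N_{\star}(H)}$ from the locally-principal criterion \cite[Theorem 7.3]{G}; you instead run the same flatness identity in reverse, reducing $\star_f$-invertibility of $I$ to $(II^{-1})R_{N_{\star}(H)}=R_{N_{\star}(H)}$ by combining Lemma \ref{sharp}(2) with the observation following Example \ref{e} (a homogeneous integral ideal has $\star_f$-closure equal to $R^{\star}$ if and only if it avoids every $Q\in h\text{-}\QMax^{\star_f}(R)$). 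Your version is more symmetric and self-contained, making both implications flow through the single equation $(II^{-1})R_{N_{\star}(H)}=R_{N_{\star}(H)}$ and avoiding the semistar-invertibility machinery of \cite{FP}; the paper's version is shorter given those external results and additionally records the local structure ($IR_Q$ principal at each relevant $Q$). One caveat, which applies equally to the paper's own proof: the observation after Example \ref{e} and Lemma \ref{sharp} presuppose that $\star$ sends homogeneous fractional ideals to homogeneous ones, a hypothesis not restated in Lemma \ref{inv} but in force as the paper's standing convention from Section 2 (it is what makes the Zorn argument of Lemma \ref{l}, and hence the description of $h\text{-}\QMax^{\star_f}(R)$, available); so your appeal to those facts stands on the same footing as the paper's appeal to Lemma \ref{sharp}(2).
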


\begin{proof} Let $I$ be nonzero finitely
generated homogeneous ideal of $R$, such that $I$ is
$\star_f$-invertible. Let
$QR_{N_{\star}(H)}\in\Max(R_{N_{\star}(H)})$, where $Q\in
h$-$\QMax^{\widetilde{\star}}(R)$ by Lemma \ref{sharp}(2). Thus by
\cite[Theorem 2.23]{FP},
$(IR_{N_{\star}(H)})_{QR_{N_{\star}(H)}}=IR_Q$ is invertible (is
principal) in $R_Q$. Hence $IR_{N_{\star}(H)}$ is invertible by
\cite[Theorem 7.3]{G}. Conversely, assume that $I$ is finitely
generated, and $IR_{N_{\star}(H)}$ is invertible. By flatness we
have
$I^{-1}R_{N_{\star}(H)}=(R:I)R_{N_{\star}(H)}=(R_{N_{\star}(H)}:IR_{N_{\star}(H)})=(IR_{N_{\star}(H)})^{-1}$.
Therefore, $(II^{-1})R_{N_{\star}(H)}=(IR_{N_{\star}(H)})
(I^{-1}R_{N_{\star}(H)})=(IR_{N_{\star}(H)})(IR_{N_{\star}(H)})^{-1}=R_{N_{\star}(H)}$.
Hence $II^{-1}\cap N_{\star}(H)\neq\emptyset$. Let $f\in II^{-1}\cap
N_{\star}(H)$. So that
$R^{\star}=C(f)^{\star}\subseteq(II^{-1})^{\star_f}\subseteq
R^{\star}$. Thus $I$ is $\star_f$-invertible.
\end{proof}

\begin{cor}\label{C(f)} Let $R=\bigoplus_{\alpha\in\Gamma}R_{\alpha}$ be a
graded integral domain, and $\star$ be a semistar operation on $R$
such that $R^{\star}\subsetneq R_H$, with property $(\#_{\star})$
and $0\neq f\in R$. Then the following conditions are equivalent:
\begin{itemize}
\item[(1)] $C(f)$ is $\star_f$-invertible.
\item[(2)] $C(f)R_{N_{\star}(H)}$ is invertible.
\item[(3)] $C(f)R_{N_{\star}(H)}=fR_{N_{\star}(H)}$.
\end{itemize}
\end{cor}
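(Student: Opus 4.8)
The plan is to prove $(1)\Rightarrow(3)\Rightarrow(2)\Rightarrow(1)$. Here $(2)\Rightarrow(1)$ is immediate from Lemma \ref{inv}, since $C(f)$ is a nonzero finitely generated homogeneous ideal of $R$; and $(3)\Rightarrow(2)$ is trivial, as a nonzero principal ideal $fR_{N_{\star}(H)}$ of a domain is invertible. Because $f\in C(f)$ we always have $fR_{N_{\star}(H)}\subseteq C(f)R_{N_{\star}(H)}$, so the whole force of the statement is the reverse inclusion in $(1)\Rightarrow(3)$. To establish it I would check equality locally: by Lemma \ref{sharp}(2) (available since $R$ has property $(\#_{\star})$) together with Proposition \ref{t=w}, the maximal ideals of $R_{N_{\star}(H)}$ are exactly the $QR_{N_{\star}(H)}$ with $Q\in h\text{-}\QMax^{\widetilde{\star}}(R)$, and $(R_{N_{\star}(H)})_{QR_{N_{\star}(H)}}=R_Q$. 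Thus it suffices to show $C(f)R_Q=fR_Q$ for every such $Q$.

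Fix $Q\in h\text{-}\QMax^{\widetilde{\star}}(R)$ and set $S:=R_{H\backslash Q}$, a graded homogeneous overring whose nonzero homogeneous elements are either units or lie in the homogeneous maximal ideal $QS$, and with $R_Q=S_{QS}$. Assuming $(1)$, the homogeneous integral ideal $C(f)C(f)^{-1}$ (note $C(f)^{-1}=(R:C(f))$ is homogeneous) satisfies $(C(f)C(f)^{-1})^{\star_f}=R^{\star}$, so by the remark following Example \ref{e} it is not contained in $Q$; a homogeneous element of it then becomes a unit in $S$, which gives $C(f)S\cdot C(f)^{-1}S=S$, i.e.\ $C(f)S$ is invertible in $S$. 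Writing $f=f_{\alpha_1}+\cdots+f_{\alpha_n}$ for the nonzero homogeneous components, $C(f)S=(f_{\alpha_1},\dots,f_{\alpha_n})S$ is therefore principal; and because $S$ is graded-quasilocal one of the generators already generates, i.e.\ $C(f)S=f_{\alpha_j}S$ for some $j$ (if $C(f)S=gS$ with $g$ homogeneous, then $f/g$ has unit content in $S$, so some $f_{\alpha_i}/g$ is a unit of $S$).

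For the crux, consider $u:=f/f_{\alpha_j}=\sum_i f_{\alpha_i}/f_{\alpha_j}$. Each summand lies in $S$ (since $f_{\alpha_i}\in C(f)S=f_{\alpha_j}S$) and is homogeneous of degree $\alpha_i-\alpha_j$; as the $\alpha_i$ are distinct, the only summand of degree $0$ is the one with $i=j$, namely $1$. Hence the degree-$0$ homogeneous component of $u$ in the graded domain $S$ equals $1$, so $u\notin QS$, and therefore $u$ is a unit in $R_Q=S_{QS}$. This gives $fR_Q=f_{\alpha_j}R_Q=C(f)R_Q$, and letting $Q$ range over $h\text{-}\QMax^{\widetilde{\star}}(R)$ yields $C(f)R_{N_{\star}(H)}=fR_{N_{\star}(H)}$, which is $(3)$. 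I expect the genuinely delicate points to be this last degree-$0$ bookkeeping — in particular the fact that one must pass to the full localization $R_Q=S_{QS}$ rather than argue in $S$ itself, where the desired equality can fail (e.g.\ $f=1+X$ in $k[X,Y]$) — and the verification that the principal generator can be taken to be an actual homogeneous component of $f$; everything else is a formal assembly of the earlier lemmas.
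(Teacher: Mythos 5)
Your proof is correct and follows essentially the route the paper takes by deferring to \cite[Corollary 1.9]{AC}: the equivalence $(1)\Leftrightarrow(2)$ comes from Lemma \ref{inv}, $(3)\Rightarrow(2)$ is trivial, and $(1)\Rightarrow(3)$ is checked locally at the maximal ideals $QR_{N_{\star}(H)}$ with $Q\in h$-$\QMax^{\widetilde{\star}}(R)$, where $C(f)$ becomes principal generated by a homogeneous component $f_{\alpha_j}$ of $f$ and $u=f/f_{\alpha_j}$ is a unit of $R_Q$ because its degree-zero component equals $1$. The one step you quote without proof---that an invertible homogeneous ideal of the graded-quasilocal domain $R_{H\backslash Q}$ is principal with a homogeneous generator---is a standard short argument (write $1=\sum a_kb_k$ with $a_k\in C(f)R_{H\backslash Q}$ and $b_k\in (C(f)R_{H\backslash Q})^{-1}$ homogeneous of opposite degrees; some product $a_kb_k$ is a degree-zero homogeneous element outside $QR_{H\backslash Q}$, hence a unit, and then $C(f)R_{H\backslash Q}=a_kR_{H\backslash Q}$), so there is no genuine gap.
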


\begin{proof} Exactly is the same as \cite[Corollary 1.9]{AC}.
\end{proof}

Let $\mathbb{Z}$ be the additive group of integers. Clearly, the
direct sum $\Gamma\oplus\mathbb{Z}$ of $\Gamma$ with $\mathbb{Z}$ is
a torsionless grading monoid. So if $y$ is an indeterminate over
$R=\bigoplus_{\alpha\in\Gamma}R_{\alpha}$, then $R[y,y^{-1}]$ is a
graded integral domain graded by $\Gamma\oplus\mathbb{Z}$. In the
following proposition we use a technique for defining semistar
operations on integral domains, due to Chang and Fontana
\cite[Theorem 2.3]{CF1}.

\begin{prop}\label{main} Let $R=\bigoplus_{\alpha\in\Gamma}R_{\alpha}$ be a graded integral domain with quotient field $K$, let $y$, $X$ be two indeterminates
over $R$ and let $\star$ be a semistar operation on $R$ such that
$R^{\star}\subsetneq R_H$. Set $T:=R[y,y^{-1}]$, $K_1:=K(y)$ and
take the following subset of $\Spec(T)$:
$$\triangle^{\star}:=\{Q\in\Spec(T)|\text{ }Q\cap R=(0)\text{ or }Q=(Q\cap R)R[y,y^{-1}]\text{ and }(Q\cap R)^{\star_f}\subsetneq R^{\star}\}.$$
Set $S^{\star}:=T[X]\backslash(\bigcup\{Q[X]
|Q\in\triangle^{\star}\})$ and:
$$E^{\star\prime}:=E[X]_{S^{\star}}\cap
K_1, \text{   for all }E\in \overline{\mathcal{F}}(T).$$

\begin{itemize}
\item[(a)] The mapping $\star\prime:
\overline{\mathcal{F}}(T)\to\overline{\mathcal{F}}(T)$, $E\mapsto
E^{\star\prime}$ is a stable semistar operation of finite type on
$T$, i.e., $\widetilde{\star\prime}=\star\prime$.

\item[(b)] $(\widetilde{\star})\prime=(\star_f)\prime=\star\prime$.

\item[(c)] $(ER[y,y^{-1}])^{\star\prime}\cap K=E^{\widetilde{\star}}$ for all $E\in\overline{\mathcal{F}}(R)$.

\item[(d)] $(ER[y,y^{-1}])^{\star\prime}=E^{\widetilde{\star}}R[y,y^{-1}]$ for all $E\in\overline{\mathcal{F}}(R)$.

\item[(e)] $T^{\star\prime}\subsetneq T_{H'}$, where $H'$ is the set of nonzero homogeneous elements of $T$, and $\star\prime$ sends
homogeneous fractional ideals to homogeneous ones.

\item[(f)] $\QMax^{\star\prime}(T)=\{Q|Q\in\Spec(T)\text{ such that
}Q\cap R=(0)\text{ and
}c_R(Q)^{\star_f}=R^{\star}\}\cup\{PR[y,y^{-1}]|P\in\QMax^{\star_f}(R)\}$.

\item[(g)] $h$-$\QMax^{\star\prime}(T)=\{PR[y,y^{-1}]|P\in h\text{-}\QMax^{\widetilde{\star}}(R)\}$.

\item[(h)] $(w_R)\prime=(t_R)\prime=(v_R)\prime=w_T$.
\end{itemize}
\end{prop}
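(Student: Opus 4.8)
The three equalities on the left are formal consequences of part~(b). I would apply (b) to the semistar operation $\star=v_R$; this is permissible because $R^{v_R}=R$, and the inclusion $R\subsetneq R_H$ is already forced by the standing hypothesis $R^{\star}\subsetneq R_H$ (since $R\subseteq R^{\star}\subsetneq R_H$). As $\widetilde{v_R}=w_R$ and $(v_R)_f=t_R$, part~(b) gives $(w_R)\prime=(\widetilde{v_R})\prime=(v_R)\prime$ and $(t_R)\prime=((v_R)_f)\prime=(v_R)\prime$. Hence all three coincide, and only the equality $(v_R)\prime=w_T$ remains.

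The plan for this last equality is to compare quasi-maximal spectra. Both operations are stable and of finite type on $T$: the operation $(v_R)\prime$ by part~(a), and $w_T=\widetilde{v_T}$ by construction. For any stable finite-type semistar operation $\sigma$ on $T$ one has $\sigma=\widetilde{\sigma}$ and $\QMax^{\sigma_f}(T)=\QMax^{\sigma}(T)$, so the intersection formula recalled in the introduction yields $E^{\sigma}=\bigcap\{ET_P\mid P\in\QMax^{\sigma}(T)\}$ for every $E\in\overline{\mathcal{F}}(T)$. Thus a stable finite-type operation is determined by its quasi-maximal spectrum, and it suffices to prove $\QMax^{(v_R)\prime}(T)=\QMax^{w_T}(T)$. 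Now $\QMax^{w_T}(T)=\QMax^{t_T}(T)$ is exactly the set of maximal $t$-ideals of $T$, while part~(f) applied to $\star=v_R$ (so that $\star_f=t_R$ and $R^{\star}=R$) gives
$$\QMax^{(v_R)\prime}(T)=\{Q\in\Spec(T)\mid Q\cap R=(0),\ c_R(Q)^{t_R}=R\}\cup\{PR[y,y^{-1}]\mid P\in\QMax^{t_R}(R)\}.$$

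Everything therefore reduces to the classical description of the maximal $t$-ideals of the Laurent polynomial ring $T=R[y,y^{-1}]$: that they are precisely the uppers to zero $Q$ with $c_R(Q)^{t_R}=R$, together with the extended ideals $PR[y,y^{-1}]$ where $P$ ranges over $\QMax^{t_R}(R)$, the maximal $t$-ideals of $R$. I would derive this by first invoking the known structure of the maximal $t$-ideals of the ordinary polynomial ring $R[y]$ — namely the uppers to zero of full $t$-content and the ideals $P[y]$ with $P$ a maximal $t$-ideal of $R$ — and then localizing at $S=\{y^{n}\}$ to pass to $T=R[y]_{S}$: among these, only the upper to zero $(y)$ meets $S$ and is discarded, while the remaining two families avoid $S$ and correspond bijectively to the maximal $t$-ideals of $T$.

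I expect the transfer of the $t$-maximal structure from $R[y]$ to $R[y,y^{-1}]$ to be the main obstacle, because the $t$-operation does not in general commute with localization. The argument must be carried out with the content formulas of \cite[Lemma 1.1]{AC}, which control $c_R$ under products, rather than by a naive localization of $t$-ideals; concretely, one has to verify that both the property of being a maximal $t$-ideal and the content condition $c_R(Q)^{t_R}=R$ are preserved under contraction and extension along $R\hookrightarrow R[y]\hookrightarrow T$. Note that part~(g) alone does not suffice here, since it describes only the homogeneous quasi-maximal ideals, whereas $w_T$ is the ordinary $w$-operation on $T$ and its quasi-maximal spectrum also contains the non-homogeneous uppers to zero.
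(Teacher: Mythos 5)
Your treatment of the three left-hand equalities and the overall reduction coincides with the paper's own proof of part (h): the paper likewise specializes to the case $\star_f=t_R$ (legitimate, as you note, since $R^{v_R}=R\subsetneq R_H$), likewise uses that $\star\prime$ and $w_T$ are stable operations of finite type, hence each is recovered from its quasi-maximal spectrum via $E^{\sigma}=\bigcap\{ET_P\mid P\in\QMax^{\sigma}(T)\}$, and likewise invokes part (f) to reduce everything to the single identity
$\QMax^{t_T}(T)=\{Q\in\Spec(T)\mid Q\cap R=(0),\ c_R(Q)^{t_R}=R\}\cup\{PR[y,y^{-1}]\mid P\in\QMax^{t_R}(R)\}$.
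Your remark that part (g) cannot substitute for this identity (because $w_T$ also has non-homogeneous quasi-maximal ideals) is correct, and your reliance on (a), (b), (f) as inputs is the same as the paper's; note, though, that you address only part (h) and say nothing about (a)--(g), which the paper proves via \cite{CF1}, Proposition \ref{pp}, and a direct argument for (g).

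The problem is that the displayed identity is where all the actual work of part (h) lies, and you do not prove it: you propose to deduce it from the classical description of $\QMax^{t}(R[y])$ by localizing at $S=\{y^n\}$, and you yourself flag this transfer as ``the main obstacle'' without resolving it. That is a genuine gap, not a detail. Extension of ($t$-maximal) $t$-ideals along a localization fails for general multiplicative sets, so the bijection you want between $t$-maximal ideals of $R[y]$ avoiding $y$ and $t$-maximal ideals of $T$ needs a real argument; it can in fact be rescued, because $S$ is generated by the prime element $y$ and is therefore a splitting set, but none of that machinery appears in your sketch, and appealing to the content formulas of \cite[Lemma 1.1]{AC} alone will not produce it. The paper avoids the issue entirely by arguing directly in $T=R[y,y^{-1}]$, never passing through $R[y]$: for a quasi-$t_T$-maximal ideal $M$ with $M\cap R\neq(0)$ it cites \cite[Proposition 1.1]{HZ} to get $M=(M\cap R)[y,y^{-1}]$ with $M\cap R\in\QMax^{t_R}(R)$; for an upper to zero $Q$ it argues that if $c_R(Q)^{t_R}\subsetneq R$ then $c_R(Q)\subseteq P$ for some $P\in\QMax^{t_R}(R)$, so $Q\subseteq PR[y,y^{-1}]$ and maximality forces $Q=PR[y,y^{-1}]$, contradicting $Q\cap R=(0)$; and conversely, if $c_R(Q)^{t_R}=R$ and $Q\subsetneq M$ with $M$ quasi-$t_T$-maximal, then $M\cap R\neq(0)$ (distinct uppers to zero cannot be nested), whence $c_R(Q)\subseteq M\cap R$, contradicting $c_R(Q)^{t_R}=R$. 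Until your localization transfer is actually carried out, your argument establishes only the reduction, not part (h) itself.
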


\begin{proof} Set
$\nabla^{\star}:=\{Q\in\Spec(T)|\text{ }Q\cap R=(0)$ and
$c_D(Q)^{\star_f}=R^{\star}$ or $Q=PR[y,y^{-1}]$ and
$P\in\QMax^{\star_f}(D)\}.$ Then it is easy to see that the elements
of $\nabla^{\star}$ are the maximal elements of $\triangle^{\star}$
(see proof of \cite[Theorem 2.3]{CF1}). Thus
$$S^{\star}:=T[X]\backslash(\bigcup\{Q[X]
|Q\in\triangle^{\star}\})=T[X]\backslash(\bigcup\{Q[X]
|Q\in\nabla^{\star}\}).$$

(a) It follows from \cite[Theorem 2.1 (a) and (b)]{CF1}, that
$\star\prime$ is a stable semistar operation of finite type on $T$.

(b) Since $\QMax^{\star_f}(D)=\QMax^{\widetilde{\star}}(D)$, the
conclusion follows easily from the fact that
$S^{\widetilde{\star}}=S^{\star_f}=S^{\star}$.

(c) and (d) Exactly are the same as proof of \cite[Theorem 2.3(c)
and (d)]{CF1}.

(e) From part (d) we have
$T^{\star\prime}=R^{\widetilde{\star}}R[y,y^{-1}]\subsetneq
R_HR[y,y^{-1}]=T_{H'}$. The second assertion follows from
Proposition \ref{pp}, since $\widetilde{\star\prime}=\star\prime$ by
(a).

(f) Follows from \cite[Theorem 2.1(e)]{CF1} and the remark in the
first paragraph in the proof.

(g) Let $M\in h$-$\QMax^{\star\prime}(T)$. Since $y,y^{-1}\in T$,
clearly we have $M\cap R\neq(0)$. Then by (f), there is $P\in
\QMax^{\star_f}(R)$ such that $M\subseteq PR[y,y^{-1}]$. If $P\in
h$-$\QMax^{\widetilde{\star}}(R)$, then $M=PR[y,y^{-1}]$ and we are
done. So suppose that $P\notin h$-$\QMax^{\widetilde{\star}}(R)$.
Then note that $P_h\in h$-$\QSpec^{\widetilde{\star}}(R)$ and
$M\subseteq P_hR[y,y^{-1}]=(PR[y,y^{-1}])_h$; hence
$M=P_hR[y,y^{-1}]$, because $M$ is a homogeneous maximal
quasi-$\star\prime$-ideal. Note that in this case $P_h\in
h$-$\QMax^{\widetilde{\star}}(R)$ by \cite[Lemma 4.1, Remark
4.5]{FH}. So that $M\in\{PR[y,y^{-1}]|P\in
h\text{-}\QMax^{\widetilde{\star}}(R)\}$. The other inclusion is
trivial.

(h) Suppose that $\star_f=t$. Note that if
$M\in\QMax^{\star\prime}(T)$, and $M\cap R\neq(0)$, then, $M=(M\cap
R)[y,y^{-1}]$ and $M\cap R\in\QMax^t(R)$ (cf. \cite[Proposition
1.1]{HZ}). Moreover, if $Q\in \Spec(T)$ is such that $Q\cap R=(0)$,
then $Q$ is a quasi-$t$-maximal ideal of $T$ if and only if
$c_R(Q)^t=R$. Indeed, if $Q$ is a quasi-$t$-maximal ideal of $T$,
and $c_R(Q)^t\subsetneq R$, then there exists a quasi-$t$-maximal
ideal $P$ of $R$ such that $c_R(Q)^t\subseteq P$. Hence $Q\subseteq
P[y,y^{-1}]$, and therefore $Q=P[y,y^{-1}]$. Consequently $(0)=Q\cap
R=P[y,y^{-1}]\cap R=P$ which is a contradiction. Conversely assume
that $c_R(Q)^t=R$. Suppose $Q$ is not a quasi-$t$-maximal ideal of
$T$, and let $M$ be a quasi-$t$-maximal ideal of $T$ which contains
$Q$. Since the containment is proper, we have $M\cap R\neq(0)$. Thus
$M=(M\cap R)[y,y^{-1}]$ and $M\cap R\in\QMax^t(R)$ (cf.
\cite[Proposition 1.1]{HZ}). Since $Q\subseteq M$, $c_R(Q)$ is
contained in the quasi-$t$-ideal $M\cap R$, so that $c_R(Q)^t\neq R$
which is a contradiction. Thus we showed that
$\QMax^t(T)=\{Q|Q\in\Spec(T)\text{ such that }Q\cap R=(0)\text{ and
}c_R(Q)^{\star_f}=R^{\star}\}\cup\{PR[y,y^{-1}]|P\in\QMax^{\star_f}(R)\}=\QMax^{\star\prime}(T)$,
where the second equality is by (f). Thus using (a) and (b), we
obtain $(w_R)\prime=(t_R)\prime=(v_R)\prime=w_T$.
\end{proof}

It is known that $Pic(D(X))=0$ \cite[Theorem 2]{A}. More generally,
if $*$ is a star operation on $D$, then $Pic(\Na(D,*))=0$,
\cite[Theorem 2.14]{Kang}. Also in the graded case it is shown in
\cite[Theorem 3.3]{AC}, that $Pic(R_{N_v(H)})=0$, where
$R=\bigoplus_{\alpha\in\Gamma}R_{\alpha}$ is a graded integral
domain containing a unit of nonzero degree. We next show in general
that $Pic(R_{N_{\star}(H)})=0$.

\begin{thm}\label{p=0} Let $R=\bigoplus_{\alpha\in\Gamma}R_{\alpha}$ be a
graded integral domain with a unit of nonzero degree, and $\star$ be
a semistar operation on $R$ such that $R^{\star}\subsetneq R_H$.
Then $Pic(R_{N_{\star}(H)})=0$.
\end{thm}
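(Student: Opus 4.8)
The plan is to prove directly that every invertible ideal of $A:=R_{N_{\star}(H)}$ is principal. Since $R$ has a unit of nonzero degree it satisfies property $(\#_{\star})$, so by Lemma \ref{sharp}(2) the maximal ideals of $A$ are exactly the $QA$ with $Q\in h\text{-}\QMax^{\star_f}(R)$, and (as in the proof of Lemma \ref{inv}) $A_{QA}=R_Q$. An invertible fractional ideal $\mathcal{I}$ is finitely generated; multiplying by a suitable nonzero element of $R$ (which alters its class only by a principal ideal) and clearing denominators against $N_{\star}(H)$, I may assume $\mathcal{I}\subseteq A$ is integral with $\mathcal{I}=(b_1,\dots,b_n)A$ and $b_1,\dots,b_n\in R$. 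It then suffices to produce a single $f\in R$ with $\mathcal{I}=fA$.

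Next I would homogenize the generators using the unit of nonzero degree. Let $u$ be a unit with $\deg u=e\neq 0$; since $\langle\Gamma\rangle$ is torsionfree, $e$ has infinite order, so the finite homogeneous supports of $b_1,\dots,b_n$ can be translated apart: choose $k_1,\dots,k_n\in\mathbb{Z}$ so that the supports of $u^{k_1}b_1,\dots,u^{k_n}b_n$ are pairwise disjoint, and set $f:=\sum_{i=1}^{n}u^{k_i}b_i$. Because each $u^{k_i}$ is a unit of $R$, replacing $b_i$ by $u^{k_i}b_i$ changes neither $\mathcal{I}$ nor $C(b_i)$; hence $f\in\mathcal{I}$, the content $C(f)=\sum_{i=1}^{n}C(b_i)$ is a nonzero finitely generated homogeneous ideal, and $fA\subseteq\mathcal{I}\subseteq C(f)A$.

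It then remains to prove $\mathcal{I}=fA$, which I would verify locally at each maximal ideal $QA$. Over the local ring $A_{QA}=R_Q$ the ideal $\mathcal{I}R_Q$ is invertible, hence principal; writing a generator as $\theta=\sum_i r_i b_i$ and each $b_i=s_i\theta$ gives $\sum_i r_i s_i=1$, so (local ring) some $s_{i_0}$ is a unit and $\mathcal{I}R_Q=b_{i_0}R_Q$. Thus $b_i=c_i b_{i_0}$ with $c_i\in R_Q$ and $c_{i_0}=1$, so $f=b_{i_0}s$ with $s=\sum_i u^{k_i}c_i\in R_Q$. If $s$ is a unit of $R_Q$ then $fR_Q=b_{i_0}R_Q=\mathcal{I}R_Q$; as $fA\subseteq\mathcal{I}$ and this holds at every maximal ideal, $\mathcal{I}=fA$ and $Pic(A)=0$. (Alternatively one transcribes the classical proofs that $Pic(D(X))=0$ \cite{A} and $Pic(\Na(D,*))=0$ \cite{Kang}, with $N_{\star}(H)$ and the distinguished term $u^{k_{i_0}}b_{i_0}$ of $f$ playing the roles of the content multiplicative set and the leading term of a polynomial.)

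The main obstacle is exactly this last point: showing that $s$ is a unit of $R_Q$, i.e. that the homogenized $f$ still generates $\mathcal{I}$ after passing to the \emph{non-graded} localization $R_Q$. Writing $c_i=p_i/q$ with common denominator $q\in R\backslash Q$ and $p_{i_0}=q$, one has $s=\bigl(\sum_i u^{k_i}p_i\bigr)/q$, and $s\notin QR_Q$ is equivalent to $\sum_i u^{k_i}p_i\notin Q$. Since $q=p_{i_0}\notin Q$, some homogeneous component of the $i_0$-term $u^{k_{i_0}}q$ lies outside the homogeneous ideal $Q$; the delicate step is to guarantee that this component is not cancelled by the other terms, which requires a disjointness/degree-gap estimate on the supports of the $u^{k_i}p_i$. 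As the $p_i$ (and $q$) depend on $Q$, one must either arrange a uniform bound on these supports in terms of $b_1,\dots,b_n$ or refine the choice of the $k_i$ per maximal ideal. This survival-of-the-primitive-term argument — the graded analogue of the content estimate behind the classical Nagata-ring computation, and the place where the unit of nonzero degree is genuinely used — is the crux; everything else is formal once Lemmas \ref{sharp}(2) and \ref{inv} are available.
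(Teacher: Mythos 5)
Your proposal is incomplete, and the gap is exactly the one you flag in your last paragraph: the verification that the staggered element $f=\sum_i u^{k_i}b_i$ still generates $\mathcal{I}$ after localizing at $QR_{N_{\star}(H)}$. The obstruction is structural, not merely technical. The exponents $k_i$ must be fixed once and for all (they define $f$) before any maximal ideal $Q$ is considered, whereas the witnesses $p_i,q$ in $c_i=p_i/q$ depend on $Q$ and have homogeneous supports that are finite but unbounded subsets of $\langle\Gamma\rangle$, with no bound expressible in terms of $b_1,\dots,b_n$. Since the shifts $k_ie$ lie in the \emph{same} group $\langle\Gamma\rangle$ as those supports, nothing prevents the component of $u^{k_{i_0}}q$ witnessing $q\notin Q$ from being cancelled by components of the other $u^{k_i}p_i$; neither a uniform bound nor a per-$Q$ refinement of the $k_i$ is available, since $f$ cannot depend on $Q$. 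Note the contrast with the classical proofs of $Pic(D(X))=0$ \cite{A} and $Pic(\Na(D,*))=0$ \cite{Kang} that you propose to transcribe: there the staggering direction (the $X$-degree) is transcendental over the field containing the local ratios, so those ratios have no component in that direction and cancellation is impossible. Inside $R$, multiplication by $u^{k_i}$ shifts within $\Gamma$ itself and this independence is lost; the transcription is precisely the step that fails.

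The paper's proof resolves this by adjoining a genuinely fresh direction rather than reusing $u$. It sets $T:=R[y,y^{-1}]$, graded by $\Gamma\oplus\mathbb{Z}$, equips $T$ with the semistar operation $\star\prime$ of Proposition \ref{main}, and uses Proposition \ref{main}(e),(g) together with Lemma \ref{sharp} to obtain the identification $T_{N_{\star\prime}(H)}=(R_{N_{\star}(H)})(y)$, the classical Nagata ring of $R_{N_{\star}(H)}$; it then follows \cite[Theorem 3.3]{AC}. The point is that in $T$ the staggered element $g=\sum_i b_iy^{k_i}$ behaves: at a maximal ideal coming from $Q[y,y^{-1}]$, the local ratios $c_i=b_i/b_{i_0}$ lie in $R_Q$ and so have no $y$-component, hence $\sum_i c_iy^{k_i}$ has $c_{i_0}=1$ as an honest coefficient of $y^{k_{i_0}}$ and is a unit of $T_{Q[y,y^{-1}]}$ --- cancellation cannot occur because $y$ is transcendental over the quotient field of $R$. (Equivalently, one may simply invoke $Pic(D(y))=0$ for $D=R_{N_{\star}(H)}$ \cite{A}.) This makes $\mathcal{I}(R_{N_{\star}(H)})(y)$ principal with a generator in $T$, whose homogeneous $T$-content is $IT$ for a finitely generated homogeneous ideal $I$ of $R$; contracting back gives $\mathcal{I}=IR_{N_{\star}(H)}$, and then Lemma \ref{inv} and Corollary \ref{C(f)} convert $I$ into a single generator. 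So the missing idea in your proposal is the passage to $R[y,y^{-1}]$ --- Proposition \ref{main} exists in the paper exactly to enable this --- while everything you do before the local step is sound and consistent with that argument.
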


\begin{proof} Let $y$ be an
indeterminate over $R$, and $T=R[y,y^{-1}]$. Using Proposition
\ref{main}(e) and (g) and Lemma \ref{sharp}, we deduce that
$\Max(T_{N_{\star\prime}(H)})=\{QT_{N_{\star\prime}(H)}|Q\in
h$-$\QMax^{\star_f}(R)\}$. Next since
$\Max((R_{N_{\star}(H)})(y))=\{P(y)|P$ is a maximal ideal of
$R_{N_{\star}(H)}\}$, \cite[Proposition 33.1]{G}, we have
$\Max((R_{N_{\star}(H)})(y))=\{(QR_{N_{\star}(H)})(y)|Q\in
h$-$\QMax^{\star_f}(R)\}$. Thus by a computation similar to the
proof of \cite[Lemma 3.2]{AC}, we obtain the equality
$T_{N_{\star\prime}(H)}=(R_{N_{\star}(H)})(y)$. The rest of the
proof is exactly the same as proof of \cite[Theorem 3.3]{AC}, using
Proposition \ref{main}.
\end{proof}

Let $D$ be a domain and $T$ an overring of $D$. Let $\star$ and
$\star'$ be semistar operations on $D$ and $T$, respectively. One
says that $T$ is \emph{$(\star,\star')$-linked to} $D$ (or that $T$
is a $(\star,\star')${\it -linked overring of} $D$) if
$$F^{\star}=D^{\star}\Rightarrow (FT)^{\star'}=T^{\star'}$$ for each nonzero finitely generated ideal $F$ of
$D$. (The preceding definition generalizes the notion of
``$t$-linked overring" which was introduced in \cite{DHLZ}.) It is
shown in \cite[Theorem 3.8]{EF}, that $T$ is a
$(\star,\star')$-linked overring of $D$ if and only if
$\Na(D,\star)\subseteq\Na(T,\star')$. We need a graded analogue of
linkedness.

Let $R=\bigoplus_{\alpha\in\Gamma}R_{\alpha}$ be a graded integral
domain, and $T$ be a homogeneous overring of $R$. Let $\star$ and
$\star'$ be semistar operations on $R$ and $T$, respectively. We say
that \emph{$T$ is homogeneously $(\star,\star')$-linked overring of
$R$} if
$$F^{\star}=D^{\star}\Rightarrow (FT)^{\star'}=T^{\star'}$$ for each nonzero homogeneous finitely generated ideal $F$ of
$R$. We say that \emph{$T$ is homogeneously $t$-linked overring of
$R$} if $T$ is homogeneously $(t,t)$-linked overring of $R$. Also it
can be seen that $T$ is homogeneously $(\star,\star')$-linked
overring of $R$ if and only if $T$ is homogeneously
$(\widetilde{\star},\widetilde{\star'})$-linked overring of $R$ (cf.
\cite[Theorem 3.8]{EF}).

\begin{exam}\label{el} Let $R=\bigoplus_{\alpha\in\Gamma}R_{\alpha}$ be a
graded integral domain, and let $\star$ be a semistar operation on
$R$ such that $R^{\star}\subsetneq R_H$. Let $P\in
h$-$\QSpec^{\widetilde{\star}}(R)$. Then, $R_{H\backslash P}$ is a
homogeneously $(\star,\star')$-linked overring of $R$, for all
semistar operation $\star'$ on $R_{H\backslash P}$. Indeed assume
that $F$ is a nonzero finitely generated homogeneous ideal of $R$
such that $F^{\star}=R^{\star}$. Then we have
$F^{\widetilde{\star}}=R^{\widetilde{\star}}$. Thus using
Proposition \ref{tilda}, we have $FR_{H\backslash
P}=F^{\widetilde{\star}}R_{H\backslash
P}=R^{\widetilde{\star}}R_{H\backslash P}=R_{H\backslash P}$.
\end{exam}

\begin{lem}\label{link} Let $R=\bigoplus_{\alpha\in\Gamma}R_{\alpha}$ be a
graded integral domain with a unit of nonzero degree, and let $T$ be
a homogeneous overring of $R$. Let $\star$ (resp. $\star'$) be a
semistar operation on $R$ (resp. on $T$). Then, $T$ is a
homogeneously $(\star,\star')$-linked overring of $R$ if and only if
$R_{N_{\star}(H)}\subseteq T_{N_{\star'}(H)}$.
\end{lem}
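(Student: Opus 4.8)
The plan is to prove the two implications separately, in the spirit of the non-graded statement \cite[Theorem 3.8]{EF}, but working with homogeneous contents throughout and under the paper's standing convention that $\star'$ satisfies $T^{\star'}\subsetneq T_{H'}=R_H$ and sends homogeneous fractional ideals to homogeneous ones. The elementary fact I would isolate first is a \emph{content identity}: since $T$ is a homogeneous overring, we have $R_\alpha\subseteq T_\alpha=T\cap(R_H)_\alpha$, so for $s\in R$ the homogeneous components of $s$ computed in $T$ are the same elements as those computed in $R$; hence $C_T(s)=C_R(s)T$. Note also that both $R_{N_{\star}(H)}$ and $T_{N_{\star'}(H)}$ are overrings of $R$ inside the common quotient field, so the asserted relation is literally a containment of subrings.

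For the forward implication, assume $T$ is homogeneously $(\star,\star')$-linked to $R$. It suffices to show that every $s\in N_{\star}(H)$ becomes a unit in $T_{N_{\star'}(H)}$, for then the universal property of localization yields $R_{N_{\star}(H)}\subseteq T_{N_{\star'}(H)}$. Given such $s$, the ideal $C_R(s)$ is a nonzero finitely generated homogeneous ideal with $C_R(s)^{\star}=R^{\star}$; linkedness applied to $F=C_R(s)$ gives $(C_R(s)T)^{\star'}=T^{\star'}$, and the content identity $C_R(s)T=C_T(s)$ then yields $C_T(s)^{\star'}=T^{\star'}$, i.e. $s\in N_{\star'}(H)$. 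Thus $s$ is invertible in $T_{N_{\star'}(H)}$, as required. I note this direction does not use the hypothesis of a unit of nonzero degree.

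For the converse, assume $R_{N_{\star}(H)}\subseteq T_{N_{\star'}(H)}$ and let $F$ be a nonzero finitely generated homogeneous ideal of $R$ with $F^{\star}=R^{\star}$. Since $F$ is homogeneous we have $C(F)=F$, so $C(F)^{\star}=R^{\star}$; here I invoke the unit of nonzero degree, which forces property $(\#_{\star})$, to produce $f\in F$ with $C(f)^{\star}=R^{\star}$, that is $f\in F\cap N_{\star}(H)$. Consequently $FR_{N_{\star}(H)}=R_{N_{\star}(H)}$, and extending along the inclusion gives $(FT)T_{N_{\star'}(H)}=FT_{N_{\star'}(H)}=T_{N_{\star'}(H)}$. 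Now $FT$ is a finitely generated homogeneous ideal of $T$; if it were contained in some $Q\in h\text{-}\QMax^{\star'_f}(T)$, then since $Q\cap N_{\star'}(H)=\emptyset$ by Lemma \ref{sharp}(1) applied to $(T,\star')$, the ideal $(FT)T_{N_{\star'}(H)}$ would be proper, a contradiction. Hence $FT\nsubseteq Q$ for every $Q\in h\text{-}\QMax^{\star'_f}(T)$, and the remark following Example \ref{e} (applied to $T$) gives $(FT)^{\star'}=(FT)^{\star'_f}=T^{\star'}$.

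The main obstacle is the converse direction, and within it the passage from the ideal-level hypothesis $F^{\star}=R^{\star}$ to the existence of a single element $f\in F\cap N_{\star}(H)$ witnessing $FR_{N_{\star}(H)}=R_{N_{\star}(H)}$: without property $(\#_{\star})$ this step fails, which is exactly why the unit of nonzero degree is needed. The content identity $C_T(s)=C_R(s)T$, though elementary, is the other load-bearing ingredient, since it is what couples the two graded Nagata rings through the linkedness condition; I would state and verify it carefully at the outset before running either implication.
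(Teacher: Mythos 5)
Your forward direction coincides with the paper's own argument (the content identity $C_T(s)=C_R(s)T$ plus the definition of linkedness shows $N_{\star}(H)\subseteq N_{\star'}(H)$), and it is correct. The problem is the last step of your converse. To pass from $(FT)T_{N_{\star'}(H)}=T_{N_{\star'}(H)}$ to $(FT)^{\star'}=T^{\star'}$ you apply Lemma \ref{sharp}(1) and the remark following Example \ref{e} to the pair $(T,\star')$; those results require that $\star'$ send homogeneous fractional ideals of $T$ to homogeneous ones and that $T^{\star'}$ be properly contained in the homogeneous quotient field. Neither is a hypothesis of Lemma \ref{link}, and the ``standing convention'' you invoke does not cover $\star'$: the paper imposes it on the semistar operation $\star$ on the base ring $R$, while operations on overrings are left arbitrary (see Example \ref{el}, which is stated ``for all semistar operations $\star'$ on $R_{H\backslash P}$''). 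This matters for how the lemma is used: in the proof of Theorem \ref{NKP2} it is applied to \emph{every} homogeneously $(\star,t)$- or $(\star,d)$-linked overring $T$, and $T=R_H$ is such an overring with $T^{t_T}=T^{d_T}=T=T_{H'}$, so your extra assumption fails in cases the lemma must cover. Moreover Example \ref{e} shows the machinery you quote genuinely breaks when homogeneity-preservation fails (there $h$-$\QMax^{\star_f}(R)=\emptyset$ while proper homogeneous ideals $I$ with $I^{\star_f}\neq R^{\star}$ exist), so this is not a removable formality in your argument as written.

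The gap is easy to close, with no hypothesis on $\star'$ at all. Your element $f\in F\cap N_{\star}(H)$ is a unit in $R_{N_{\star}(H)}\subseteq T_{N_{\star'}(H)}$; since $N_{\star'}(H)$ is a \emph{saturated} multiplicative subset of $T$ for an arbitrary semistar operation (the computation the paper gives right after defining $N_{\star}(H)$, which uses only the content formula from [AC, Lemma 1.1], applies verbatim to the graded domain $T$), it follows that $f\in N_{\star'}(H)$, i.e. $C_T(f)^{\star'}=T^{\star'}$. Since $F$ is homogeneous and $f\in F$, we have $C_R(f)\subseteq F$, hence $C_T(f)=C_R(f)T\subseteq FT$, and monotonicity gives $T^{\star'}=C_T(f)^{\star'}\subseteq (FT)^{\star'}\subseteq T^{\star'}$. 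This repaired argument is essentially the paper's proof: the paper uses the unit of nonzero degree to choose $f$ with $C_R(f)=F$ \emph{exactly} (the same degree-shifting trick that proves property $(\#_{\star})$, which you used one level removed), and then concludes $(FT)^{\star'}=C_T(f)^{\star'}=T^{\star'}$ directly from saturation. So your proof is the paper's proof with one unnecessary, hypothesis-laden detour through $h$-$\QMax^{\star'_f}(T)$; replace that detour by the saturation argument and it is correct in the stated generality.
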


\begin{proof} Let $f\in R$ such that $C_R(f)^{\star}=R^{\star}$.
Then by assumption $C_T(f)^{\star'}=(C_R(f)T)^{\star'}=R^{\star'}$.
Hence $R_{N_{\star}(H)}\subseteq T_{N_{\star'}(H)}$. Conversely let
$F$ be a nonzero homogeneous finitely generated ideal of $R$ such
that $F^{\star}=R^{\star}$. Since $R$ has a unit of nonzero degree
we can choose an element $f\in R$ such that $C_R(f)=F$. From the
fact that $C_R(f)^{\star}=R^{\star}$, we have that $f$ is a unit in
$R_{N_{\star}(H)}$ and so by assumption, $f$ is a unit in
$T_{N_{\star'}(H)}$. This implies that
$C_T(f)^{\star'}=(C_R(f)T)^{\star'}=T^{\star'}$, i.e.,
$(FT)^{\star'}=T^{\star'}$.
\end{proof}

\section{Kronecker function ring}

Let $R=\bigoplus_{\alpha\in\Gamma}R_{\alpha}$ be a graded integral
domain, $*$ an \texttt{e.a.b.} star operation on $R$. The graded
analogue of the well known Kronecker function ring (see
\cite[Theorem 32.7]{G}) of $R$ with respect to $*$ is defined by
$$
\Kr(R,*):=\left\{\frac{f}{g}\bigg| \begin{array}{l} f,g\in R,\text{
}g\neq0,\text{ and }C(f)\subseteq C(g)^* \end{array} \right\}
$$
in \cite{AC}. The following lemma is proved in \cite[Theorems 2.9
and 3.5]{AC}, for an \texttt{e.a.b.} star operation $*$. We need to
state it for \texttt{e.a.b.} semistar operations. Since the proof is
exactly the same as star operation case, we omit the proof.

\begin{lem}\label{Kr} Let $R=\bigoplus_{\alpha\in\Gamma}R_{\alpha}$ be a
graded integral domain, $\star$ an \texttt{e.a.b.} semistar
operation on $R$, and
$$
\Kr(R,\star):=\left\{\frac{f}{g}\bigg| \begin{array}{l} f,g\in
R,\text{ }g\neq0,\text{ and }C(f)\subseteq C(g)^{\star} \end{array}
\right\}.
$$
Then
\begin{itemize}
\item[(1)] $\Kr(R,\star)$ is an integral domain.
\end{itemize}
In addition, if $R$ has a unit of nonzero degree, then,
\begin{itemize}
\item[(2)] $\Kr(R,\star)$ is a B\'{e}zout domain.
\item[(3)] $I\Kr(R,\star)\cap R_H=I^{\star}$ for every nonzero finitely
generated homogeneous ideal $I$ of $R$.
\end{itemize}
\end{lem}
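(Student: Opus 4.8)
The entire lemma is driven by a single multiplicative content identity, so the first thing I would prove is that $C(fg)^{\star}=(C(f)C(g))^{\star}$ for all nonzero $f,g\in R_H$. The inclusion $C(fg)\subseteq C(f)C(g)$ is automatic. For the reverse I would use the graded Dedekind--Mertens formula \cite[Lemma 1.1]{AC}, which supplies an integer $m\geq 1$ with $C(f)^{m+1}C(g)=C(f)^{m}C(fg)$; applying $\star$ to both sides and then cancelling the common finitely generated factor $C(f)^{m}$ by the \texttt{e.a.b.} property (all three ideals $C(f)^{m}$, $C(f)C(g)$, $C(fg)$ lie in $f(R)$) yields $(C(f)C(g))^{\star}=C(fg)^{\star}$. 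This step is the only genuinely delicate point — it is where \texttt{e.a.b.} is actually used — and once it is in hand everything else is formal.

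For (1) I would show that $\Kr(R,\star)$ is a subring of the quotient field $K$ of $R$; since $K$ is a field, any subring of it is a domain. It clearly contains $0$ and $1$ and is closed under negation because $C(-f)=C(f)$. Closure under multiplication is immediate from the content identity together with the standard identity $(A^{\star}B^{\star})^{\star}=(AB)^{\star}$: from $C(f_i)\subseteq C(g_i)^{\star}$ one gets $C(f_1f_2)\subseteq(C(f_1)C(f_2))^{\star}\subseteq(C(g_1)C(g_2))^{\star}=C(g_1g_2)^{\star}$. Closure under addition follows the same way, writing $C(f_1g_2+f_2g_1)\subseteq C(f_1g_2)+C(f_2g_1)$ and bounding each summand's content by $C(g_1g_2)^{\star}$.

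The key intermediate fact for (2) and (3) is that $C(f)\Kr(R,\star)=f\Kr(R,\star)$ for every $f\in R$: the inclusion $f\Kr\subseteq C(f)\Kr$ holds because $f$ is the sum of its homogeneous components, each lying in $C(f)$, while $C(f)\Kr\subseteq f\Kr$ holds because each homogeneous component $a$ of $f$ satisfies $C(a)=(a)\subseteq C(f)\subseteq C(f)^{\star}$, so $a/f\in\Kr$. Granting this, (2) is quick. A finitely generated ideal of $\Kr$ becomes, after multiplying by a suitable nonzero element of $R$ to clear denominators, one of the form $(f_1,\dots,f_n)\Kr$ with $f_i\in R$, and principality is unaffected by that scaling. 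Setting $F:=\sum_i C(f_i)$ and using the unit of nonzero degree to produce $h\in R$ with $C(h)=F$ — the construction already invoked in the proof of Lemma \ref{link} — I would then compute $(f_1,\dots,f_n)\Kr=\sum_i C(f_i)\Kr=F\Kr=C(h)\Kr=h\Kr$, so every finitely generated ideal is principal and $\Kr(R,\star)$ is a B\'ezout domain.

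Finally, for (3) I would write the finitely generated homogeneous ideal as $I=(a_1,\dots,a_n)$ with the $a_i$ homogeneous, pick $h$ with $C(h)=I$ as above, and use $I\Kr=C(h)\Kr=h\Kr$ to reduce the claim to $h\Kr\cap R_H=C(h)^{\star}$. For $\subseteq$, an element $x=h(f/g)\in R_H$ with $C(f)\subseteq C(g)^{\star}$ satisfies $(C(x)C(g))^{\star}=C(xg)^{\star}=C(hf)^{\star}=(C(h)C(f))^{\star}\subseteq(C(h)C(g))^{\star}$, so cancelling $C(g)$ by \texttt{e.a.b.} gives $C(x)\subseteq C(h)^{\star}$ and hence $x\in C(x)\subseteq I^{\star}$. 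For $\supseteq$, given $x\in I^{\star}=C(h)^{\star}$ I would write $x=p/q$ with $p,q\in R$ and compute $C(p)^{\star}=(C(x)C(q))^{\star}\subseteq(C(h)C(q))^{\star}=C(qh)^{\star}$, so that $x/h=p/(qh)\in\Kr$ and $x\in h\Kr$. The one point to keep in mind throughout (3) is the standing hypothesis that $\star$ sends homogeneous fractional ideals to homogeneous ones, which is what guarantees $I^{\star}\subseteq R_H$ and lets one pass from $x\in I^{\star}$ to $C(x)\subseteq I^{\star}$.
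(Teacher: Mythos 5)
Your proof is correct and follows essentially the same route as the proof the paper defers to (Anderson--Chang, Theorems 2.9 and 3.5): the graded Dedekind--Mertens formula plus \texttt{e.a.b.} cancellation to obtain $(C(f)C(g))^{\star}=C(fg)^{\star}$, the identity $f\Kr(R,\star)=C(f)\Kr(R,\star)$, and the unit of nonzero degree to realize any finitely generated homogeneous ideal as a content $C(h)$. You were also right to flag that part (3) silently relies on the paper's standing convention (declared just before Proposition \ref{pp}) that $\star$ sends homogeneous fractional ideals to homogeneous ones, which is exactly what guarantees $I^{\star}\subseteq R_H$ and lets one pass from $x\in I^{\star}$ to $C(x)\subseteq I^{\star}$ in the inclusion $I^{\star}\subseteq I\Kr(R,\star)\cap R_H$.
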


Inspired by the work of Fontana and Loper in \cite{FL2}, we can
generalize this definition of $\Kr(R,\star)$ to all semistar
operations on $R$ which send homogeneous fractional ideals, to
homogeneous ones, provided that $R$ has a unit of nonzero degree.
Before doing that we need a lemma.

\begin{lem}\label{o} Let $R=\bigoplus_{\alpha\in\Gamma}R_{\alpha}$ be a
graded integral domain, $\star$ a semistar operation on $R$ which
sends homogeneous fractional ideals to homogeneous ones. Suppose
that $a\in R$ is homogeneous and $B, F\in f(R)$, with $B$
homogeneous and $F\subseteq R_H$, such that $aF\subseteq
(BF)^{\star}$. Then there exists a homogeneous $T\in f(R)$ such that
$aT\subseteq (BT)^{\star}$.
\end{lem}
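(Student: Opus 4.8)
The plan is to take for $T$ the homogeneous fractional ideal generated by the homogeneous components of a fixed generating set of $F$, i.e. $T:=C(F)$, and to verify directly that it works. Writing $F=(y_1,\dots,y_n)$ with $y_i\in R_H$, each $y_i$ has only finitely many homogeneous components, so $T=\sum_{i=1}^n C(y_i)$ is a nonzero finitely generated homogeneous fractional ideal of $R$ (a common homogeneous denominator $s\in H$ for the $y_i$ clears every component into $R$, so $T$ is genuinely fractional). The two facts I need are: first, that $C(BF)=BT$, so that $(BF)^{\star}\subseteq(BT)^{\star}$ with $(BT)^{\star}$ homogeneous; and second, that membership of $aF$ in the homogeneous ideal $(BT)^{\star}$ forces all homogeneous components of $aF$ into $(BT)^{\star}$.

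For the first fact, since $B=(b_1,\dots,b_k)$ with the $b_l$ homogeneous, $BF$ is generated by the products $b_ly_j$, and $C(b_ly_j)=b_lC(y_j)$ precisely because $b_l$ is homogeneous; summing over $l,j$ gives $C(BF)=\sum_{l,j}b_lC(y_j)=B\,C(F)=BT$. Since every element of a subset of $R_H$ equals the sum of its homogeneous components, one has $BF\subseteq C(BF)=BT$, and monotonicity of $\star$ yields $(BF)^{\star}\subseteq(BT)^{\star}$. As $BT$ is homogeneous and $\star$ sends homogeneous fractional ideals to homogeneous ones, $(BT)^{\star}$ is itself a homogeneous fractional ideal.

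For the second fact I invoke the hypothesis: $aF\subseteq(BF)^{\star}\subseteq(BT)^{\star}$, so each $ay_i$ lies in the homogeneous ideal $(BT)^{\star}$; consequently every homogeneous component of $ay_i$ lies in $(BT)^{\star}$, that is $C(ay_i)\subseteq(BT)^{\star}$. Because $a$ is homogeneous, $C(ay_i)=aC(y_i)$, and therefore $aT=a\sum_i C(y_i)=\sum_i C(ay_i)\subseteq(BT)^{\star}$, which is exactly the required conclusion with $T$ homogeneous and in $f(R)$. The only genuinely delicate points are the content identity $C(BF)=B\,C(F)$, which rests on $B$ being homogeneous, and the passage from element-wise membership in a homogeneous ideal to content-wise membership; the latter is precisely where the standing hypothesis that $\star$ preserves homogeneity is decisive, and it is the crux of the argument. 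I expect this homogeneity-passing step to be the main (if mild) obstacle, and I note that the Dedekind--Mertens type identities of \cite[Lemma 1.1]{AC} are not needed for this particular lemma.
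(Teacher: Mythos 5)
Your proof is correct and follows essentially the same route as the paper's own argument: you choose the same $T=C(F)=\sum_i C(y_i)$ generated by the homogeneous components of the generators of $F$, observe that $BF\subseteq BT$ so that $(BF)^{\star}\subseteq(BT)^{\star}$ with $(BT)^{\star}$ homogeneous, and then extract components via $aC(y_i)=C(ay_i)\subseteq(BT)^{\star}$, exactly as in the paper. The only cosmetic difference is that you establish the stronger identity $C(BF)=BT$ (requiring homogeneous generators of $B$), whereas the paper only uses the inclusion $y_iB\subseteq\sum_j t_{ij}B$.
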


\begin{proof} Suppose that $F$ is generated
by $y_1,\cdots,y_n\in R_H$. Let $y_i=\sum t_{ij}$ be the
decomposition of $y_i$ to homogeneous elements for $i=1,\cdots,n$.
Then $ay_i\in(BF)^{\star}=(\sum y_iB)^{\star}\subseteq(\sum
t_{ij}B)^{\star}$. Since $(\sum t_{ij}B)^{\star}$ is homogeneous we
have $at_{ij}\in(\sum t_{ij}B)^{\star}$. Let $T$ be the fractional
ideal of $R$, generated by all homogeneous elements $t_{ij}$. So
that $aT\subseteq (BT)^{\star}$ and $T\in f(R)$ is homogeneous.
\end{proof}

\begin{thm}\label{p} Let $R=\bigoplus_{\alpha\in\Gamma}R_{\alpha}$ be a
graded integral domain with a unit of nonzero degree, $\star$ a
semistar operation on $R$ which sends homogeneous fractional ideals
to homogeneous ones, and
$$
\Kr(R,\star):=\left\{\frac{f}{g}\bigg| \begin{array}{l} f,g\in R,
g\neq0,\text{ and there is }0\neq h\in R
\\\text{ such that } C(f)C(h)\subseteq(C(g)C(h))^{\star} \end{array} \right\}.
$$
Then
\begin{itemize}
\item[(1)] $\Kr(R,\star)=\Kr(R,\star_a)$.
\item[(2)] $\Kr(R,\star)$ is a B\'{e}zout domain.
\item[(3)] $I\Kr(R,\star)\cap R_H=I^{\star_a}$ for every nonzero finitely
generated homogeneous ideal $I$ of $R$.
\item[(4)] If $f,g\in R$ are nonzero such that $C(f+g)^{\star}=(C(f)+C(g))^{\star}$,
then $(f,g)\Kr(R,\star)=(f+g)\Kr(R,\star)$. In particular,
$f\Kr(R,\star)=C(f)\Kr(R,\star)$ for all $f\in R$.
\end{itemize}
\end{thm}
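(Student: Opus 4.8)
The plan is to reduce the whole statement to the classical \texttt{e.a.b.} Kronecker function ring of Lemma \ref{Kr} by first establishing (1), from which (2) and (3) drop out immediately, and then handling (4) through the \texttt{e.a.b.} description of membership. The starting observation is that $\star_a$ is an \texttt{e.a.b.} semistar operation of finite type, so for the operation $\star_a$ the defining condition of the theorem, namely that there is $0\neq h$ with $C(f)C(h)\subseteq(C(g)C(h))^{\star_a}$, collapses by the \texttt{e.a.b.} property to the single condition $C(f)\subseteq C(g)^{\star_a}$; hence the general formula of the theorem applied to $\star_a$ returns exactly the ring $\Kr(R,\star_a)$ of Lemma \ref{Kr}, and it is this ring that the right-hand side of (1) denotes.

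For (1) I would first dispatch $\Kr(R,\star)\subseteq\Kr(R,\star_a)$: if $C(f)C(h)\subseteq(C(g)C(h))^{\star}$ then $C(f)\subseteq((C(g)C(h))^{\star}:C(h))\subseteq C(g)^{\star_a}$ directly from the definition of $\star_a$. The reverse inclusion is the main obstacle and proceeds in three steps. Starting from $C(f)\subseteq C(g)^{\star_a}$, I would use that $C(f)$ is finitely generated together with the fact that the family $\{((C(g)H)^{\star}:H)\}_{H\in f(R)}$ is upward directed (via $H\mapsto H_1H_2$, using the elementary inclusion $A^{\star}B\subseteq(AB)^{\star}$) to produce a single $H\in f(R)$ with $C(f)H\subseteq(C(g)H)^{\star}$. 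Next I would homogenize $H$: applying Lemma \ref{o} to each homogeneous generator $c_j$ of $C(f)$ (with $B=C(g)$ and $F=H$, noting $c_jH\subseteq C(f)H\subseteq(C(g)H)^{\star}$) yields homogeneous $T_j\in f(R)$ with $c_jT_j\subseteq(C(g)T_j)^{\star}$, and passing to the product $T=T_1\cdots T_r$ gives $c_jT\subseteq(C(g)T)^{\star}$ for every $j$, hence $C(f)T\subseteq(C(g)T)^{\star}$ with $T$ homogeneous and finitely generated. Finally, since $R$ has a unit of nonzero degree, every finitely generated homogeneous ideal is the content of a single element (as in the proof of Lemma \ref{link}), so $T=C(h)$ for some $h$, giving $C(f)C(h)\subseteq(C(g)C(h))^{\star}$, i.e. $f/g\in\Kr(R,\star)$.

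Parts (2) and (3) then follow with no further work: by (1) we have $\Kr(R,\star)=\Kr(R,\star_a)$, and since $\star_a$ is \texttt{e.a.b.} and $R$ has a unit of nonzero degree, Lemma \ref{Kr}(2) and (3) applied to $\star_a$ give at once that $\Kr(R,\star)$ is a B\'ezout domain and that $I\Kr(R,\star)\cap R_H=I^{\star_a}$ for every nonzero finitely generated homogeneous ideal $I$.

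For (4), write $\mathcal{K}=\Kr(R,\star)$; by (1) a fraction $p/q$ lies in $\mathcal{K}$ if and only if $C(p)\subseteq C(q)^{\star_a}$. The key computation is that the hypothesis $C(f+g)^{\star}=(C(f)+C(g))^{\star}$ upgrades from $\star$ to $\star_a$. Writing $A=C(f+g)\subseteq B=C(f)+C(g)$, the equality $A^{\star}=B^{\star}$ gives, for every $H\in f(R)$, the chain $(BH)^{\star}=(B^{\star}H)^{\star}=(A^{\star}H)^{\star}=(AH)^{\star}$ (again using $A^{\star}C\subseteq(AC)^{\star}$ and idempotence of $\star$), so the defining unions for $B^{\star_a}$ and $A^{\star_a}$ agree term by term and $C(f+g)^{\star_a}=(C(f)+C(g))^{\star_a}$. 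Hence $C(f),C(g)\subseteq(C(f)+C(g))^{\star_a}=C(f+g)^{\star_a}$, which by the \texttt{e.a.b.} membership criterion means $f/(f+g),g/(f+g)\in\mathcal{K}$, i.e. $f,g\in(f+g)\mathcal{K}$; combined with $f+g\in(f,g)$ this gives $(f,g)\mathcal{K}=(f+g)\mathcal{K}$. The ``in particular'' equality $f\mathcal{K}=C(f)\mathcal{K}$ is then immediate and needs no hypothesis: $f\in C(f)$ gives one inclusion, while for each homogeneous component $c$ of $f$ one has $C(c)=(c)\subseteq C(f)\subseteq C(f)^{\star_a}$, so $c/f\in\mathcal{K}$ and $c\in f\mathcal{K}$, giving $C(f)\subseteq f\mathcal{K}$. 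I expect step (1), and within it the passage from an arbitrary $H$ to the content $C(h)$ of a single element, to be the only genuine difficulty; everything else is a formal consequence of Lemma \ref{Kr} and the \texttt{e.a.b.} calculus.
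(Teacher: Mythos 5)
Your proposal is correct in outline and follows essentially the same route as the paper's proof: you identify the ring obtained by applying the displayed formula to $\star_a$ with the \texttt{e.a.b.} Kronecker ring of Lemma \ref{Kr} via the collapse $C(f)C(h)\subseteq(C(g)C(h))^{\star_a}\Leftrightarrow C(f)\subseteq C(g)^{\star_a}$, you prove the hard inclusion $\Kr(R,\star_a)\subseteq\Kr(R,\star)$ by homogenizing with Lemma \ref{o} and then realizing the resulting homogeneous ideal as a content $C(h)$ using the unit of nonzero degree, you deduce (2) and (3) from Lemma \ref{Kr}, and you prove (4) through the membership criterion (here you actually supply the argument that the paper only cites from Anderson--Chang; your upgrade of the hypothesis from $\star$ to $\star_a$ and the component-wise proof of the ``in particular'' statement are both correct). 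The differences are minor: you use directedness of the family $\{((C(g)H)^{\star}:H)\}_{H\in f(R)}$ to produce a single $H$, where the paper keeps one $H_i$ per homogeneous generator and multiplies the resulting $T_i$ at the end, and your proof of $\Kr(R,\star)\subseteq\Kr(R,\star_a)$ is a direct computation rather than an appeal to monotonicity of $\Kr$ in the operation.

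There is, however, one technical slip you must repair, and it is exactly where the paper inserts a scaling step. You apply Lemma \ref{o} with $F=H$, but Lemma \ref{o} requires $F\subseteq R_H$, while an element of $f(R)$ is merely a finitely generated $R$-submodule of the full quotient field $K$, which in general properly contains $R_H$. This hypothesis is not removable: the proof of Lemma \ref{o} decomposes the generators of $F$ into homogeneous components, and elements of $K\setminus R_H$ have no such decomposition. The fix is the paper's: choose $0\neq r\in R$ with $rH\subseteq R$ and replace $H$ by $rH$; the containment $C(f)(rH)\subseteq(C(g)(rH))^{\star}$ is immediate from the axiom $(xE)^{\star}=xE^{\star}$. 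A cognate remark applies at your final step: the homogeneous $T=T_1\cdots T_r$ coming out of Lemma \ref{o} is a priori only a \emph{fractional} ideal, whereas writing $T=C(h)$ with $h\in R$ (as the definition of $\Kr(R,\star)$ demands, and as the content realization cited from Lemma \ref{link} assumes) requires $T\subseteq R$; either observe that once $F\subseteq R$ the construction inside Lemma \ref{o} produces integral $T_j$, or scale $T$ by a nonzero homogeneous $s$ with $sT\subseteq R$, which again does not disturb the containment $C(f)T\subseteq(C(g)T)^{\star}$. With these scaling corrections your argument is complete and coincides with the paper's.
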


\begin{proof} It it clear from the definition that $\Kr(R,\star)=\Kr(R,\star_f)$. Thus
using Lemma \ref{homof}, we can assume, without loss of generality,
that $\star$ is a semistar operation of finite type.

Parts (2) and (3) are direct consequences of (1) using Lemma
\ref{Kr}. For the proof of (1) we have two cases:

{\bf Case 1:} Assume that $\star$ is an \texttt{e.a.b.} semistar
operation of finite type. In this case, for $f,g,h\in
R\backslash\{0\}$ we have
$$
C(f)C(h)\subseteq(C(g)C(h))^{\star}\Leftrightarrow C(f)\subseteq
C(g)^{\star}.
$$
Therefore $\Kr(R,\star)$ -as defined in this theorem- coincides with
$\Kr(R,\star)$ of an \texttt{e.a.b.} semistar operation $\star$, as
defined in Lemma \ref{Kr}. Also in this case $\star=\star_a$ by
\cite[Proposition 4.5(5)]{FL2}. Hence in this case (1) is true.

{\bf Case 2:} General case. Let $\star$ be a semistar operation of
finite type on $R$. By definition it is easy to see that, given two
semistar operations on $R$ with $\star_1\leq\star_2$, then
$\Kr(R,\star_1)\subseteq\Kr(R,\star_2)$. Using \cite[Proposition
4.5(3)]{FL2} we have $\star\leq\star_a$. Therefore
$\Kr(R,\star)\subseteq\Kr(R,\star_a)$. Conversely let
$f/g\in\Kr(R,\star_a)$. Then, by Case 1, $C(f)\subseteq
C(g)^{\star_a}$. Set $A:=C(f)$ and $B:=C(g)$. Then $A\subseteq
B^{\star_a}=\bigcup\{((BH)^{\star}:H)| H\in f(R)\}$. Suppose that
$A$ is generated by homogeneous elements $x_1,\cdots,x_n\in R$. Then
there is $H_i\in f(R)$, such that $x_iH_i\subseteq (BH_i)^{\star}$
for $i=1,\cdots,n$. Choose $0\neq r_i\in R$ such that
$F_i=r_iH_i\subseteq R$. Thus $x_iF_i\subseteq (BF_i)^{\star}$.
Therefore Lemma \ref{o} gives a homogeneous $T_i\in f(R)$ such that
$x_iT_i\subseteq (BT_i)^{\star}$. Now set $T:=T_1T_2\cdots T_n$
which is a finitely generated homogeneous fractional ideal of $R$
such that $AT\subseteq(BT)^{\star}$. Now since $R$ has a unit of
nonzero degree, we can find an element $h\in R$ such that $C(h)=T$.
Then $C(f)C(h)\subseteq(C(g)C(h))^{\star}$. This means that
$f/g\in\Kr(R,\star)$ to complete the proof of (1).

The proof of (4) is exactly the same as \cite[Theorem 2.9(3)]{AC}.
\end{proof}

\section{Graded P$\star$MDs}

Let $R=\bigoplus_{\alpha\in\Gamma}R_{\alpha}$ be a graded integral
domain, $\star$ be a semistar operation on $R$, $H$ be the set of
nonzero homogeneous elements of $R$, and $N_{\star}(H)=\{f\in
R|C(f)^{\star}=R^{\star}\}$. In this section we define the notion of
graded Pr\"{u}fer $\star$-multiplication domain (graded P$\star$MD
for short) and give several characterization of it.

We say that a graded integral domain
$R=\bigoplus_{\alpha\in\Gamma}R_{\alpha}$ with a semistar operation
$\star$, is a \emph{graded Pr\"{u}fer $\star$-multiplication domain
(graded P$\star$MD)} if every nonzero finitely generated homogeneous
ideal of $R$ is a $\star_f$-invertible, i.e.,
$(II^{-1})^{\star_f}=R^{\star}$ for every nonzero finitely generated
homogeneous ideal $I$ of $R$. It is easy to see that a graded
P$\star$MD is the same as a graded P$\star_f$MD by definition, and
is the same as a graded P$\widetilde{\star}$MD by \cite[Proposition
2.18]{FP}. When $\star=v$ we recover the classical notion of a
\emph{graded Pr\"{u}fer $v$-multiplication domain (graded P$v$MD)}
\cite{AA}. It is known that $R$ is a graded P$v$MD if and only if
$R$ is a P$v$MD \cite[Theorem 6.4]{AA}.

Also when $\star=d$, a graded P$d$MD is called a \emph{graded
Pr\"{u}fer domain} \cite{AC}. It is clear that every graded
Pr\"{u}fer domain is a graded P$v$MD and hence a P$v$MD. In
particular every graded Pr\"{u}fer domain is an integrally closed
domain. Although $R$ is a graded P$v$MD if and only if $R$ is a
P$v$MD, Anderson and Chang in \cite[Example 3.6]{AC} provided an
example of a graded Pr\"{u}fer domain which is not Pr\"{u}fer. It is
known that if $A, B, C$ are ideals of an integral domain $D$, then
$(A+B)(A+C)(B+C)=(A+B+C)(AB+AC+BC)$. Thus
$R=\bigoplus_{\alpha\in\Gamma}R_{\alpha}$ is a graded Pr\"{u}fer
domain if and only if every nonzero ideal of $R$ generated by two
homogeneous elements is invertible. We use this result in this
section without comments.

The following proposition is inspired by \cite[Theorem 24.3]{G}.

\begin{prop}\label{grp} Let $R=\bigoplus_{\alpha\in\Gamma}R_{\alpha}$ be a
graded integral domain. Then the following conditions are
equivalent:
\begin{itemize}
\item[(1)] $R$ is a graded Pr\"{u}fer domain.
\item[(2)] Each finitely generated nonzero homogeneous ideal of $R$
is a cancelation ideal.
\item[(3)] If $A,B, C$ are finitely generated homogeneous ideals of $R$ such
that $AB=AC$ and $A$ is nonzero, then $B=C$.
\item[(4)] $R$ is integrally closed and there is a positive integer
$n>1$ such that $(a,b)^n=(a^n,b^n)$ for each $a, b\in H$.
\item[(5)] $R$ is integrally closed and there exists an integer
$n>1$ such that $a^{n-1}b\in(a^n,b^n)$ for each $a, b\in H$.
\end{itemize}
\end{prop}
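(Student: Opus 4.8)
The plan is to prove the chain of implications $(1)\Rightarrow(2)\Rightarrow(3)\Rightarrow(1)$ among the purely multiplicative conditions, and then close the loop through the ``integrally closed plus power equation'' conditions via $(1)\Rightarrow(4)\Rightarrow(5)\Rightarrow(1)$. This mirrors the structure of \cite[Theorem 24.3]{G} but restricted throughout to \emph{homogeneous} ideals, so the key point in every step will be to check that the ideals produced along the way remain homogeneous, and to use the reduction (noted just before the proposition) that graded Pr\"uferness is equivalent to invertibility of every nonzero ideal generated by two homogeneous elements.

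For $(1)\Rightarrow(2)$: if $A$ is a finitely generated nonzero homogeneous ideal, then by (1) it is invertible, and an invertible ideal is always a cancellation ideal (from $AB=AC$ multiply by $A^{-1}$). For $(2)\Rightarrow(3)$ there is nothing to prove beyond specializing the definition of cancellation ideal to finitely generated homogeneous $B,C$. The substantive implication is $(3)\Rightarrow(1)$: given homogeneous $a,b\in H$, I would apply (3) to the identity $(a,b)(a^2,b^2)=(a,b)^2(a,b)$ suitably arranged so as to force $(a,b)$ to be invertible; concretely, one shows $(a,b)\big((a,b)(a^2+b^2)\big)=(a,b)^2(a^2+b^2+\text{cross terms})$ type relations reduce, after cancellation of the nonzero factor $(a,b)$, to an equation exhibiting $(a,b)$ as invertible. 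Here I would lean on the factorization $(A+B)(A+C)(B+C)=(A+B+C)(AB+AC+BC)$ already recalled in the text, applied with principal homogeneous $A,B$, to produce a cancellable common factor.

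For $(1)\Rightarrow(4)$ and $(1)\Rightarrow(5)$: a graded Pr\"ufer domain is integrally closed (also noted in the text, since it is a graded P$v$MD hence a P$v$MD). The power identity $(a,b)^n=(a^n,b^n)$ follows because $(a,b)$ is invertible, so locally at each relevant prime it is principal, and the equality of the two ideals can be checked locally; invertibility gives $(a,b)^n$ and $(a^n,b^n)$ the same localizations, and since both are finitely generated homogeneous ideals, equality of all localizations at the members of $h$-$\QMax$ forces equality. Then $(4)\Rightarrow(5)$ is immediate, since $a^{n-1}b\in(a,b)^n=(a^n,b^n)$. The genuinely new work is $(5)\Rightarrow(1)$, which I expect to be the main obstacle: from the membership $a^{n-1}b\in(a^n,b^n)$ together with integral closedness one must deduce that $(a,b)$ is invertible. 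The classical (Gilmer) argument writes $a^{n-1}b=ra^n+sb^n$ with $r,s\in R$ and manipulates this to show that $b/a$ is integral over $R$ (or that $R[b/a]=R$ locally), using integral closedness of $R_H$ and the fact that $(a,b)$ is then locally principal; the care needed in the graded setting is that $r,s$ may be taken homogeneous of the appropriate degree (because the relation is homogeneous and $R$ is graded, one may extract the homogeneous components of the correct degree), so that the resulting integral dependence relation lives inside the graded structure and integral closedness of $R$ (equivalently of $R^{\widetilde{\star}}$, via Lemma \ref{w} and the GCD property of $R_H$) can be invoked. Having shown each two-generated homogeneous ideal invertible, the reduction recalled before the proposition upgrades this to graded Pr\"uferness, closing the cycle.
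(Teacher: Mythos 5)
Your overall architecture (two cycles, $(1)\Rightarrow(2)\Rightarrow(3)\Rightarrow(1)$ and $(1)\Rightarrow(4)\Rightarrow(5)\Rightarrow(1)$) differs from the paper's single cycle $(1)\Rightarrow(2)\Rightarrow(3)\Rightarrow(4)\Rightarrow(5)\Rightarrow(1)$, and several of your steps are fine: $(1)\Rightarrow(2)\Rightarrow(3)$ and $(4)\Rightarrow(5)$ are indeed immediate; your localization proof of $(1)\Rightarrow(4)$ is workable, though heavier than necessary (once (2) is known one can simply cancel $(a,b)$ from the identity $(a,b)^3=(a,b)(a^2,b^2)$); and $(5)\Rightarrow(1)$ is exactly the paper's appeal to \cite[Proposition 24.2]{G}. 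Note, however, that no homogenization of $r,s$ is needed in that last step, since Gilmer's proposition applies verbatim to the pair $a,b$ in the integrally closed domain $R$ and its conclusion, invertibility of $(a,b)$, is about an ideal that is automatically homogeneous; also your invocation of Lemma \ref{w} and $R^{\widetilde{\star}}$ there is out of place, as no semistar operation occurs in this proposition.

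The genuine gap is $(3)\Rightarrow(1)$. Cancelling the factor $(a,b)$ from $(a,b)^3=(a,b)(a^2,b^2)$ yields only the power identity $(a,b)^2=(a^2,b^2)$; no rearrangement of such product identities followed by cancellation ``exhibits $(a,b)$ as invertible,'' because the power identity alone does not imply invertibility --- that is precisely why conditions (4) and (5) carry the hypothesis ``integrally closed,'' and why \cite[Proposition 24.2]{G} assumes it. (Concretely: from $ab=ra^2+sb^2$ one only gets that $x=ra/b$ and $y=sb/a$ satisfy $x+y=1$, $xy=rs\in R$, hence are integral over $R$; integral closedness is what puts them in $R$ and produces the inverse of $(a,b)$.) Moreover, your auxiliary ideals built from $a^2+b^2$ are not homogeneous when $\deg a\neq\deg b$, so hypothesis (3) cannot even be applied to them. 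What is missing, and what the paper supplies, is the derivation of integral closedness from (3): running the argument of \cite[Proposition 24.1]{G} with homogeneous $a,b$ (so that every ideal occurring is homogeneous and (3) applies) shows $R$ is integrally closed in $R_H$, and \cite[Proposition 5.4]{AA2} then upgrades this to $R$ integrally closed; combined with the power identity this gives $(3)\Rightarrow(4)$, after which $(5)\Rightarrow(1)$ closes the loop. So your second cycle is sound, but your first cycle is broken at $(3)\Rightarrow(1)$, and repairing it essentially forces you back onto the paper's route through (4).
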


\begin{proof} The implications $(1)\Rightarrow(2)\Rightarrow(3)$ and
$(4)\Rightarrow(5)$ are clear.

$(3)\Rightarrow(4)$ By the same argument as in the proof of part
$(2)\Rightarrow(3)$, in \cite[Proposition 24.1]{G}, we have that $R$
is integrally closed in $R_H$. Therefore by \cite[Proposition
5.4]{AA2}, $R$ is integrally closed. Now if $a, b\in H$ we have
$(a,b)^3=(a,b)(a^2,b^2)$. Thus by (3) we obtain that
$(a,b)^2=(a^2,b^2)$.

$(5)\Rightarrow(1)$ If (5) holds then \cite[Proposition 24.2]{G},
implies that each nonzero homogeneous ideal generated by two
homogeneous elements is invertible. Therefore $R$ is a graded
Pr\"{u}fer domain.
\end{proof}

The ungraded version of the following theorem is due to Gilmer (see
\cite[Corollary 28.5]{G}).

\begin{thm}\label{cp} Let $R=\bigoplus_{\alpha\in\Gamma}R_{\alpha}$ be a
graded integral domain with a unit of nonzero degree. Then $R$ is a
graded Pr\"{u}fer domain if and only if $C(f)C(g)=C(fg)$ for all $f,
g\in R_H$.
\end{thm}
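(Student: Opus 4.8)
The plan is to treat this as the graded analogue of Gilmer's proof of \cite[Corollary 28.5]{G}, handling the two implications separately. I would first record the free inclusion $C(fg)\subseteq C(f)C(g)$, valid for all $f,g\in R_H$ because every homogeneous component of $fg$ is a sum of products of homogeneous components of $f$ and of $g$. Thus the entire content of the statement lies in the reverse inclusion. For the forward implication, suppose $R$ is a graded Pr\"ufer domain and take $f,g\in R_H$. Then $C(f)$ is a nonzero finitely generated homogeneous fractional ideal, hence invertible, and so is each power $C(f)^m$. By the graded Dedekind--Mertens lemma \cite[Lemma 1.1]{AC} there is an integer $m\geq1$ with $C(f)^{m+1}C(g)=C(f)^mC(fg)$; cancelling the invertible ideal $C(f)^m$ gives $C(f)C(g)=C(fg)$.

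For the converse I would reduce, via Proposition \ref{grp}, to verifying two things for arbitrary nonzero homogeneous $a,b\in H$: that $R$ is integrally closed, and that $ab\in(a^2,b^2)$, i.e. the case $n=2$ of condition (5). The ideal identity is where the unit $u$ of nonzero degree $d$ does the work, playing the role of the indeterminate $X$ in Gilmer's argument. Choosing an integer $k$ with $\deg a+kd\neq\deg b$ (possible since $d\neq0$), set $f=au^k+b$ and $g=au^k-b$. Both have content $(a,b)$, while the cross terms cancel so that $fg=a^2u^{2k}-b^2$, whose two homogeneous components have distinct degrees; hence $C(fg)=(a^2,b^2)$. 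Applying the hypothesis yields $(a,b)^2=(a^2,b^2)$, that is, $ab\in(a^2,b^2)$. The pleasant feature here is that the cancellation of cross terms is automatic and does not depend on the degrees of $a$ and $b$.

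The main obstacle is integral closure. I would prove $R$ is integrally closed in $R_H$, which gives integral closure by \cite[Proposition 5.4]{AA2}. As the integral closure is a graded subring, it suffices to treat a homogeneous $w=a/b$ with $a,b\in H$ satisfying a monic relation $w^n+c_1w^{n-1}+\cdots+c_n=0$ with homogeneous $c_i\in R$. Over the quotient field I would write $p(X)=X^n+c_1X^{n-1}+\cdots+c_n=(X-w)q(X)$ with $q$ monic of degree $n-1$, clear denominators to obtain $g(X)=bX-a$ and $h(X)=b^{n-1}q(X)$ in $R[X]$ with $g(X)h(X)=b^np(X)$, and then specialise $X$ to $U=u^M$ for $M$ large enough that all the relevant homogeneous degrees separate. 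For such $M$ one has $C(bU-a)=(a,b)$, the graded content $C(h(U))$ equals the coefficient ideal $c_R(h)\supseteq(b^{n-1})$, and $C(b^np(U))=(b^n)$ because $U^n$ is a unit forces the content of $p(U)$ to be $R$. The hypothesis then gives $(a,b)\,c_R(h)=(b^n)$; comparing this with $(a,b)(b^{n-1})\subseteq(a,b)c_R(h)=(b^n)$ yields $ab^{n-1}\in(b^n)$, hence $a\in(b)$ and $w\in R$.

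With integral closure established and the identity $ab\in(a^2,b^2)$ in hand, Proposition \ref{grp} completes the converse. I expect the delicate part to be the degree bookkeeping throughout the integral-closure step: one must choose the powers of $u$, and the exponent $M$, so that each constructed element has content exactly the intended coefficient ideal with no accidental degree collisions, thereby ensuring that the surrogate variable $U=u^M$ faithfully mimics the indeterminate $X$ of the classical argument. The remaining computations are routine once this bookkeeping is set up correctly.
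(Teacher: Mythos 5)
Your proof is correct, and its skeleton is the same as the paper's: the forward direction via the graded Dedekind--Mertens lemma \cite[Lemma 1.1(1)]{AC} plus cancellation of the invertible ideal $C(f)^m$, and the converse via (i) integral closure, (ii) the identity $(a,b)^2=(a^2,b^2)$ obtained from $f=au^k+b$, $g=au^k-b$ (the paper uses the notational variant $a+\alpha^n b$, $a-\alpha^n b$), and (iii) Proposition \ref{grp}. The one genuine difference is step (i): the paper simply quotes \cite[Theorem 3.7]{AA} for ``the content formula implies $R$ is integrally closed,'' whereas you reprove this from scratch by adapting the classical Gaussian-implies-integrally-closed argument, using $U=u^M$ as a surrogate indeterminate. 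Your argument does go through: after reducing to a homogeneous $w=a/b$ with homogeneous $c_i$ (of degree $i\deg w$), torsionfreeness of $\langle\Gamma\rangle$ guarantees that all the needed degree separations (e.g.\ $Md\neq\deg w$, so that $U^n$ survives as a component and $C(p(U))=R$; and the non-collision making $b^{n-1}\in C(h(U))$) hold for all but finitely many $M$, and then $(a,b)\,C(h(U))=(b^n)$ together with $b^{n-1}\in C(h(U))$ gives $ab^{n-1}\in(b^n)$, hence $a\in(b)$. What the two routes buy: the paper's citation keeps the proof short and defers the hard content to Anderson--Anderson; your version is self-contained and makes explicit that the presence of a unit of nonzero degree lets the classical polynomial-ring argument transfer essentially verbatim, with the same torsionfreeness facts (also needed silently in step (ii), where $2(\deg a+kd)\neq 2\deg b$ must follow from $\deg a+kd\neq\deg b$) doing the bookkeeping.
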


\begin{proof} $(\Rightarrow)$ Let $f, g\in R_H$. Then by \cite[Lemma
1.1(1)]{AC}, there exists some positive integer $n$ such that
$C(f)^{n+1}C(g)=C(f)^nC(fg)$. Now since $R$ is a graded Pr\"{u}fer
domain, the homogeneous fractional ideal $C(f)^n$ is invertible.
Thus $C(f)C(g)=C(fg)$ for all $f, g\in R_H$.

$(\Leftarrow)$ Let $\alpha\in H$ be a unit of nonzero degree. Assume
that $C(f)C(g)=C(fg)$ for all $f, g\in R_H$. Hence $R$ is integrally
closed by \cite[Theorem 3.7]{AA}. Now let $a, b\in H$ be arbitrary.
We can choose a positive integer $n$ such that
$\deg(a)\neq\deg(\alpha^nb)$. So that $C(a+\alpha^nb)=(a,b)$. Hence,
since $(a+\alpha^nb)(a-\alpha^nb)=a^2-(\alpha^nb)^2$, we have
$(a,b)(a,-b)=(a^2,-b^2)$. Consequently $(a,b)^2=(a^2,b^2)$. Thus by
Proposition \ref{grp}, we see that $R$ is a graded Pr\"{u}fer
domain.
\end{proof}

\begin{lem}\label{lo} Let $R=\bigoplus_{\alpha\in\Gamma}R_{\alpha}$ be a
graded integral domain and $P$ be a homogeneous prime ideal. Then,
the following statements are equivalent:
\begin{itemize}
\item[(1)] $R_{H\backslash P}$ is a graded Pr\"{u}fer domain
\item[(2)] $R_P$ is a valuation domain.
\item[(3)] For each nonzero homogeneous $u\in R_H$, $u$ or
$u^{-1}$ is in $R_{H\backslash P}$.
\end{itemize}
\end{lem}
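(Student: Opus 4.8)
The plan is to abbreviate $S:=R_{H\backslash P}$ and to base everything on two structural facts about this homogeneous localization, which I would prove first. (i) The homogeneous primes of $S$ are precisely the $QS$ with $Q$ a homogeneous prime of $R$ satisfying $Q\subseteq P$ (a homogeneous prime $Q$ survives in $S$ iff $Q\cap(H\backslash P)=\emptyset$, i.e.\ iff $Q\subseteq P$); hence $\fm:=PS$ is the \emph{unique} maximal homogeneous ideal of $S$, and therefore a homogeneous element of $S$ is a unit if and only if it lies outside $\fm$. (ii) Since $PS\cap R=P$, the set $R\backslash P$ maps into $S\backslash\fm$ and one checks directly that $R_P=S_{\fm}$, the localization of $S$ at the prime $\fm$. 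Both (i) and (ii) are elementary but are used repeatedly.

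For $(1)\Leftrightarrow(3)$ I would argue entirely inside $S$. Assume $(3)$. Given nonzero homogeneous $a,b\in S$, the element $a/b\in R_H$ is homogeneous, so $(3)$ gives $a/b\in S$ or $b/a\in S$; hence $(a,b)S$ equals $bS$ or $aS$, and is in particular principal and invertible. By the two-generated criterion recorded just before Proposition \ref{grp}, $S$ is a graded Pr\"{u}fer domain. For the converse, assume $(1)$ and take nonzero homogeneous $u\in R_H$, written $u=a/b$ with $a,b\in S$ homogeneous. Then $I:=(a,b)S$ is invertible, and both $I$ and $I^{-1}=(S:I)$ are homogeneous, so from $II^{-1}=S$ I can extract a degree-$0$ relation $1=\sum_k c_kd_k$ with $c_k\in I$, $d_k\in I^{-1}$ homogeneous and $\deg c_k+\deg d_k=0$. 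Since $1\notin\fm$, some $c_0d_0\notin\fm$ is a homogeneous unit, and the usual computation $z=(zd_0)(c_0d_0)^{-1}c_0$ for $z\in I$ then shows $I=c_0S$. Writing $a=\alpha c_0$, $b=\beta c_0$ with $\alpha,\beta\in S$ homogeneous and noting that $(\alpha,\beta)S=S$ forces $\alpha$ or $\beta$ to be a unit, one obtains $b/a\in S$ or $a/b\in S$, i.e.\ $(3)$.

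For $(2)\Leftrightarrow(3)$ I would pass through the identity $R_P=S_{\fm}$ and compare homogeneous components. Assume $(2)$ and let $u\in R_H$ be nonzero homogeneous; as $R_P$ is a valuation domain we may assume $u\in R_P=S_{\fm}$, so $tu\in S$ for some $t\in S\backslash\fm$. Decomposing $t$ into homogeneous components and using that $\fm$ is homogeneous, some component $t_0\notin\fm$ is a homogeneous unit of $S$; matching degrees in $tu\in S$ gives $t_0u\in S$, hence $u\in S$, proving $(3)$. Conversely assume $(3)$ and take $0\neq x\in K$, say $x=a/b$ with $a,b\in S$. By $(3)$ the finitely many nonzero homogeneous components of $a$ are totally ordered by divisibility in $S$, so $a=a_{i_0}e$ where $a_{i_0}$ is the component of least value and $e\in S$ has degree-$0$ component $1$; since $\fm$ is homogeneous, $e\notin\fm$, so $e$ is a unit of $S_{\fm}=R_P$. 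Treating $b$ the same way, $x$ equals $a_{i_0}/b_{j_0}$ up to a unit of $R_P$, and $a_{i_0}/b_{j_0}\in R_H$ is homogeneous, so $(3)$ places it (or its inverse) in $S\subseteq R_P$. Thus $x\in R_P$ or $x^{-1}\in R_P$, and $R_P$ is a valuation domain.

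The main obstacle, and the only genuinely graded point, is the mismatch between the two localizations: $S=R_{H\backslash P}$ inverts only homogeneous elements, whereas $R_P$ inverts all of $R\backslash P$. This is exactly what fact (ii) ($R_P=S_{\fm}$) together with the unit-detection fact (i) is meant to bridge. The delicate steps are that a general element of $S\backslash\fm$ must have some homogeneous component outside $\fm$ (hence a unit), and the normalization $a=a_{i_0}e$ with $e$ a unit of $R_P$; both work only because $\fm$ is a \emph{homogeneous} ideal, so membership in $\fm$ can be tested component by component. Everything else is the standard reduction of ``graded Pr\"{u}fer'' to invertibility of two-generated homogeneous ideals.
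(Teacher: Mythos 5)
Your proof is correct, but it takes a genuinely different route from the paper's. The paper proves the cycle $(1)\Rightarrow(2)\Rightarrow(3)\Rightarrow(1)$: for $(1)\Rightarrow(2)$ it does not argue directly but notes that a graded Pr\"{u}fer domain is a graded P$v$MD whose nonzero homogeneous ideals are all $t$-ideals, so that $h$-$\QMax^t(R_{H\backslash P})=\{PR_{H\backslash P}\}$, and then cites \cite[Lemma 2.7]{CKL} to conclude that $R_P=(R_{H\backslash P})_{PR_{H\backslash P}}$ is a valuation domain; its $(2)\Rightarrow(3)$ is asserted in two lines, leaving implicit exactly the homogeneous-component argument you spell out; and its $(3)\Rightarrow(1)$ (comparability of any two homogeneous ideals, then the two-generator criterion) is in substance your $(3)\Rightarrow(1)$. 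You instead prove $(1)\Leftrightarrow(3)$ and $(2)\Leftrightarrow(3)$ self-containedly: your $(1)\Rightarrow(3)$ replaces the appeal to \cite{CKL} by a graded-local argument (an invertible homogeneous ideal of $S=R_{H\backslash P}$ is principal with a homogeneous generator, because $PS$ is the unique maximal homogeneous ideal), and your $(3)\Rightarrow(2)$ --- ordering the homogeneous components of an arbitrary element of the quotient field by divisibility and splitting off a unit of $R_P$ --- has no counterpart in the paper, which reaches $(2)$ only through the cited lemma. The paper's route buys brevity at the price of external machinery ($t$-ideal theory and the CKL lemma); yours buys self-containedness and makes explicit the two bridges the paper glosses over, namely $R_P=S_{PS}$ and unit-detection for homogeneous elements. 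On the latter point, note that your fact (i) is most directly justified not via the correspondence of homogeneous primes but by the one-line observation that a homogeneous $a/s\in S$ with $a\in H\backslash P$ has inverse $s/a\in S$, whence every proper homogeneous ideal of $S$ lies in $PS$; with that reading, all steps of your argument are sound.
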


\begin{proof} $(1)\Rightarrow(2)$ Suppose that $R_{H\backslash P}$ is a graded
Pr\"{u}fer domain. In particular $R_{H\backslash P}$ is a (graded)
P$v$MD and each nonzero homogeneous ideal of $R_{H\backslash P}$ is
a $t$-ideal. So that $h$-$\QMax^t(R_{H\backslash
P})=\{PR_{H\backslash P}\}$. Thus by \cite[Lemma 2.7]{CKL}, we see
that $(R_{H\backslash P})_{PR_{H\backslash P}}=R_P$ is a valuation
domain.

$(2)\Rightarrow(3)$ Let $0\neq u\in R_H$. Thus by the hypothesis $u$
or $u^{-1}$ is in $R_P$. Thus $u$ or $u^{-1}$ is in $R_{H\backslash
P}$.

$(3)\Rightarrow(1)$ Let $I, J$ be two nonzero homogeneous ideals of
$R_{H\backslash P}$ and assume that $I\nsubseteq J$. So there is a
homogeneous element $a\in I\backslash J$. For each $b\in J$, we have
$\frac{a}{b}\notin R_{H\backslash P}$, since otherwise we have
$a=(\frac{a}{b})b\in J$. Thus by the hypothesis $\frac{b}{a}\in
R_{H\backslash P}$. Hence $b=(\frac{b}{a})a\in I$. Thus we showed
that $J\subseteq I$, and so every two homogeneous ideal are
comparable.

Now Let $(a,b)$ be an ideal generated by two homogeneous elements of
$R_{H\backslash P}$. Now by the first paragraph $(a,b)=(a)$ or
$(a,b)=(b)$. Thus $(a,b)$ is invertible. Hence $R_{H\backslash P}$
is a graded Pr\"{u}fer domain.
\end{proof}

\begin{thm}\label{loc} Let $R=\bigoplus_{\alpha\in\Gamma}R_{\alpha}$ be a
graded integral domain, and $\star$ be a semistar operation on $R$
such that $R^{\star}\subsetneq R_H$. Then, the following statements
are equivalent:
\begin{itemize}
\item[(1)] $R$ is a graded P$\star$MD.
\item[(2)] $R_{H\backslash P}$ is a graded Pr\"{u}fer domain for
each $P\in h$-$\QSpec^{\widetilde{\star}}(R)$.
\item[(3)] $R_{H\backslash P}$ is a graded Pr\"{u}fer domain for
each $P\in h$-$\QMax^{\widetilde{\star}}(R)$.
\item[(4)] $R_P$ is a valuation domain for
each $P\in h$-$\QSpec^{\widetilde{\star}}(R)$.
\item[(5)] $R_P$ is a valuation domain for
each $P\in h$-$\QMax^{\widetilde{\star}}(R)$.
\end{itemize}
\end{thm}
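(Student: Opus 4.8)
The plan is to prove the cycle of implications $(1)\Rightarrow(2)\Rightarrow(3)\Rightarrow(5)\Rightarrow(4)\Rightarrow\cdots$ together with the equivalences $(2)\Leftrightarrow(4)$ and $(3)\Leftrightarrow(5)$, where the latter two come essentially for free from Lemma~\ref{lo}. Indeed, Lemma~\ref{lo} says that for a homogeneous prime $P$, the ring $R_{H\backslash P}$ is a graded Pr\"ufer domain if and only if $R_P$ is a valuation domain. Applying this pointwise over $P\in h\text{-}\QSpec^{\widetilde{\star}}(R)$ gives $(2)\Leftrightarrow(4)$, and applying it over $P\in h\text{-}\QMax^{\widetilde{\star}}(R)$ gives $(3)\Leftrightarrow(5)$. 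Also $(2)\Rightarrow(3)$ and $(4)\Rightarrow(5)$ are trivial restrictions from all quasi-primes to the maximal ones. So the whole theorem reduces to establishing, say, $(1)\Rightarrow(2)$ and $(3)\Rightarrow(1)$ (or $(5)\Rightarrow(1)$).

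For $(1)\Rightarrow(2)$, I would fix $P\in h\text{-}\QSpec^{\widetilde{\star}}(R)$ and show $R_{H\backslash P}$ is graded Pr\"ufer, i.e.\ that every ideal generated by two homogeneous elements is invertible. Let $a,b\in H\backslash P$-localized, and consider the finitely generated homogeneous ideal $I=(a,b)$ of $R$. By hypothesis $(1)$, $I$ is $\star_f$-invertible, so $(II^{-1})^{\star_f}=R^{\star}$. The key step is to localize this $\star_f$-invertibility relation at $R_{H\backslash P}$ and conclude that $IR_{H\backslash P}$ is invertible in $R_{H\backslash P}$. Here I would invoke Example~\ref{el}, which says $R_{H\backslash P}$ is homogeneously $(\star,\star')$-linked to $R$ for any semistar operation $\star'$ on $R_{H\backslash P}$, together with Proposition~\ref{tilda}, which gives $I^{\widetilde{\star}}R_{H\backslash P}=IR_{H\backslash P}$. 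The content is that $\star_f$-invertibility of $I$ upstairs forces genuine invertibility of $IR_{H\backslash P}$ downstairs, using that $PR_{H\backslash P}$ captures the localization and that the finitely generated homogeneous ideal becomes locally principal.

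For the converse $(3)\Rightarrow(1)$, I would take a nonzero finitely generated homogeneous ideal $I$ of $R$ and show $(II^{-1})^{\star_f}=R^{\star}$; equivalently, by the remark after Lemma~\ref{l}, that $II^{-1}\nsubseteq Q$ for every $Q\in h\text{-}\QMax^{\star_f}(R)=h\text{-}\QMax^{\widetilde{\star}}(R)$ (the equality is Proposition~\ref{t=w}). Fix such a $Q$; by hypothesis $(3)$ the ring $R_{H\backslash Q}$ is graded Pr\"ufer, so $IR_{H\backslash Q}$ is invertible, hence principal, generated by some homogeneous element. Then $(II^{-1})R_{H\backslash Q}=R_{H\backslash Q}$, which means $II^{-1}$ contains a homogeneous element not in $Q$, i.e.\ $II^{-1}\nsubseteq Q$. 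Running this over all $Q$ and using that a homogeneous ideal equals $R^{\star}$ under $\star_f$ precisely when it avoids every homogeneous quasi-$\star_f$-maximal ideal yields $(II^{-1})^{\star_f}=R^{\star}$.

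The main obstacle I anticipate is the careful bookkeeping in $(1)\Rightarrow(2)$: one must pass correctly between $\star_f$-invertibility, $\widetilde{\star}$, and the localizations $R_{H\backslash P}$, making sure the homogeneous-ideal hypotheses on $\star$ (namely $R^{\star}\subsetneq R_H$ and that $\widetilde{\star}$ sends homogeneous ideals to homogeneous ones, guaranteed by Proposition~\ref{pp}) are genuinely used to keep all ideals in play homogeneous. The subtle point is that $\star_f$-invertibility is an abstract operation-theoretic condition, whereas graded Pr\"ufer is a concrete invertibility condition on two-generated homogeneous ideals; the bridge is the linkedness of Example~\ref{el} combined with the localization formula $I^{\widetilde{\star}}R_{H\backslash P}=IR_{H\backslash P}$ of Proposition~\ref{tilda}, and I expect most of the work to lie in verifying that invertibility descends and ascends cleanly along these localizations.
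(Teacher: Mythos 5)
Your architecture is the same as the paper's: get $(2)\Leftrightarrow(4)$ and $(3)\Leftrightarrow(5)$ pointwise from Lemma~\ref{lo}, note $(2)\Rightarrow(3)$ is trivial, and reduce everything to $(1)\Rightarrow(2)$ and $(3)\Rightarrow(1)$, both proved by localizing $II^{-1}$ at $H\backslash P$. The localization computations you sketch are exactly those in the paper. However, there is a genuine gap in your $(3)\Rightarrow(1)$: you reduce ``$(II^{-1})^{\star_f}=R^{\star}$'' to ``$II^{-1}\nsubseteq Q$ for every $Q\in h$-$\QMax^{\star_f}(R)=h$-$\QMax^{\widetilde{\star}}(R)$'', citing the remark after Lemma~\ref{l} and Proposition~\ref{t=w}. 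Both of those results carry the hypothesis that $\star$ itself sends homogeneous fractional ideals to homogeneous ones, and Theorem~\ref{loc} deliberately does \emph{not} assume this --- it assumes only $R^{\star}\subsetneq R_H$. This is not a cosmetic point: Example~\ref{e} exhibits a finite type $\star$ with $R^{\star}\subsetneq R_H$ for which $h$-$\QMax^{\star_f}(R)=\emptyset$ while $h$-$\QMax^{\widetilde{\star}}(R)=\{(X)\}$, so the equality you take from Proposition~\ref{t=w} can fail outright, and the avoidance criterion for $\star_f$ becomes vacuous and false. The same conflation of $\star_f$ with $\widetilde{\star}$ also lurks in your $(1)\Rightarrow(2)$, where you want to pass from $\star_f$-invertibility upstairs to invertibility downstairs.

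The repair is exactly what the paper does: work with $\widetilde{\star}$ throughout. Since a graded P$\star$MD is the same as a graded P$\widetilde{\star}$MD by \cite[Proposition 2.18]{FP} (quoted in Section 4), it suffices to prove $(II^{-1})^{\widetilde{\star}}=R^{\widetilde{\star}}$; and because $\widetilde{\star}$ is of finite type and, by Proposition~\ref{pp}, sends homogeneous fractional ideals to homogeneous ones, Lemma~\ref{l} and the avoidance criterion legitimately apply to $\widetilde{\star}$ and $h$-$\QMax^{\widetilde{\star}}(R)$. With that substitution your argument closes: if $II^{-1}\subseteq P$ for some $P\in h$-$\QMax^{\widetilde{\star}}(R)$, then hypothesis (3) makes $IR_{H\backslash P}$ invertible, so $R_{H\backslash P}=(IR_{H\backslash P})(IR_{H\backslash P})^{-1}=(II^{-1})R_{H\backslash P}\subseteq PR_{H\backslash P}$, a contradiction; hence $II^{-1}$ avoids every $P$ and $(II^{-1})^{\widetilde{\star}}=R^{\widetilde{\star}}$. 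Likewise in $(1)\Rightarrow(2)$, first pass to $\widetilde{\star}$-invertibility of $I$; then $II^{-1}\subseteq P$ would give $1\in P^{\widetilde{\star}}\cap R=P$, so $II^{-1}\nsubseteq P$ for every homogeneous quasi-$\widetilde{\star}$-prime $P$, and since $II^{-1}$ is homogeneous it contains a homogeneous element outside $P$, whence $(II^{-1})R_{H\backslash P}=R_{H\backslash P}$ and $IR_{H\backslash P}$ is invertible --- no appeal to Example~\ref{el} or Proposition~\ref{tilda} is actually needed.
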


\begin{proof} $(2)\Rightarrow(3)$ is trivial, and,
$(2)\Leftrightarrow(4)$ and $(3)\Leftrightarrow(5)$, follow from
Lemma \ref{lo}.

$(1)\Rightarrow(2)$ Let $I$ be a nonzero finitely generated
homogeneous ideal of $R$. Then $I$ is
$\widetilde{\star}$-invertible. Therefore, for each $P\in
h$-$\QSpec^{\widetilde{\star}}(R)$, since $II^{-1}\nsubseteq P$, we
have $R_{H\backslash P}=(II^{-1})R_{H\backslash P}=IR_{H\backslash
P}I^{-1}R_{H\backslash P}=(IR_{H\backslash P})(IR_{H\backslash
P})^{-1}$. So that $IR_{H\backslash P}$ is invertible. Thus
$R_{H\backslash P}$ is a graded Pr\"{u}fer domain for each $P\in
h$-$\QSpec^{\widetilde{\star}}(R)$.

$(3)\Rightarrow(1)$ Let $I$ be a nonzero finitely generated
homogeneous ideal of $R$. Suppose that $I$ is not
$\widetilde{\star}$-invertible. Hence there exists $P\in
h$-$\QMax^{\widetilde{\star}}(R)$ such that $II^{-1}\subseteq P$.
Thus $R_{H\backslash P}=(IR_{H\backslash P})(IR_{H\backslash
P})^{-1}=II^{-1}R_{H\backslash P}\subseteq PR_{H\backslash P}$,
which is a contradiction. So that $II^{-1}\nsubseteq P$ for each
$P\in h$-$\QMax^{\widetilde{\star}}(R)$. Therefore
$(II^{-1})^{\widetilde{\star}}=R^{\widetilde{\star}}$, that is $I$
is $\widetilde{\star}$-invertible, and hence $R$ is a graded
P$\star$MD.
\end{proof}

The ungraded version of the following theorem is due to Chang in the
star operation case \cite[Theorem 3.7]{Ch}, and is due to Anderson,
Fontana, and Zafrullah in the case of semistar operations
\cite[Theorem 1.1]{AFZ}.

\begin{thm}\label{hhh} Let $R=\bigoplus_{\alpha\in\Gamma}R_{\alpha}$ be a
graded integral domain with a unit of nonzero degree, and $\star$ be
a semistar operation on $R$ such that $R^{\star}\subsetneq R_H$.
Then $R$ is a graded P$\star$MD if and only if
$(C(f)C(g))^{\widetilde{\star}}=C(fg)^{\widetilde{\star}}$ for all
$f, g\in R_H$.
\end{thm}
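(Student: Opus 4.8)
The plan is to reduce the graded P$\star$MD condition to a statement about localizations and then to invoke the content formula from Theorem~\ref{cp}. Since a graded P$\star$MD is the same as a graded P$\widetilde{\star}$MD, and by Theorem~\ref{loc} this is equivalent to $R_{H\backslash P}$ being a graded Pr\"ufer domain for each $P\in h\text{-}\QMax^{\widetilde{\star}}(R)$, I would first translate both sides of the desired equivalence into conditions that can be checked locally at the quasi-$\widetilde{\star}$-maximal ideals. The key tool on the right-hand side is that $\widetilde{\star}$ is stable and of finite type, so by Proposition~\ref{tilda} one has $E^{\widetilde{\star}}=\bigcap_{P\in h\text{-}\QMax^{\widetilde{\star}}(R)}ER_{H\backslash P}$ for homogeneous $E$; this means the equality $(C(f)C(g))^{\widetilde{\star}}=C(fg)^{\widetilde{\star}}$ holds globally if and only if $C(f)C(g)R_{H\backslash P}=C(fg)R_{H\backslash P}$ for every $P\in h\text{-}\QMax^{\widetilde{\star}}(R)$.

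Next I would carry out the forward direction. Assume $R$ is a graded P$\star$MD. Fix $P\in h\text{-}\QMax^{\widetilde{\star}}(R)$; by Theorem~\ref{loc} the localization $R_{H\backslash P}$ is a graded Pr\"ufer domain. I note that $R_{H\backslash P}$ inherits a unit of nonzero degree from $R$, and its homogeneous quotient ring agrees with $R_H$, so Theorem~\ref{cp} applies to $R_{H\backslash P}$ and yields $C_{R_{H\backslash P}}(f)C_{R_{H\backslash P}}(g)=C_{R_{H\backslash P}}(fg)$ for all $f,g\in R_H$. Since content localizes, i.e. $C_R(f)R_{H\backslash P}=C_{R_{H\backslash P}}(f)$, this gives $C(f)C(g)R_{H\backslash P}=C(fg)R_{H\backslash P}$ for every such $P$. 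Intersecting over all $P$ and using the formula from Proposition~\ref{tilda} (applied to the homogeneous fractional ideals $C(f)C(g)$ and $C(fg)$) then produces the equality $(C(f)C(g))^{\widetilde{\star}}=C(fg)^{\widetilde{\star}}$.

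For the converse, suppose $(C(f)C(g))^{\widetilde{\star}}=C(fg)^{\widetilde{\star}}$ for all $f,g\in R_H$. Localizing at a fixed $P\in h\text{-}\QMax^{\widetilde{\star}}(R)$ and using the second assertion of Proposition~\ref{tilda}, namely $I^{\widetilde{\star}}R_{H\backslash P}=IR_{H\backslash P}$, I obtain $C(f)C(g)R_{H\backslash P}=C(fg)R_{H\backslash P}$, which is exactly the content identity $C_{R_{H\backslash P}}(f)C_{R_{H\backslash P}}(g)=C_{R_{H\backslash P}}(fg)$ in $R_{H\backslash P}$. Applying Theorem~\ref{cp} in the other direction shows $R_{H\backslash P}$ is a graded Pr\"ufer domain for each such $P$, and then Theorem~\ref{loc} (condition (3)) gives that $R$ is a graded P$\star$MD.

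The main obstacle I anticipate is the bookkeeping around localization: I must verify carefully that $R_{H\backslash P}$ is genuinely a graded integral domain with the same homogeneous quotient ring $R_H$ and that it retains a unit of nonzero degree, so that Theorem~\ref{cp} is legitimately available after localizing; and I must confirm that the content operator commutes with localization at $H\backslash P$, i.e. $C_R(f)R_{H\backslash P}=C_{R_{H\backslash P}}(f)$, which is what lets me pass between the global $\widetilde{\star}$-equality and the local content formula. These are routine but need to be stated, since the whole argument hinges on freely moving between $R$ and its homogeneous localizations. Once that is in place, the equivalence follows by combining Theorem~\ref{loc}, Theorem~\ref{cp}, and the intersection/stability formulas of Proposition~\ref{tilda}.
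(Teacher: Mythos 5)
Your proof is correct, and your converse direction is exactly the paper's: localize at each $P\in h\text{-}\QMax^{\widetilde{\star}}(R)$ via Proposition~\ref{tilda}, apply Theorem~\ref{cp} to $R_{H\backslash P}$, and conclude with Theorem~\ref{loc}. Where you genuinely diverge is the forward direction. The paper argues directly at the level of $\widetilde{\star}$: by the Dedekind--Mertens-type lemma \cite[Lemma 1.1(1)]{AC} there is $n$ with $C(f)^{n+1}C(g)=C(f)^nC(fg)$; applying $\widetilde{\star}$ and using that $C(f)^n$ is $\widetilde{\star}$-invertible (since $R$ is a graded P$\star$MD) one cancels $(C(f)^n)^{\widetilde{\star}}$ and is done in two lines, with no localization at all. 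You instead run the same local-global machine in both directions: Theorem~\ref{loc} to get each $R_{H\backslash P}$ graded Pr\"ufer, Theorem~\ref{cp} locally, then reassembly through the intersection formula of Proposition~\ref{tilda}. Your route buys symmetry (the two implications become mirror images) at the cost of heavier machinery and the bookkeeping you correctly flag: that $R_{H\backslash P}$ is graded with homogeneous quotient field $R_H$ and inherits the unit of nonzero degree, and that $C_R(f)R_{H\backslash P}=C_{R_{H\backslash P}}(f)$ --- all of which are routine and true. One small point to make explicit in either approach: Proposition~\ref{tilda} is stated for homogeneous \emph{integral} ideals, while $C(f)C(g)$ and $C(fg)$ are homogeneous \emph{fractional} ideals; the formula extends by scaling by an element of $H$ (using property $\star_1$ of semistar operations), a step the paper also leaves implicit in its converse. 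Note too that since Theorem~\ref{cp} is itself proved by the Dedekind--Mertens cancellation trick, your forward direction ultimately rests on the same idea as the paper's, just packaged through the localizations rather than applied directly to $\widetilde{\star}$.
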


\begin{proof} $(\Rightarrow)$ Let $f, g\in R_H$. Choose a positive integer $n$ such that
$C(f)^{n+1}C(g)=C(f)^nC(fg)$ by \cite[Lemma 1.1(1)]{AC}. Thus
$(C(f)^{n+1}C(g))^{\widetilde{\star}}=(C(f)^nC(fg))^{\widetilde{\star}}$.
Since $R$ is a graded P$\star$MD, the homogeneous fractional ideal
$C(f)^n$ is ${\widetilde{\star}}$-invertible. Thus
$(C(f)C(g))^{\widetilde{\star}}=C(fg)^{\widetilde{\star}}$ for all
$f, g\in R_H$.

$(\Leftarrow)$ Assume that
$(C(f)C(g))^{\widetilde{\star}}=C(fg)^{\widetilde{\star}}$ for all
$f, g\in R_H$. Let $P\in h$-$\QMax^{\widetilde{\star}}(R)$. Then
using Proposition \ref{tilda}, we have $C(f)R_{H\backslash
P}C(g)R_{H\backslash P}=C(f)C(g)R_{H\backslash
P}=(C(f)C(g))^{\widetilde{\star}}R_{H\backslash
P}=C(fg)^{\widetilde{\star}}R_{H\backslash P}=C(fg)R_{H\backslash
P}$. Since $R_{H\backslash P}$ has a unit of nonzero degree, Theorem
\ref{cp} shows that $R_{H\backslash P}$ is a graded Pr\"{u}fer
domain. Now Theorem \ref{loc}, implies that $R$ is a graded
P$\star$MD.
\end{proof}

We now recall the notion of $\star$-valuation overring (a notion due
essentially to P. Jaffard \cite[page 46]{Jaf}). For a domain $D$ and
a semistar operation $\star$ on $D$, we say that a valuation
overring $V$ of $D$ is a \emph{$\star$-valuation overring of $D$}
provided $F^{\star}\subseteq FV$, for each $F\in f(D)$.

\begin{rem}\label{r} (1) Let $\star$ be a semistar operation on a graded integral domain
$R=\bigoplus_{\alpha\in\Gamma}R_{\alpha}$. Recall that for each
$F\in f(R)$ we have
$$
F^{\star_a}=\bigcap\{FV|V\text{ is a }\star\text{-valuation overring
of }R\},
$$
by \cite[Propositions 3.3 and 3.4 and Theorem 3.5]{FL1}.

(2) We have $N_{\star}(H)=N_{\widetilde{\star}_a}(H)$. Indeed, since
$\widetilde{\star}\leq\widetilde{\star}_a$ by \cite[Proposition
4.5]{FL2}, we have $N_{\star}(H)=N_{\widetilde{\star}}(H)\subseteq
N_{\widetilde{\star}_a}(H)$. Now if $f\in R\backslash N_{\star}(H)$
then, $C(f)^{\widetilde{\star}}\subsetneq R^{\widetilde{\star}}$.
Thus there is a homogeneous quasi-$\widetilde{\star}$-prime ideal
$P$ of $R$ such that $C(f)\subseteq P$. Let $V$ be a valuation
domain dominating $R_P$ with maximal ideal $M$ \cite[Corollary
19.7]{G}. Therefore $V$ is a $\widetilde{\star}$-valuation overring
of $R$ by \cite[Theorem 3.9]{FL}, and $C(f)V\subseteq M$; so
$C(f)^{(\widetilde{\star})_a}\subsetneq R^{(\widetilde{\star})_a}$
and $f\notin N_{\widetilde{\star}_a}(H)$. Thus we obtain that
$N_{\star}(H)=N_{\widetilde{\star}_a}(H)$.
\end{rem}

In the following theorem we generalize a characterization of P$v$MDs
proved by Arnold and Brewer \cite[Theorem 3]{AB}. It also
generalizes \cite[Theorem 3.7]{Ch}, \cite[Theorems 3.4 and 3.5]{AC},
and \cite[Theorem 3.1]{FJS}.

\begin{thm}\label{NKP1} Let $R=\bigoplus_{\alpha\in\Gamma}R_{\alpha}$ be a
graded integral domain with a unit of nonzero degree, and $\star$ be
a semistar operation on $R$ such that $R^{\star}\subsetneq R_H$.
Then, the following statements are equivalent:
\begin{itemize}
\item[(1)] $R$ is a graded P$\star$MD.
\item[(2)] Every ideal of $R_{N_{\star}(H)}$ is extended from a homogeneous ideal of $R$.
\item[(3)] Every principal ideal of $R_{N_{\star}(H)}$ is extended from a homogeneous ideal of $R$.
\item[(4)] $R_{N_{\star}(H)}$ is a Pr\"{u}fer domain.
\item[(5)] $R_{N_{\star}(H)}$ is a B\'{e}zout domain.
\item[(6)] $R_{N_{\star}(H)}=\Kr(R,\widetilde{\star})$.
\item[(7)] $\Kr(R,\widetilde{\star})$ is a quotient ring of $R$.
\item[(8)] $\Kr(R,\widetilde{\star})$ is a flat $R$-module.
\item[(9)] $I^{\widetilde{\star}}=I^{\widetilde{\star}_a}$ for each nonzero homogeneous finitely
generated ideal of $R$.
\end{itemize}
In particular if $R$ is a graded P$\star$MD, then
$R^{\widetilde{\star}}$ is integrally closed.
\end{thm}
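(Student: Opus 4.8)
The plan is to prove everything by a web of implications pivoting on condition (1), organized into three clusters: the Nagata-ring conditions (2)--(5), the Kronecker-function-ring conditions (6)--(8), and the \texttt{e.a.b.} condition (9). Throughout I would use that the unit of nonzero degree forces property $(\#_{\star})$, so that Lemmas \ref{w} and \ref{inv}, Corollary \ref{C(f)} and Lemma \ref{sharp}(2) all apply, and that $\widetilde{\star}$ sends homogeneous fractional ideals to homogeneous ones by Proposition \ref{pp}; the latter is what makes $\Kr(R,\widetilde{\star})$ available through Theorem \ref{p} and guarantees $h\text{-}\QMax^{\widetilde{\star}}(R)\neq\emptyset$.

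For the Nagata-ring cluster I would first establish $(1)\Leftrightarrow(4)$ by localizing. By Lemma \ref{sharp}(2) together with Proposition \ref{t=w}, the maximal ideals of $R_{N_{\star}(H)}$ are exactly the $QR_{N_{\star}(H)}$ with $Q\in h\text{-}\QMax^{\widetilde{\star}}(R)$, and $(R_{N_{\star}(H)})_{QR_{N_{\star}(H)}}=R_Q$; since a domain is Pr\"ufer iff all its localizations at maximal ideals are valuation domains, $R_{N_{\star}(H)}$ is Pr\"ufer iff $R_Q$ is a valuation domain for each such $Q$, which by Theorem \ref{loc} is exactly (1). Then $(5)\Rightarrow(4)$ is trivial and $(4)\Rightarrow(5)$ follows because $Pic(R_{N_{\star}(H)})=0$ by Theorem \ref{p=0}, so a Pr\"ufer $R_{N_{\star}(H)}$ is B\'ezout. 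For $(1)\Rightarrow(2)$ I would use Corollary \ref{C(f)}: under (1) every $C(f)$ is $\star_f$-invertible, so $C(f)R_{N_{\star}(H)}=fR_{N_{\star}(H)}$, whence any ideal $\mathfrak A=(\mathfrak A\cap R)R_{N_{\star}(H)}$ equals $C(\mathfrak A\cap R)R_{N_{\star}(H)}$ and is extended from the homogeneous ideal $C(\mathfrak A\cap R)$. After the trivial $(2)\Rightarrow(3)$, I would close the loop with $(3)\Rightarrow(1)$: a Dedekind--Mertens computation (Lemma 1.1(1) of \cite{AC}) together with $C(s)R_{N_{\star}(H)}=R_{N_{\star}(H)}$ for $s\in N_{\star}(H)$ shows that (3) forces $C(\theta)R_{N_{\star}(H)}=\theta R_{N_{\star}(H)}$ for every $\theta\in R$, hence every $C(\theta)$ is $\star_f$-invertible by Corollary \ref{C(f)}; applying this to $\theta=a+u^{n}b$ with $C(\theta)=(a,b)$ gives enough invertible two-generated homogeneous ideals to deduce, via Lemma \ref{lo}, that each $R_Q$ is a valuation domain, and then (1) follows from Theorem \ref{loc}.

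For the Kronecker cluster I would first note the inclusion $R_{N_{\star}(H)}\subseteq\Kr(R,\widetilde{\star})$ always holds (take the witness $h=1$ and use $C(f)\subseteq R\subseteq R^{\widetilde{\star}}$). Granting $(1)\Rightarrow(6)$ (the hard step, below), the implication $(6)\Rightarrow(5)$ is immediate since $\Kr(R,\widetilde{\star})$ is B\'ezout by Theorem \ref{p}(2), while $(6)\Rightarrow(7)$ and $(7)\Rightarrow(8)$ are trivial because a quotient ring is a flat module. For $(8)\Rightarrow(1)$ I would take a finitely generated homogeneous $I$, note $I\Kr(R,\widetilde{\star})$ is invertible in the B\'ezout ring $\Kr(R,\widetilde{\star})$, and use the flatness identity $(R:I)\Kr(R,\widetilde{\star})=(\Kr(R,\widetilde{\star}):I\Kr(R,\widetilde{\star}))$ for finitely generated $I$ to descend $(IJ)\Kr(R,\widetilde{\star})=\Kr(R,\widetilde{\star})$ for a finitely generated $J\subseteq I^{-1}$; Theorem \ref{p}(3) then turns this into $(IJ)^{(\widetilde{\star})_a}=R^{(\widetilde{\star})_a}$, i.e. $IJ\nsubseteq Q$ for all $Q\in h\text{-}\QMax^{\widetilde{\star}}(R)$, which gives $(II^{-1})^{\widetilde{\star}}=R^{\widetilde{\star}}$. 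Condition (9) I would tie in separately: combining Lemma \ref{w} ($I^{\widetilde{\star}}=IR_{N_{\star}(H)}\cap R_H$) with Theorem \ref{p}(3) ($I^{(\widetilde{\star})_a}=I\Kr(R,\widetilde{\star})\cap R_H$) makes $(6)\Rightarrow(9)$ automatic, and for $(9)\Rightarrow(1)$ I would observe that $(C(f)C(g))^{(\widetilde{\star})_a}=C(fg)^{(\widetilde{\star})_a}$ holds unconditionally (from $f\Kr(R,\widetilde{\star})=C(f)\Kr(R,\widetilde{\star})$ in Theorem \ref{p}(4) and Theorem \ref{p}(3)), so (9) upgrades it to $(C(f)C(g))^{\widetilde{\star}}=C(fg)^{\widetilde{\star}}$, which is (1) by Theorem \ref{hhh}. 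Finally the ``in particular'' claim follows from $(1)\Rightarrow(4)$ and Lemma \ref{w}, since a Pr\"ufer domain is integrally closed.

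I expect the main obstacle to be $(1)\Rightarrow(6)$, namely the reverse inclusion $\Kr(R,\widetilde{\star})\subseteq R_{N_{\star}(H)}$. My approach is to write $R_{N_{\star}(H)}=\bigcap_{Q\in h\text{-}\QMax^{\widetilde{\star}}(R)}R_Q$ and to check membership locally: for $\varphi=f/g\in\Kr(R,\widetilde{\star})$ with witness $h$, extending $C(f)C(h)\subseteq(C(g)C(h))^{\widetilde{\star}}$ to $R_Q$ and using Proposition \ref{tilda} to collapse the $\widetilde{\star}$-closure yields $v_Q(C(f))\geq v_Q(C(g))$ for the valuation $v_Q$ of the valuation domain $R_Q$. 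The genuinely delicate point is the identity $v_Q(F)=v_Q(C(F))$ for every $F\in R$, which would then give $v_Q(f)\geq v_Q(g)$ and hence $\varphi\in R_Q$. To prove it I would use that, under (1), $R_{H\backslash Q}$ is a graded valuation domain by Lemma \ref{lo} and that $QR_{H\backslash Q}$ is a \emph{homogeneous} ideal: writing $F$ as a sum of homogeneous components, normalizing by a component of minimal valuation, and invoking Theorem \ref{cp}, one reduces to showing that a sum of homogeneous units of distinct degrees (one of them of degree zero) cannot be a nonunit---which is exactly where homogeneity of $Q$ is essential, since a homogeneous ideal cannot contain the degree-zero unit $1$. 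This graded compatibility between the valuation and the content is the technical heart of the theorem.
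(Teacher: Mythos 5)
Your proposal is correct, but it is organized around a genuinely different set of key implications than the paper's proof, most visibly in the Kronecker cluster. The paper never proves $(1)\Rightarrow(6)$ directly: its route is $(1)\Rightarrow(4)\Rightarrow(5)\Rightarrow(6)$, where $(5)\Rightarrow(6)$ uses the B\'ezout property of $R_{N_{\star}(H)}$ and Gilmer's theorem that every overring of a B\'ezout domain is a quotient ring (\cite[Proposition 27.3]{G}), then matches localizations $\Kr(R,\widetilde{\star})_M=R_Q$; it proves $(8)\Rightarrow(6)$ (not $(8)\Rightarrow(1)$) via Richman's flatness criterion and a comparison of maximal ideals inside a Pr\"ufer domain; and it proves $(9)\Rightarrow(1)$ through the cancellation identity $(a,b)^3=(a,b)(a^2,b^2)$, \texttt{e.a.b.} cancellation, integral closedness of $R^{\widetilde{\star}}$, and Proposition \ref{grp}. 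You instead prove $(1)\Rightarrow(6)$ head-on via the graded Gauss-lemma identity $v_Q(F)=v_Q(C(F))$ for $Q\in h$-$\QMax^{\widetilde{\star}}(R)$; prove $(8)\Rightarrow(1)$ by descending invertibility through the flat-extension identity $(R:I)\Kr(R,\widetilde{\star})=(\Kr(R,\widetilde{\star}):I\Kr(R,\widetilde{\star}))$ and Theorem \ref{p}(3); and prove $(9)\Rightarrow(1)$ by upgrading the unconditional equality $(C(f)C(g))^{\widetilde{\star}_a}=C(fg)^{\widetilde{\star}_a}$ (from Theorem \ref{p}(3),(4)) to the $\widetilde{\star}$-level and invoking Theorem \ref{hhh}. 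Your $(1)\Leftrightarrow(4)$ by direct localization also replaces the paper's $(1)\Rightarrow(4)$ via Lemma \ref{inv}, and your $(3)\Rightarrow(1)$ reconstructs the argument the paper outsources to \cite[Theorem 3.4]{AC}. What each buys: your version is more self-contained and conceptually transparent --- the content-valuation identity is exactly \emph{why} $\Kr(R,\widetilde{\star})$ collapses to the Nagata ring, and your $(9)\Rightarrow(1)$ is shorter than the paper's --- whereas the paper's longer cycle through $(5)$ lets it outsource all the local analysis to Gilmer and Richman and never needs any Gauss-type lemma.

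Two details in your sketch should be tightened, though both are fillable from results in the paper. First, in the content formula the normalized components $F_{\alpha}/F_{\alpha_0}$ need not be units: they are merely homogeneous elements of $R_Q\cap R_H=R_{H\backslash Q}$ (an identity for homogeneous primes $Q$ that you should record) of distinct nonzero degrees. The argument still works because $QR_Q\cap R_{H\backslash Q}=QR_{H\backslash Q}$ is a \emph{proper homogeneous} ideal and hence cannot contain an element whose degree-zero homogeneous component equals $1$ --- which is the point you correctly isolate, so this is a matter of phrasing rather than substance. Second, in $(8)\Rightarrow(1)$ the passage from $(IJ)^{\widetilde{\star}_a}=R^{\widetilde{\star}_a}$ to $IJ\nsubseteq Q$ for all $Q\in h$-$\QMax^{\widetilde{\star}}(R)$ is not a tautology, since $\widetilde{\star}\leq\widetilde{\star}_a$ points the wrong way; it needs Remark \ref{r}: either dominate $R_Q$ by a $\widetilde{\star}$-valuation overring $V$ and use $F^{\widetilde{\star}_a}\subseteq FV$, or use $N_{\widetilde{\star}_a}(H)=N_{\star}(H)$ together with property $(\#_{\star})$ to produce $f\in IJ\cap N_{\star}(H)$. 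With these repairs your web of implications covers all nine conditions and the final ``in particular'' statement.
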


\begin{proof} By Proposition \ref{pp} and Theorem
\ref{p}, we have $\Kr(R,\widetilde{\star})$ is well-defined and is a
B\'{e}zout domain.

$(1)\Rightarrow(2)$ Let $0\neq f\in R$. Then $C(f)$ is
$\widetilde{\star}$-invertible, because $R$ is a graded P$\star$MD,
and thus $fR_{N_{\star}(H)}=C(f)R_{N_{\star}(H)}$ by Corollary
\ref{C(f)}. Hence if $A$ is an ideal of $R_{N_{\star}(H)}$, then
$A=IR_{N_{\star}(H)}$ for some ideal $I$ of $R$, and thus
$A=(\sum_{f\in I}C(f))R_{N_{\star}(H)}$.

$(2)\Rightarrow(3)$ Clear.

$(3)\Rightarrow(1)$ Is the same as part $(3)\Rightarrow(1)$ in
\cite[Theorem 3.4]{AC}.

$(1)\Rightarrow(4)$ Let $A$ be a nonzero finitely generated ideal of
$R_{N_{\star}(H)}$. Then by Corollary \ref{C(f)},
$A=IR_{N_{\star}(H)}$ for some nonzero finitely generated
homogeneous ideal $I$ of $R$. Since $R$ is a graded P$\star$MD, $I$
is $\widetilde{\star}$-invertible, and thus $A=IR_{N_{\star}(H)}$ is
invertible by Lemma \ref{inv}.

$(4)\Rightarrow(5)$ Follows from Theorem \ref{p=0}.

$(5)\Rightarrow(6)$ Clearly
$R_{N_{\star}(H)}\subseteq\Kr(R,\widetilde{\star})$. Since
$R_{N_{\star}(H)}$ is a B\'{e}zout domain, then
$\Kr(R,\widetilde{\star})$ is a quotient ring of $R_{N_{\star}(H)}$,
by \cite[Proposition 27.3]{G}. If $Q\in
h$-$\QMax^{\widetilde{\star}}(R)$, then
$Q\Kr(R,\widetilde{\star})\subsetneq\Kr(R,\widetilde{\star})$.
Otherwise $Q\Kr(R,\widetilde{\star})=\Kr(R,\widetilde{\star})$, and
hence there is an element $f\in Q$, such that
$f\Kr(R,\widetilde{\star})=\Kr(R,\widetilde{\star})$. Thus
$\frac{1}{f}\in\Kr(R,\widetilde{\star})$. Therefore $R=C(1)\subseteq
C(f)^{(\widetilde{\star})_a}\subseteq R^{(\widetilde{\star})_a}$, so
that $C(f)^{(\widetilde{\star})_a}=R^{(\widetilde{\star})_a}$. Hence
$f\in N_{(\widetilde{\star})_a}(H)=N_{\star}(H)$ by Remark
\ref{r}(2). This means that
$Q^{\widetilde{\star}}=R^{\widetilde{\star}}$, a contradiction. Thus
$Q\Kr(R,\widetilde{\star})\subsetneq\Kr(R,\widetilde{\star})$, and
so there is a maximal ideal $M$ of $\Kr(R,\widetilde{\star})$ such
that $Q\Kr(R,\star)\subseteq M$. Hence $M\cap
R_{N_{\star}(H)}=QR_{N_{\star}(H)}$, by Lemma \ref{sharp}.
Consequently $R_Q\subseteq \Kr(R,\widetilde{\star})_M$, and since
$R_Q$ is a valuation domain, we have
$R_Q=\Kr(R,\widetilde{\star})_M$. Therefore
$R_{N_{\star}(H)}=\bigcap_{Q\in
h\text{-}\QMax^{\widetilde{\star}}(R)}R_Q\supseteq\bigcap_{M\in\Max(\Kr(R,\widetilde{\star}))}\Kr(R,\widetilde{\star})_M$.
Hence $R_{N_{\star}(H)}=\Kr(R,\widetilde{\star})$.

$(6)\Rightarrow(7)$ and $(7)\Rightarrow(8)$ are clear.

$(8)\Rightarrow(6)$ Recall that an overring $T$ of an integral
domain $S$ is a flat $S$-module if and only if $T_M=S_{M\cap S}$ for
all $M\in\Max(T)$ by \cite[Theorem 2]{R}.

Let $A$ be an ideal of $R$ such that
$A\Kr(R,\widetilde{\star})=\Kr(R,\widetilde{\star})$. Then there
exists an element $f\in A$ such that
$f\Kr(R,\widetilde{\star})=\Kr(R,\widetilde{\star})$ using Theorem
\ref{p}; so
$\frac{1}{f}\in\Kr(R,\widetilde{\star})=\Kr(R,\widetilde{\star}_a)$.
Thus $R=C(1)\subseteq C(f)^{\widetilde{\star}_a}\subseteq
R^{\widetilde{\star}_a}$, and so
$C(f)^{\widetilde{\star}_a}=R^{\widetilde{\star}_a}$. Hence
$C(f)^{\widetilde{\star}}=R^{\widetilde{\star}}$. Therefore $f\in
A\cap N_{\star}(H)\neq\emptyset$. Hence, if $P_0$ is a homogeneous
maximal quasi-$\widetilde{\star}$-ideal of $R$, then
$P_0\Kr(R,\widetilde{\star})\subsetneq\Kr(R,\widetilde{\star})$, and
since $P_0R_{N_{\star}(H)}$ is a maximal ideal of
$R_{N_{\star}(H)}$, there is a maximal ideal $M_0$ of
$\Kr(R,\widetilde{\star})$ such that $M_0\cap R=(M_0\cap
R_{N_{\star}(H)})\cap R=P_0R_{N_{\star}(H)}\cap R=P_0$. Thus by (8),
$\Kr(R,w)_{M_0}=R_{P_0}=(R_{N(H)})_{P_0R_{N(H)}}$.

Let $M_1$ be a maximal ideal of $\Kr(R,\widetilde{\star})$, and let
$P_1$ be a homogeneous maximal quasi-$\widetilde{\star}$-ideal of
$R$ such that $M_1\cap R_{N_{\star}(H)}\subseteq
P_1R_{N_{\star}(H)}$. By the above paragraph, there is a maximal
ideal $M_2$ of $\Kr(R,\widetilde{\star})$ such that
$\Kr(R,\widetilde{\star})_{M_2}=(R_{N_{\star}(H)})_{P_1R_{N_{\star}(H)}}$.
Note that
$\Kr(R,\widetilde{\star})_{M_2}\subseteq\Kr(R,\widetilde{\star})_{M_1}$
, $M_1$ and $M_2$ are maximal ideals, and $\Kr(R,\widetilde{\star})$
is a Pr\"{u}fer domain; hence $M_1=M_2$ (cf. \cite[Theorem
17.6(c)]{G}) and
$\Kr(R,\widetilde{\star})_{M_1}=(R_{N_{\star}(H)})_{P_1R_{N(H)}}$.
Thus
$$
\Kr(R,\widetilde{\star})=\bigcap_{M\in\Max(\Kr(R,\widetilde{\star}))}\Kr(R,\widetilde{\star})_M=\bigcap_{P\in
h\text{-}\QMax^{\widetilde{\star}}(R)}(R_{N_{\star}(H)})_{PR_{N_{\star}(H)}}=R_{N_{\star}(H)}.
$$

$(6)\Rightarrow(9)$ Assume that
$R_{N_{\star}(H)}=\Kr(R,\widetilde{\star})$. Let $I$ be a nonzero
homogeneous finitely generated ideal of $R$. Then by Lemma \ref{w}
and Theorem \ref{p}(3), we have
$I^{\widetilde{\star}}=IR_{N_{\star}(H)}\cap
R_H=I\Kr(R,\widetilde{\star})\cap R_H=I^{\widetilde{\star}_a}$.

$(9)\Rightarrow(1)$ Let $a$ and $b$ be two nonzero homogeneous
elements of $R$. Then
$((a,b)^3)^{\widetilde{\star}_a}=((a,b)(a^2,b^2))^{\widetilde{\star}_a}$
which implies that
$((a,b)^2)^{\widetilde{\star}_a}=(a^2,b^2)^{\widetilde{\star}_a}$.
Hence $((a,b)^2)^{\widetilde{\star}}=(a^2,b^2)^{\widetilde{\star}}$
and so $(a,b)^2R_{H\backslash P}=(a^2,b^2)R_{H\backslash P}$ for
each homogeneous maximal quasi-$\widetilde{\star}$-ideal $P$ of $R$.
On the other hand $R^{\widetilde{\star}}=R^{\widetilde{\star}_a}$ by
(9). Hence $R^{\widetilde{\star}}$ is integrally closed. Thus
$R^{\widetilde{\star}}R_{H\backslash P}=R_{H\backslash P}$ is
integrally closed. Therefore by Proposition \ref{grp},
$R_{H\backslash P}$ is a graded Pr\"{u}fer domain for each
homogeneous maximal quasi-$\star_f$-ideal of $R$. Thus $R$ is a
graded P$\star$MD by Theorem \ref{loc}.
\end{proof}

The following theorem is a graded version of a characterization of
Pr\"{u}fer domains proved by Davis \cite[Theorem 1]{D}. It also
generalizes \cite[Theorem 2.10]{DHLZ}, in the $t$-operation, and
\cite[Theorem 5.3]{EF}, in the case of semistar operations.

\begin{thm}\label{NKP2} Let $R=\bigoplus_{\alpha\in\Gamma}R_{\alpha}$ be a
graded integral domain with a unit of nonzero degree, and $\star$ be
a semistar operation on $R$ such that $R^{\star}\subsetneq R_H$.
Then, the following statements are equivalent:
\begin{itemize}
\item[(1)] $R$ is a graded P$\star$MD.
\item[(2)] Each homogeneously $(\star,t)$-linked overring of $R$ is a
P$v$MD.
\item[(3)] Each homogeneously $(\star,d)$-linked overring of $R$ is
a graded Pr\"{u}fer domain.
\item[(4)] Each homogeneously $(\star,t)$-linked overring of $R$, is integrally closed.
\item[(5)] Each homogeneously $(\star,d)$-linked overring of $R$, is integrally closed.
\end{itemize}
\end{thm}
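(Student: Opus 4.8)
The plan is to prove the cycle $(1)\Rightarrow(2)\Rightarrow(4)\Rightarrow(5)\Rightarrow(1)$ together with the side-chain $(1)\Rightarrow(3)\Rightarrow(5)$, which yields all five equivalences. The implications landing in the ``integrally closed'' conditions are routine. For $(1)\Rightarrow(2)$ and $(1)\Rightarrow(3)$ I would invoke Lemma \ref{link} to get $R_{N_{\star}(H)}\subseteq T_{N_{\star'}(H)}$ whenever $T$ is homogeneously $(\star,\star')$-linked over $R$; since $R_{N_{\star}(H)}$ is Pr\"ufer by Theorem \ref{NKP1}, the overring $T_{N_{\star'}(H)}$ is Pr\"ufer as well, and Theorem \ref{NKP1} applied to $T$ (with $\star'=t$, resp. $\star'=d$) shows that $T$ is a graded P$t$MD, i.e.\ a graded P$v$MD, hence a P$v$MD, resp. a graded Pr\"ufer domain (the degenerate case $T=R_H$ being handled directly). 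Then $(2)\Rightarrow(4)$ and $(3)\Rightarrow(5)$ are immediate because P$v$MDs and graded Pr\"ufer domains are integrally closed. Finally $(4)\Rightarrow(5)$ holds because every homogeneously $(\star,d)$-linked overring is homogeneously $(\star,t)$-linked: if $FT=T$ then $(FT)^{t}=T^{t}=T$.

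The whole weight of the theorem therefore rests on $(5)\Rightarrow(1)$, a graded analogue of Davis' theorem. By Theorem \ref{loc} it suffices to show that $R_{H\backslash P}$ is a graded Pr\"ufer domain for every $P\in h$-$\QMax^{\widetilde{\star}}(R)$. Fix such a $P$ and set $S:=R_{H\backslash P}$. By Example \ref{el}, $S$ is homogeneously $(\star,d)$-linked over $R$; moreover any homogeneous overring $T$ of $S$ inherits this, since $F^{\star}=R^{\star}$ forces $FS=S$, whence $1\in FS\subseteq FT$ and $FT=T$. Thus by hypothesis $(5)$ every homogeneous overring of $S$ — in particular $S$ itself — is integrally closed. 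By Lemma \ref{lo} it is then enough to prove that for every nonzero homogeneous $u\in R_H$ either $u\in S$ or $u^{-1}\in S$.

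The core case is $\deg u=\delta\neq0$. Suppose $u,u^{-1}\notin S$. Since $S$ is integrally closed and a relation $u^{p}\in S$ (resp. $u^{-p}\in S$) with $p\ge1$ would make $u$ (resp. $u^{-1}$) a root of the monic polynomial $X^{p}-u^{p}$ over $S$, no nonzero power of $u$ lies in $S$. Consider the homogeneous overring $T:=S[u^{2}]$; as $u$ is a root of $X^{2}-u^{2}\in T[X]$ it is integral over $T$, so integral closedness of $T$ would force $u\in T$. Extracting the degree-$\delta$ homogeneous component from an expression $u=\sum_{m\ge0}s_{m}u^{2m}$ with $s_m\in S$, and writing each surviving term inside $(R_H)_{\delta}=(R_H)_{0}\,u$, one obtains a relation $\sum_{m\ge0}\mu_{m}=1$ in which $\mu_{m}u^{1-2m}\in S$. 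Here I would use that $S$ is graded-local with unique homogeneous maximal ideal $PS$, so that its degree-$0$ subring $S_{0}$ is local; as $\sum_m\mu_m=1$ is a unit, some $\mu_{m_{0}}$ is a unit of $S_{0}$, whence $u^{1-2m_{0}}\in S$ with $1-2m_{0}$ odd and nonzero — contradicting that no nonzero power of $u$ is in $S$. Hence $u\notin S[u^{2}]$, so $S[u^{2}]$ is a homogeneous overring of $S$ that is not integrally closed, contradicting $(5)$.

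The remaining degree-$0$ case is reduced to the core case using the standing hypothesis that $R$ (hence $S$) has a unit $t$ of nonzero degree: for $u\in(R_H)_{0}$ one applies the dichotomy just proved to the homogeneous element $ut$ of nonzero degree, and then multiplies the resulting membership by $t^{\mp1}\in S$ to conclude $u\in S$ or $u^{-1}\in S$. This establishes that $S=R_{H\backslash P}$ is graded Pr\"ufer, and Theorem \ref{loc} gives $(1)$. I expect the main obstacle to be exactly this graded Davis step: hypothesis $(5)$ only controls \emph{homogeneous} overrings, so the classical theorem cannot be quoted directly, and the argument must genuinely exploit both the graded-local structure of $R_{H\backslash P}$ (to force a unit coefficient in the key relation $\sum\mu_m=1$) and the unit of nonzero degree (to descend from the graded dichotomy to its degree-$0$ instance).
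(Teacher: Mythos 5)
Your overall architecture is the same as the paper's: (1)$\Rightarrow$(2) and (1)$\Rightarrow$(3) via Lemma \ref{link} and Theorem \ref{NKP1}, the easy implications into the integral-closedness conditions, and the reduction of (5)$\Rightarrow$(1), via Example \ref{el}, Lemma \ref{lo} and Theorem \ref{loc}, to the dichotomy ``$u\in S$ or $u^{-1}\in S$ for every nonzero homogeneous $u$'', where $S:=R_{H\backslash P}$ and $P\in h$-$\QMax^{\widetilde{\star}}(R)$ (the paper adjoins $u^2,u^3$ where you adjoin $u^2$; that difference is immaterial). The gap is in your derivation of this dichotomy. From $u=\sum_m s_m u^{2m}$ you correctly pass to homogeneous coefficients $\sigma_m\in S$ of degree $(1-2m)\delta$ and obtain $1=\sum_m \mu_m$ with $\mu_m=\sigma_m u^{2m-1}$. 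But your next claim --- that locality of $S_0$ forces some $\mu_{m_0}$ to be a unit of $S_0$ --- is a non sequitur: the $\mu_m$ lie in $(R_H)_0$, and there is no reason for them to lie in $S_0$ at all. For instance $\mu_0=\sigma_0u^{-1}$, and if $\sigma_0$ happens to be a unit of $S$ then $\mu_0\in S$ would already mean $u^{-1}\in S$, exactly what you are assuming fails. Locality of $S_0$ says nothing about a sum of elements \emph{outside} $S_0$ equaling $1$; note also that every nonzero $\mu_m$ is trivially a unit of $(R_H)_0$, which is a field, so ``unit'' is only meaningful relative to $S_0$, where membership is unestablished.

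What is needed at exactly this point is the $u,u^{-1}$-lemma: if $S$ is integrally closed and a homogeneous $u$ in its quotient field satisfies a polynomial over $S$ having a unit coefficient (here the coefficient of $X$ is $-1$), then $u\in S$ or $u^{-1}\in S$. This is where the paper cites Kaplansky's Theorem 67; since $S=R_{H\backslash P}$ is only graded-local (every homogeneous non-unit lies in $PR_{H\backslash P}$) rather than quasi-local, one must either re-run Kaplansky's induction with homogeneous data --- which works, precisely because the polynomial and $u$ can be taken homogeneous --- or otherwise reduce to the quasi-local case. That induction is genuinely nontrivial and is not replaced by your $S_0$-locality remark, so your proof of (5)$\Rightarrow$(1) has a real hole at its crucial step. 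Everything surrounding it is fine and matches the paper: the inheritance of homogeneous $(\star,d)$-linkedness by homogeneous overrings of $S$, the observation that no nonzero power of $u$ can lie in $S$, and the reduction of the degree-zero case using the unit of nonzero degree (a case the paper does not even need to separate, since the Kaplansky-style argument is insensitive to $\deg u$).
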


\begin{proof} $(1)\Rightarrow(2)$ Let $T$ be a homogeneously $(\star,t)$-linked overring of
$R$. Thus by Lemma \ref{link}, we have $R_{N_{\star}(H)}\subseteq
T_{N_v(H)}$. Since $R$ is a graded P$\star$MD, by Theorem
\ref{NKP1}, we have $R_{N_{\star}(H)}$ is a Pr\"{u}fer domain. Thus
by \cite[Theorem 26.1]{G}, we have $T_{N_v(H)}$ is a Pr\"{u}fer
domain. Hence, again by Theorem \ref{NKP1}, we have $T$ is a graded
P$v$MD. Therefore using \cite[Theorem 6.4]{AA}, $T$ is a P$v$MD.

$(2)\Rightarrow(4)\Rightarrow(5)$ and $(3)\Rightarrow(5)$ are clear.

$(5)\Rightarrow(1)$ Let $P\in h$-$\QMax^{\widetilde{\star}}(R)$. For
a nonzero homogeneous $u\in R_H$, let $T=R[u^2,u^3]_{H\backslash
P}$. Then $R_{H\backslash P}$ and $T$ are homogeneous
$(\star,d)$-linked overring of $R$ by Example \ref{el}. So that
$R_{H\backslash P}$ and $T$ are integrally closed. Hence $u\in T$,
and since $T=R_{H\backslash P}[u^2,u^3]$, there exists a polynomial
$\gamma\in R_{H\backslash P}[X]$ such that $\gamma(u)=0$ and one of
the coefficients of $\gamma$ is a unit in $R_{H\backslash P}$. So
$u$ or $u^{-1}$ is in $R_{H\backslash P}$ by \cite[Theorem 67]{K}.
Therefore by Lemma \ref{lo}, $R_{H\backslash P}$ is a graded
Pr\"{u}fer domain. Thus $R$ is a graded P$\star$MD by Theorem
\ref{loc}.

$(1)\Rightarrow(3)$ Is the same argument as in part
$(1)\Rightarrow(2)$.
\end{proof}

The next result gives new characterizations of P$v$MDs for graded
integral domains, which is the special cases of Theorems \ref{loc},
\ref{hhh}, \ref{NKP1}, and \ref{NKP2}, for $\star=v$.

\begin{cor} Let $R=\bigoplus_{\alpha\in\Gamma}R_{\alpha}$ be a
graded integral domain with a unit of nonzero degree. Then, the
following statements are equivalent:
\begin{itemize}
\item[(1)] $R$ is a (graded) P$v$MD.
\item[(2)] $R_{H\backslash P}$ is a graded Pr\"{u}fer domain for
each $P\in h$-$\QMax^t(R)$.
\item[(3)] $R_P$ is a valuation domain for
each $P\in h$-$\QMax^t(R)$.
\item[(4)] Every ideal of
$R_{N_v(H)}$ is extended from a homogeneous ideal of $R$.
\item[(5)] $R_{N_v(H)}$ is a Pr\"{u}fer domain.
\item[(6)] $R_{N_v(H)}$ is a B\'{e}zout domain.
\item[(7)] $R_{N_v(H)}=\Kr(R,w)$.
\item[(8)] $\Kr(R,w)$ is a quotient ring of $R$.
\item[(9)] $\Kr(R,w)$ is a flat $R$-module.
\item[(10)] Each homogeneously $t$-linked overring of $R$ is a
P$v$MD.
\item[(11)] Each homogeneously $t$-linked overring of $R$, is integrally closed.
\item[(12)] $(C(f)C(g))^w=C(fg)^w$ for all
$f, g\in R_H$.
\item[(13)] $I^w=I^{w_a}$ for each nonzero homogeneous finitely
generated ideal of $R$.
\end{itemize}
\end{cor}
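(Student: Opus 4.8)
The plan is to derive all thirteen equivalences at once by specializing the four omnibus theorems to the single semistar operation $\star=v$ and then translating notation. Recall that $v$ is a star operation, so $R^{v}=R$, that $v_{f}=t$ and $\widetilde{v}=w$, and that $N_{v}(H)$, $\Kr(R,w)$, and $w_{a}$ are exactly the objects occurring in Theorems \ref{loc}, \ref{hhh}, \ref{NKP1}, and \ref{NKP2} once we set $\widetilde{\star}=w$ and $N_{\star}(H)=N_{v}(H)$. By the remark following Lemma \ref{homof}, each of $v$, $t$, $w$ sends homogeneous fractional ideals to homogeneous ones, so the standing hypotheses of those theorems are satisfied as soon as $R^{v}=R\subsetneq R_{H}$. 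First I would dispose of the degenerate case $R=R_{H}$: then $R$ is a homogeneous field, every nonzero finitely generated homogeneous ideal equals $R$, so $R$ is trivially a graded P$v$MD and all the listed conditions hold. Hence we may assume $R\subsetneq R_{H}$, i.e. $R^{v}\subsetneq R_{H}$, and apply each theorem with $\star=v$.

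Next I would read off the conditions one family at a time. Since $\widetilde{v}=w$ and, by Proposition \ref{t=w} applied to $\star=v$, we have $h\text{-}\QMax^{t}(R)=h\text{-}\QMax^{w}(R)$, conditions (2) and (3) of the corollary are precisely conditions (3) and (5) of Theorem \ref{loc} with $h\text{-}\QMax^{w}$ rewritten as $h\text{-}\QMax^{t}$. Condition (12) is Theorem \ref{hhh} with $\widetilde{\star}=w$. Conditions (4), (5), (6), (7), (8), (9), (13) are, respectively, conditions (2), (4), (5), (6), (7), (8), (9) of Theorem \ref{NKP1} after the substitutions $N_{\star}(H)=N_{v}(H)$, $\Kr(R,\widetilde{\star})=\Kr(R,w)$, and $\widetilde{\star}_{a}=w_{a}$. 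Finally, condition (1) is condition (1) of each theorem, using that a graded P$v$MD is the same as a graded P$\star$MD for $\star=v$ and, by \cite[Theorem 6.4]{AA}, the same as a P$v$MD, which is what the parenthetical ``(graded)'' records.

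For the linked-overring conditions (10) and (11) I would invoke Theorem \ref{NKP2}(2) and (4) with $\star=v$, which are stated for homogeneously $(v,t)$-linked overrings. The one point requiring care is their identification with the homogeneously $t$-linked overrings of the corollary: by the observation preceding Example \ref{el}, homogeneous $(\star,\star')$-linkedness depends only on $\widetilde{\star}$ and $\widetilde{\star'}$, so a homogeneously $(v,t)$-linked overring is the same as a homogeneously $(\widetilde{v},\widetilde{t})$-linked, that is $(w,w)$-linked, overring, which by definition is a homogeneously $t$-linked overring. I expect the main (and essentially the only) obstacle to be exactly this bookkeeping of notational identities---$v_{f}=t$, $\widetilde{v}=w$, $h\text{-}\QMax^{t}=h\text{-}\QMax^{w}$, and $(v,t)$-linked $=t$-linked---rather than any new mathematical content, since once the dictionary is fixed the corollary is a verbatim transcription of the four theorems at $\star=v$.
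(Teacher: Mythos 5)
Your proposal is correct and is essentially the paper's own proof: the paper obtains this corollary exactly by specializing Theorems \ref{loc}, \ref{hhh}, \ref{NKP1} and \ref{NKP2} to $\star=v$, using the same dictionary you spell out ($R^v=R$, $v_f=t$, $\widetilde{v}=w$, $h$-$\QMax^t(R)=h$-$\QMax^w(R)$ via Proposition \ref{t=w}, and homogeneously $(v,t)$-linked $=$ homogeneously $t$-linked). Your separate treatment of the degenerate case $R=R_H$ --- needed because the four theorems assume $R^{\star}\subsetneq R_H$ while the corollary's statement does not --- is a small point of care that the paper passes over silently.
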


\begin{center} {\bf ACKNOWLEDGMENT}

\end{center}I am grateful to the referee for carefully reading the first version
of this article.


\begin{thebibliography}{10}

\bibitem{A} D. D. Anderson, {\em Some remarks on the ring $R(X)$}, Comment. Math. Univ. St. Pauli, {\bf 26}, (1977), 137--140.

\bibitem{AA} D. D. Anderson, D. F. Anderson, {\em Divisorial ideals and invertible ideals in a graded integral domain}, J. Algebra, {\bf 76},
(1982), 549--569.

\bibitem{AA2} D. D. Anderson, D. F. Anderson, {\em Divisibility properties of graded domains}, Canad. J. Math. {\bf 34},
(1982), 196--215.

\bibitem{AC} D. F. Anderson and G. W. Chang, {\em Graded integral domains and Nagata rings}, J. Algebra, {\bf 387}, (2013), 169--184.

\bibitem{ACo} D. D. Anderson, J.S. Cook, {\em Two star-operations and their induced lattices}, Comm. Algebra, {\bf 28},
(2000), 2461--2475.

\bibitem{AFZ} D.F. Anderson, M. Fontana, and M. Zafrullah, {\em Some remarks on Pr\"{u}fer
$\star$-multiplication domains and class groups}, J. Algebra, {\bf
319}, (2008), 272--295.

\bibitem{AB} J. T. Arnold and J. W. Brewer, {\em Kronecker function rings and flat
$D[X]$-modules}, Proc. Amer. Math. Soc. {\bf 27}, (1971), 483--485.

\bibitem{Ch} G. W. Chang, {\em Pr\"{u}fer $*$-multiplication domains, Nagata rings, and Kronecker function rings}, J. Algebra, {\bf 319}, (2008), 309--319.

\bibitem{CF1} G.W. Chang and M. Fontana, {\em Uppers to zero and semistar operations in polynomial rings},
J. Algebra, {\bf 318}, (2007), 484--493.

\bibitem{CKL} G. W. Chang, B. G. Kang, J. W. Lim, {\em Pr\"{u}fer $v$-multiplication domains and related domains of the form $D+D_S[\Gamma^*]$} , J. Algebra, {\bf 323}, (2010), 3124--3133.

\bibitem{Chev} C. C. Chevalley, {\em La notion d'anneau de d\'ecomposition}, Nagoya
Math. J. {\bf 7}, (1954), 21--33.

\bibitem{D} E. Davis, {\em Overrings of commutative rings, II},
Trans. Amer. Math. Soc., {\bf 110}, (1964), 196--212.

\bibitem{DHLZ} D. E. Dobbs, E. G. Houston, T. G. Lucas and M. Zafrullah, {\em t-linked
overrings and Pr\"ufer v-multiplication domains}, Comm. Algebra {\bf
17} (1989), 2835--2852.

\bibitem{DS} D. E. Dobbs, and P. Sahandi, {\em On semistar Nagata rings, Pr\"{u}fer-like domains and semistar going-down domains}, Houston J. Math. {\bf
37}, No. 3 (2011), 715--731.

\bibitem{EF} S. El Baghdadi and M. Fontana, {\em Semistar linkedness and
flatness, Pr\"{u}fer semistar multiplication domains}, Comm. Algebra
{\bf 32} (2004), 1101--1126.

\bibitem{FH} M. Fontana and J. A. Huckaba, {\em Localizing systems and semistar
operations}, in: S. Chapman and S. Glaz (Eds.), Non Noetherian
Commutative Ring Theory, Kluwer, Dordrecht, 2000, 169--197.

\bibitem{FJS} M. Fontana, P. Jara and E. Santos, {\em Pr\"{u}fer
$\star$-multiplication domains and semistar operations}, J. Algebra
Appl. {\bf 2} (2003), 21--50.

\bibitem{FL} M. Fontana and K. A. Loper, {\em Nagata rings, Kronecker function
rings and related semistar operations}, Comm. Algebra {\bf 31}
(2003), 4775--4801.

\bibitem{FL1} M. Fontana and K. A. Loper, {\em A Krull-type theorem for semistar
integral closure of an integral domain}, ASJE Theme Issue
``Commutative Algebra'' {\bf 26} (2001), 89--95.

\bibitem{FL2} M. Fontana and K. A. Loper, {\em Kronecker function rings: a general approach},
in: D. D. Anderson and I. J. Papick (Eds.), Ideal Theoretic Methods
in Commutative Algebra, Lecture Notes Pure Appl. Math. {\bf 220}
(2001), Dekker, New York, 189--205.

\bibitem{FL3} M. Fontana and K. A. Loper, {\em A historical overview of Kronecker function rings, Nagata rings, and
related starand semistar operations}, in: J. W. Brewer, S. Glaz, W.
J. Heinzer, B. M. Olberding(Eds.), Multiplicative Ideal Theory in
Commutative Algebra. A Tribute to the Work of Robert Gilmer,
Springer, 2006, 169--187.

\bibitem{FP} M. Fontana and G. Picozza, {\em Semistar invertibility on integral domains}, Algebra Colloq. {\bf 12}, No. 4, (2005), 645--664.

\bibitem{G} R. Gilmer, {\em Multiplicative Ideal Theory}, New York, Dekker, 1972.

\bibitem{HZ} E. Houston and M. Zafrullah, {\em On $t$-invertibility, II}, Comm. Algebra
{\bf 17}, (1989), 1955--1969.

\bibitem{Jaf} P. Jaffard, {\em Les Syst\`{e}mes d'Id\'{e}aux}, Dunod, Paris, 1960.

\bibitem{Kang} B. G. Kang, {\em Pr\"{u}fer $v$-multiplication domains and the ring $R[X]_{N_v}$}, J. Algebra, {\bf 123}, (1989), 151--170.

\bibitem{K} I. Kaplansky, {\em Commutative Rings}, revised ed., Univ. Chicago Press, Chicago, 1974.

\bibitem{Na} M. Nagata, {\em Local Rings}, Wiley-Interscience, New York, 1962.

\bibitem{North} D. G. Northcott, {\em Lessons on ringss, modules, and multiplicities}, Cambridge Univ. Press, Cambridge, 1968.

\bibitem{OM} A. Okabe and R. Matsuda, {\em Semistar-operations on integral
domains}, Math. J. Toyama Univ. {\bf 17} (1994), 1--21.

\bibitem{P} M. H. Park, {\em Integral closure of a graded integral domain}, Comm. Algebra, {\bf 35}, (2007), 3965--3978.

\bibitem{R} F. Richman, {\em Generalized quotient rings}, Proc. Amer. Math. Soc. {\bf
16}, (1965), 794--799.

\end{thebibliography}
\end{document}